\documentclass[reqno]{amsart}
\usepackage{enumerate}
\usepackage{tabto}
\usepackage[mathscr]{euscript}
\usepackage{xcolor}
\usepackage{layout}
\usepackage{fancyhdr}
\usepackage{array}
\usepackage{amsfonts}
\usepackage{amsmath}
\usepackage{amssymb}
\usepackage{mathtools}
\usepackage{graphicx}
\usepackage{bm}
\usepackage{enumitem}
\usepackage{caption} 
\usepackage{color}
\usepackage{kotex}
\usepackage{csquotes}
\usepackage{bookmark}
\usepackage{float}
\usepackage{multirow}
\usepackage[square,numbers,sort&compress]{natbib}
\usepackage{hyperref}
\hypersetup{colorlinks=true,linkcolor=blue,citecolor=red}
\allowdisplaybreaks

\def\Xint#1{\mathchoice
{\XXint\displaystyle\textstyle{#1}}%
{\XXint\textstyle\scriptstyle{#1}}%
{\XXint\scriptstyle\scriptscriptstyle{#1}}%
{\XXint\scriptscriptstyle\scriptscriptstyle{#1}}%
\!\int}
\def\XXint#1#2#3{{\setbox0=\hbox{$#1{#2#3}{\int}$ }
\vcenter{\hbox{$#2#3$ }}\kern-.6\wd0}}

\def\dashint{\Xint-}

\newtheorem{theorem}{Theorem}[section]
\newtheorem{lemma}[theorem]{Lemma}

\theoremstyle{definition}
\newtheorem{definition}[theorem]{Definition}

\numberwithin{equation}{section}

\newcommand{ \mr }{ \mathbb{R} }

\newcommand{\iints}[1]{{\int\hspace{-0.28cm}\int_{#1}}}
\newcommand{\iintss}{{\int\hspace{-0.28cm}\int}}
\newcommand{ \miints }{{\iintss\hspace{-0.56cm} -\hspace{-0.15cm}-}}
\newcommand{\miint}[1]{{\miints_{\hspace{-0.13cm}#1}}}

\begin{document}
\title[Bounded solutions and interpolative gap bounds]{Bounded solutions and interpolative gap bounds for degenerate parabolic double phase problems}

\author{Bogi Kim}\address{Department of Mathematics, Kyungpook National University, Daegu, 41566, Republic of Korea} \email{rlaqhrl4@knu.ac.kr} \author{Jehan Oh}\address{Department of Mathematics, Kyungpook National University, Daegu, 41566, Republic of Korea} \email{jehan.oh@knu.ac.kr}

\subjclass{Primary 35B65; Secondary 35K65, 35K55, 35D30}
\date{\today.}
\keywords{degenerate parabolic equations, double phase problems, gap bound conditions, interpolation, higher integrability}
\thanks{This work is supported by National Research Foundation of Korea (NRF) grant funded by the Korea government [Grant Nos. RS-2023-00217116, RS-2025-00555316, RS-2025-25415411, and RS-2025-25426375].}

\begin{abstract}
We establish gradient higher integrability results for weak solutions to degenerate parabolic equations of double phase type
$$
u_t-\operatorname{div} \left(|Du|^{p-2}Du + a(x,t)|Du|^{q-2}Du\right)=0 
$$
in $\Omega_T \coloneq \Omega\times (0,T)$, where $a(\cdot)\in C^{\alpha,\frac{\alpha}{2}}(\Omega_T)$.
For bounded solutions, we prove that the result holds under the gap condition
$$
q \leq p + \alpha.
$$
Moreover, for solutions with
$$
u\in C(0,T;L^s(\Omega)), \quad s \geq 2,
$$
we obtain higher integrability under the gap condition
$$
q \leq p + \frac{s\alpha}{n+s}.
$$
These results provide an interpolation between the gap bounds in the parabolic double phase setting.
\end{abstract}
\maketitle

\section{\bf Introduction}\label{section 1}
We study parabolic double phase equations of the form
\begin{equation}    \label{eq : main equation}
    u_t - \operatorname{div} \mathcal{A}(z,Du)=0 \qquad \text{in} \ \, \Omega_T\coloneq \Omega\times (0,T),
\end{equation}
where $\Omega \subset \mr^n$ ($n\geq 2$) is a bounded open set and $\mathcal{A}:\Omega_T \times \mr^{n}\rightarrow \mr^{n}$ is a Carath\'{e}odory vector field satisfying the following coercivity and growth bounds: there exist constants $0<\nu\leq L <\infty$ such that
\begin{equation}    \label{cond : double phase bounded condition of integrand}
    \mathcal{A}(z,\xi)\cdot \xi\geq \nu (|\xi|^p+a(z)|\xi|^q)\quad \text{and}\quad |\mathcal{A}(z,\xi)|\leq L(|\xi|^{p-1}+a(z)|\xi|^{q-1})
\end{equation}
for all $z \in \Omega_T$ and $\xi \in \mr^n$, where $2\leq p <q <\infty$.
We note that the corresponding model equation is
\begin{equation}    \label{eq : model equation}
u_t-\operatorname{div} \left(|Du|^{p-2}Du + a(z)|Du|^{q-2}Du\right)=0 \qquad \text{in} \ \, \Omega_T.
\end{equation}

The parabolic equation \eqref{eq : model equation} is the evolutionary counterpart of the elliptic double phase functional
$$
W^{1,1}(\Omega) \ni w \mapsto \mathcal{P}(w,\Omega)\coloneq \int_\Omega \left[\frac{1}{p}|Dw|^p+\frac{1}{q}a(x)|Dw|^q\right] dx,
$$
where $1<p\leq q$ and $0\leq a(\cdot)\in C^{\alpha}(\Omega)$ for some $\alpha\in(0,1]$.
Its Euler-Lagrange equation is given by
$$
-\operatorname{div}(|Du|^{p-2}Du + a(x)|Du|^{q-2}Du)=0 \qquad \text{in} \ \, \Omega,
$$
which is associated with an elliptic double phase model that was originally introduced in \cite{Zhikov1986,Zhikov1993,Zhikov1995,Zhikov1997} as an example exhibiting the Lavrentiev phenomenon and illustrating homogenization in strongly anisotropic materials. Moreover, variants of the double phase problem arise naturally in applied sciences, including transonic flows \cite{Bahrouni2019}, quantum physics \cite{Benci2000}, stationary reaction-diffusion systems \cite{Cherfils2005stationary}, image denoising and processing \cite{Kbiri2014,Charkaoui2024,Chen2006,Harjulehto2013,Harjulehto2021,Fang2010}, and heat diffusion in materials with heterogeneous thermal properties \cite{Arora2023}, and so on. To obtain regularity results for weak solutions of elliptic double phase problems, \cite{Colombo2015a} and \cite{Mingione2021} imposed conditions linking the closeness of $p$ and $q$ with the H\"{o}lder exponent $\alpha$ of the modulating coefficient $a(\cdot)$. In particular, under the gap bound condition  
\begin{equation}\label{cond : gamma=2 gap bound condition in elliptic double phase problem}
\frac{q}{p}\leq 1+\frac{\alpha}{n},
\end{equation}
it has been shown in \cite{Baroni2018,Colombo2015,Esposito2004} that a weak solution $u$ and its gradient $Du$ are H\"{o}lder continuous. Also, when $u$ satisfies the conditions
\begin{equation}\label{cond : gamma=infty gap bound condition in elliptic double phase problem}
u\in L^\infty(\Omega)\quad \text{and}\quad q\leq p +\alpha,
\end{equation}
the same conclusion has been established in \cite{Baroni2018,Colombo2015a}. Moreover, Baroni-Colombo-Mingione \cite{Baroni2018} have established that if 
$$
u\in C^{0,\gamma}(\Omega) \quad \text{and}\quad q\leq p +\frac{\alpha}{1-\gamma}\ \, \text{with}\ \, \gamma\in(0,1),
$$
then the gradient of $u$ is H\"{o}lder continuous. This shows that by imposing stronger regularity assumptions on $u$, one may relax the gap bound condition while still ensuring regularity results for $u$. Indeed, in \cite{Ok2020}, under the assumption
\begin{equation}\label{cond : gamma>1 gap bound condition in elliptic double phase problem}
u\in L^{\gamma}_{\operatorname{loc}}(\Omega)\quad \text{and}\quad q\leq p +\frac{\gamma \alpha}{n+\gamma},
\end{equation}
where $p$ and $\gamma$ satisfy
$$
1<p<n\quad \text{and} \quad \gamma>\frac{np}{n-p},
$$
it was shown that a local quasi-minimizer $u$ of $\mathcal{P}$ is locally H\"{o}lder continuous, which in turn led to an interpolation of the gap bound conditions. Furthermore, under either \eqref{cond : gamma=2 gap bound condition in elliptic double phase problem} or \eqref{cond : gamma=infty gap bound condition in elliptic double phase problem}, various regularity results have also been established. For instance, Baroni-Colombo-Mingione \cite{Baroni2015}, Ok \cite{Ok2017,Ok2020} have obtained Harnack's inequality and H\"{o}lder continuity for weak solutions, while Baasandorj-Byun-Oh \cite{Baasandorj2020}, De Filippis-Mingione \cite{DeFilippis2019} and Colombo-Mingione \cite{Colombo2016} have established Calder\'{o}n-Zygmund type estimates. Furthermore, several regularity results concerning the elliptic double phase problems have been obtained in \cite{Byun2021,Byun2017,Byun2020,Byun2021a,Haestoe2022,Haestoe2022a}.

Next, to study the regularity of weak solutions to parabolic double phase problems, it is assumed in Kim-Kinnunen-Moring \cite{2023_Gradient_Higher_Integrability_for_Degenerate_Parabolic_Double-Phase_Systems} and Kim-Kinnunen-S\"{a}rki\"{o} \cite{Wontae2023a} that the conditions
\begin{equation}\label{cond : s=2 gap bound condition in parabolic double phase problem}
    u\in C(0,T;L^2(\Omega)),\quad a(\cdot) \in C^{\alpha,\frac{\alpha}{2}}(\Omega_T) \quad \text{and}\quad q\leq p + \frac{2\alpha}{n+2}
\end{equation}
hold. Here, $a(\cdot) \in C^{\alpha,\frac{\alpha}{2}}(\Omega_T)$ means that $a(\cdot)\in L^\infty (\Omega_T)$ and there exists a H\"{o}lder constant $[a]_\alpha\coloneq [a]_{\alpha,\frac{\alpha}{2};\Omega_T}>0$ such that
$$
|a(x_1,t_1)-a(x_2,t_2)|\leq [a]_\alpha \max\left\{|x_1-x_2|^\alpha,|t_1-t_2|^\frac{\alpha}{2}\right\}
$$
for all $x_1,\,x_2\in\Omega$ and $t_1,\,t_2\in(0,T)$. Under the assumption \eqref{cond : s=2 gap bound condition in parabolic double phase problem}, they have established the higher integrability results and energy estimates of weak solutions. Also, the existence theory of weak solutions to the above problem is addressed in \cite{Chlebicks2019}, with further discussions in \cite{Wontae2023a,Singer2016}. Kim \cite{Wontae2023b} and Kim-S\"{a}rki\"{o} \cite{Wontae2024} have investigated higher integrability results and Calder\'{o}n-Zygmund type estimates for singular parabolic double phase systems. Furthermore, Buryachenko-Skrypnik \cite{Buryachenko2022} have established local continuity and Harnack's inequality for the parabolic double phase equations, and Kim-Oh \cite{Kim2024}, Kim-Oh-Sen \cite{Kim2025} and Sen \cite{Sen2025} have studied regularity results for the parabolic multi-phase problems.

We now introduce the definition of a weak solution for the parabolic double phase problem.
\begin{definition}
    A function $u:\Omega_T\rightarrow \mr$ with
    $$
    u\in C(0,T;L^2(\Omega))\cap L^q(0,T;W^{1,q}(\Omega))
    $$
    is a weak solution to \eqref{eq : main equation} if
    $$
    \iints{\Omega_T} -u\cdot \varphi_t +\mathcal{A}(z,Du)\cdot D\varphi\, dz=0 
    $$
    for every $\varphi\in C_0^\infty(\Omega_T)$.
\end{definition}

We remark that, by employing the parabolic Lipschitz truncation method as in \cite{Wontae2023a}, one may assume that $u$ belongs to a more natural function space
$$
u\in C(0,T;L^2(\Omega)) \cap L^1(0,T;W^{1,1}(\Omega))
$$
with
$$
\iints{\Omega_T} H(z,|Du|)\, dz=\iints{\Omega_T} (|Du|^p +a(z)|Du|^q)\, dz <\infty,
$$
instead of assuming
$$
    u\in C(0,T;L^2(\Omega))\cap L^q(0,T;W^{1,q}(\Omega)).
$$

In this paper, we first consider bounded weak solutions to \eqref{eq : main equation}. We assume that the non-negative coefficient function $a:\Omega_T\rightarrow \mr^+$ and a weak solution $u$ satisfy
\begin{equation}    \label{cond : main assumption with infty}
    u\in L^\infty(\Omega_T),\quad a(\cdot) \in C^{\alpha,\frac{\alpha}{2}}(\Omega_T)\quad \text{and}\quad q\leq p+\alpha
\end{equation}
for some $\alpha\in(0,1]$. This condition is dimensionless and coincides with the gap bound condition \eqref{cond : gamma=infty gap bound condition in elliptic double phase problem} for bounded solutions to the elliptic double phase problem. Kim-Moring-S\"{a}rki\"{o} \cite{Wontae2025} have established that, under the condition \eqref{cond : main assumption with infty}, weak solutions are locally H\"{o}lder continuous. Assuming  \eqref{cond : main assumption with infty}, we obtain a higher integrability result.
In what follows, $Q_r(z_0)\coloneq B_r(x_0)\times (t_0-r^2,t_0+r^2)$ denotes a parabolic cylinder, where $z_0=(x_0,t_0)$.
For simplicity, we write the collection of parameters as
$$
\operatorname{data}_b \coloneq (n,p,q,\alpha,\nu,L,\operatorname{diam}(\Omega),[a]_\alpha,\|u\|_{L^\infty(\Omega_T)}),
$$
and we denote $H(z,\varkappa) \coloneq \varkappa^p +a(z)\varkappa^q$ for $\varkappa\geq 0$ and $z\in\Omega_T$.
\begin{theorem}\label{thm : main theorem with infty}
    Assume that \eqref{cond : main assumption with infty} is satisfied and let $u$ be a weak solution to \eqref{eq : main equation} with \eqref{cond : double phase bounded condition of integrand}. Then there exist constants $\varepsilon_0=\varepsilon_0(\operatorname{data}_b)>0$ and $c=c(\operatorname{data}_b,$ $\|a\|_{L^\infty(\Omega_T)})>1$ such that
    $$
    \begin{aligned}
        &\miint{Q_r(z_0)} H(z,|Du|)^{1+\varepsilon}\, dz\leq c\left(\miint{Q_{2r}(z_0)} H(z,|Du|)\,dz\right)^{1+\frac{\varepsilon q}{2}}+c
    \end{aligned}
    $$
    for every $Q_{2r}(z_0)\subset \Omega_T$ and $\varepsilon\in (0,\varepsilon_0)$.
\end{theorem}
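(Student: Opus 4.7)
The plan is to carry out a Gehring-type argument on intrinsic parabolic cylinders adapted to the double phase operator, in the spirit of \cite{2023_Gradient_Higher_Integrability_for_Degenerate_Parabolic_Double-Phase_Systems, Wontae2023a}, with modifications that exploit the $L^\infty$-bound on $u$ in order to relax the gap condition from $q\leq p+\frac{2\alpha}{n+2}$ to the dimensionless $q\leq p+\alpha$. Fix $Q_{2r}(z_0)\subset\Omega_T$ and choose a threshold $\lambda_0\geq 1$ comparable to $\miint{Q_{2r}(z_0)} H(z,|Du|)\,dz$ plus an absolute constant. For each $\lambda>\lambda_0$, I would perform the standard stopping-time construction on the parabolic super-level set $\{H(z,|Du|)>\lambda^p\}$ using intrinsic cylinders of the form
$$
Q_\rho^\lambda(y) = B_\rho(y)\times\bigl(s-\lambda^{2-p}\rho^2,\,s+\lambda^{2-p}\rho^2\bigr),
$$
so that on each stopping cylinder the mean of $H(z,|Du|)$ is comparable to $\lambda^p$ while being strictly smaller on every larger concentric one.

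On each stopping cylinder I would split into two regimes depending on the size of $a_{\inf}\lambda^{q-p}$, where $a_{\inf}:=\inf_{Q_\rho^\lambda(y)} a$. In the $p$-intrinsic regime ($a_{\inf}\lambda^{q-p}$ small), the equation essentially behaves like the $p$-Laplacian and the $q$-contribution is a harmless perturbation. In the $(p,q)$-intrinsic regime, I would freeze the coefficient at $a_{\inf}$ and estimate $a(z)-a_{\inf}\leq [a]_\alpha\rho^\alpha$ across the cylinder. In both regimes the Caccioppoli inequality adapted from \cite{Wontae2023a} combined with the parabolic Sobolev--Poincar\'e inequality yields a bound of the schematic form
$$
\miint{Q_{\rho/2}^\lambda(y)} H(z,|Du|)\,dz \;\leq\; c\left(\miint{Q_\rho^\lambda(y)} H(z,|Du|)^{\theta}\,dz\right)^{1/\theta} + (\text{perturbation})
$$
for some $\theta<1$, which upon absorption of the perturbation becomes the reverse H\"older inequality needed for Gehring's lemma on intrinsic cylinders.

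The crucial step, and the main obstacle, is controlling the perturbation in the $(p,q)$-intrinsic regime. The offending term $[a]_\alpha\rho^\alpha|Du|^q$ is written as $[a]_\alpha\rho^\alpha|Du|^{q-p}\cdot|Du|^p$, and the intrinsic bound $\miint H(z,|Du|)\lesssim\lambda^p$ allows one to replace $|Du|^{q-p}$ on average by $\lambda^{q-p}$. The gap condition $q\leq p+\alpha$ together with the intrinsic time scaling $\lambda^{2-p}\rho^2$ then forces $\rho^\alpha\lambda^{q-p}\lesssim 1$, provided one also controls $\rho^{-1}|u-(u)_{Q_\rho^\lambda(y)}|$ that appears when the Sobolev--Poincar\'e step produces the zero-order counterpart $a(z)|u-(u)|^q/\rho^q$. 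This is exactly where boundedness enters: the pointwise estimate $|u-(u)_{Q_\rho^\lambda(y)}|\leq 2\|u\|_{L^\infty(\Omega_T)}$ dispenses with the dimensional Sobolev loss used in the $L^2$ setting of \cite{Wontae2023a, 2023_Gradient_Higher_Integrability_for_Degenerate_Parabolic_Double-Phase_Systems}, where the factor $2/(n+2)$ arose precisely from balancing the time-slice $L^2$-norm against the spatial Sobolev exponent. Replacing that step by the trivial $L^\infty$-estimate removes the $n$-dependence from the gap and produces the dimensionless condition $q\leq p+\alpha$, while also generating the additive constant $c$ in the conclusion.

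Once the reverse H\"older inequality holds on every stopping cylinder with constants depending only on $\operatorname{data}_b$ and $\|a\|_{L^\infty(\Omega_T)}$, a Vitali covering combined with a Gehring-type iteration on intrinsic cylinders yields the stated higher integrability. The exponent $1+\frac{\varepsilon q}{2}$ on the right-hand side is the standard artifact of translating from intrinsic cylinders back to parabolic cylinders $Q_r(z_0)$: the time-scaling $\lambda^{2-p}$ inflates the measure by $\lambda^{2-p}$, and re-expressing $\lambda^{\varepsilon q}$ in terms of the average $\miint H$ produces precisely the exponent $1+\frac{\varepsilon q}{2}$ after the covering argument on cylinders built from the $p$-phase geometry.
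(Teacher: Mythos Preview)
Your outline captures the overall Gehring-on-intrinsic-cylinders strategy and correctly singles out the pointwise estimate $|u-(u)_{Q}|\le 2\|u\|_{L^\infty}$ as the device that removes the dimensional loss in the zero-order Caccioppoli term $a(z)|u-(u)|^q/\rho^q$; this is indeed how the paper exploits boundedness in Lemmas~\ref{lem : estimation of S in p-intrinsic cylinder} and~\ref{lem : estimation of difference of u and mean u in p intrinsic cylinder with s=infty}. However, the proposal has a genuine gap at the step you yourself flag as ``crucial''.

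You assert that the gap condition $q\le p+\alpha$ together with the intrinsic time scaling $\lambda^{2-p}\rho^2$ forces $\rho^\alpha\lambda^{q-p}\lesssim 1$. This is false: the time height of the cylinder imposes no relation whatsoever between $\rho$ and $\lambda$. Under $q\le p+\alpha$, the bound $\rho^\alpha\lambda^{q-p}\lesssim 1$ amounts to $\rho\lesssim\lambda^{-1}$, and proving this is precisely the content of Lemma~\ref{lem : no occurence with s=infty}, which requires a full pass through the Caccioppoli inequality with the $L^\infty$ bound applied term by term. Without that lemma your two-regime split based on $a_{\inf}\lambda^{q-p}$ cannot close: there is a third scenario --- $\sup a\cdot\lambda^{q-p}$ large while simultaneously $\sup a\lesssim [a]_\alpha\rho^\alpha$ --- in which the $q$-contribution is neither a harmless perturbation of the $p$-Laplacian nor controllable by freezing at $a_{\inf}$. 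The paper isolates this by a three-way split based on $\sup a$ (conditions \eqref{cond : p-phase condition}, \eqref{cond : p,q-phase condition}, and case~\eqref{case : no occurence}) and uses $\rho\le c_b\lambda^{-1}$ to show the third scenario is vacuous once $K$ in \eqref{def : definition of K and kappa} is chosen large enough.

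Two smaller issues: first, you describe the offending term as $[a]_\alpha\rho^\alpha|Du|^q$, but after Caccioppoli the gradient sits on the left and the term to be controlled is $[a]_\alpha\rho^\alpha|u-(u)|^q/\rho^q$; your ``replace $|Du|^{q-p}$ by $\lambda^{q-p}$ on average'' step is aimed at the wrong quantity and in any case does not follow from $\miint H\lesssim\lambda^p$. Second, the paper does not work solely on $p$-intrinsic cylinders: in the $(p,q)$-phase it passes to cylinders $G_\rho^\lambda(z_0)$ with time scaling $\lambda^2/H_{z_0}(\lambda)$, and the Vitali covering in Subsection~\ref{subsection 6.1} must reconcile both geometries.
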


Next, we further consider the case in which an interpolation between \eqref{cond : s=2 gap bound condition in parabolic double phase problem} and \eqref{cond : main assumption with infty} is given by the following conditions:
\begin{equation}    \label{cond : main assumption with s}
u\in C(0,T;L^s(\Omega)),\quad a(\cdot)\in C^{\alpha,\frac{\alpha}{2}}(\Omega_T)\quad \text{and}\quad q\leq p+\frac{s\alpha}{n+s}
\end{equation}
for some $s\in [2,\infty)$ and for some $\alpha\in(0,1]$.  Our final goal is to prove the gradient higher integrability result for a weak solution $u$ to \eqref{eq : main equation} under the assumption \eqref{cond : main assumption with s}. For this, we write
$$
\operatorname{data}_s \coloneq (n,p,q,s,\alpha,\nu,L,\operatorname{diam}(\Omega),[a]_\alpha,\|u\|_{C(0,T;L^s(\Omega))}).
$$
\begin{theorem}\label{thm : main theorem with s}
    Assume that \eqref{cond : main assumption with s} is satisfied and let $u$ be a weak solution to \eqref{eq : main equation} with \eqref{cond : double phase bounded condition of integrand}. Then there exist constants $\varepsilon_0=\varepsilon_0(\operatorname{data}_s)>0$ and $c=c(\operatorname{data}_s,$ $\|a\|_{L^\infty(\Omega_T)})>1$ such that
    $$
    \begin{aligned}
        &\miint{Q_r(z_0)} H(z,|Du|)^{1+\varepsilon}\, dz\leq c\left(\miint{Q_{2r}(z_0)} H(z,|Du|)\,dz\right)^{1+\frac{\varepsilon q}{2}}+c
    \end{aligned}
    $$
    for every $Q_{2r}(z_0)\subset \Omega_T$ and $\varepsilon\in (0,\varepsilon_0)$.
\end{theorem}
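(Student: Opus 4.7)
The proof follows the same higher integrability scheme used for Theorem~\ref{thm : main theorem with infty} and provides an interpolation between that result (formally $s=\infty$) and the earlier result of Kim-Kinnunen-Moring \cite{2023_Gradient_Higher_Integrability_for_Degenerate_Parabolic_Double-Phase_Systems} corresponding to $s=2$. The overall plan is to establish a reverse H\"older inequality for $H(z,|Du|)$ on suitable intrinsic parabolic cylinders, then apply a parabolic Vitali covering and stopping-time argument together with a Gehring-type lemma to deduce the $L^{1+\varepsilon}$-estimate.

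The first step is a Caccioppoli inequality on intrinsic cylinders: using a Steklov-averaged test function built from $\eta^{q}(u - (u)_Q)$ with a suitable space-time cutoff, one obtains an estimate of the form
$$
\sup_{t \in I_\rho} \dashint_{B_\rho(x_0)} \frac{|u - (u)|^2}{\rho^2}\, dx + \miint_{Q_\rho} H(z,|Du|)\, dz \leq c \miint_{Q_{2\rho}} \left[ H\!\left( z, \frac{|u - (u)|}{\rho} \right) + 1 \right] dz.
$$
The structural change from Theorem~\ref{thm : main theorem with infty} occurs in the subsequent Sobolev-Poincar\'e step, where in place of an $L^\infty$-truncation one invokes a parabolic Gagliardo-Nirenberg inequality of the form
$$
\int_Q |v|^{p(n+s)/n}\, dz \leq c \left( \sup_{t \in I} \int_B |v|^{s}\, dx \right)^{p/n} \int_Q |Dv|^p\, dz
$$
applied to $v = u - (u)_Q$, using the continuity in $L^s$ to control the time-supremum term. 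The exponent $1 + s/n$ interpolates between the $L^2$-Sobolev exponent and the $L^\infty$ scaling, and it is this exponent that dictates the precise intrinsic geometry needed in the ensuing reverse H\"older inequality.

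The critical step, and the place where the gap bound $q \leq p + s\alpha/(n+s)$ enters, is the control of the $q$-phase error term $\miint a(z)|Du|^q\, dz$. On the intrinsic cylinder of spatial radius $\rho$ and intrinsic parameter $\lambda \geq 1$, the H\"older continuity of $a$ yields $\operatorname{osc}_{Q_\rho} a \leq c[a]_\alpha \rho^\alpha$, while the intrinsic scaling forced by the Sobolev exponent $1 + s/n$ together with the requirement $\miint H \sim \lambda^p$ gives a balance of the form $\rho^\alpha \lambda^{q-p} \lesssim 1$, which holds precisely when $q - p \leq s\alpha/(n+s)$. This is the hypothesis of the theorem and is exactly what is needed to absorb the $q$-phase error into the leading $p$-phase energy. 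The main obstacle is the unified treatment of the $p$-intrinsic cylinders $Q_\rho^\lambda = B_\rho(x_0) \times (t_0 - \lambda^{2-p}\rho^2, t_0 + \lambda^{2-p}\rho^2)$ and the $(p,q)$-intrinsic cylinders through a single stopping-time construction, together with verification of the Vitali covering property and careful bookkeeping near the critical threshold where $a(z_0)\lambda^{q-p}$ transitions between small and large; once the reverse H\"older inequality is established uniformly in both regimes, the Gehring-type lemma furnishes the desired $\varepsilon_0 = \varepsilon_0(\operatorname{data}_s) > 0$ and the claimed estimate.
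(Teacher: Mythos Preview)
Your overall architecture is correct and matches the paper's: Caccioppoli estimate, stopping-time construction of $p$- and $(p,q)$-intrinsic cylinders, parabolic Sobolev--Poincar\'e inequalities, reverse H\"older, Vitali covering, and a Gehring/Fubini argument to upgrade integrability. However, there is a genuine gap in your treatment of where the bound $q\le p+\frac{s\alpha}{n+s}$ actually enters.

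You assert that the intrinsic scaling together with $\miint H\sim\lambda^p$ forces a balance $\rho^\alpha\lambda^{q-p}\lesssim 1$. This is not a consequence of the intrinsic geometry: the $p$-intrinsic cylinder has time-length $\lambda^{2-p}\rho^2$, and the stopping condition $\miint_{Q_\rho^\lambda}H=\lambda^p$ by itself yields no upper bound on $\rho$ in terms of $\lambda$ beyond the trivial comparison \eqref{cond : relation of lambda_xi and lambda_0}. In the paper this relation is obtained only after substantial work: one must prove (Lemma~\ref{lem : no occurence with s<infty}) that under the auxiliary hypothesis $\sup_{Q_{10\varrho_w}}a\le 2[a]_\alpha(10\varrho_w)^\alpha$ one has $\varrho_w\le c_s\lambda_w^{-s/(n+s)}$, by feeding the Caccioppoli inequality back into itself and applying the Gagliardo--Nirenberg multiplicative embedding slice-wise, with a case analysis according to whether $p$ and $q$ lie above or below $s$. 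Only then can the third stopping alternative ($K\lambda^p\le\sup a\cdot\lambda^q$ with $\sup a\lesssim\rho^\alpha$) be ruled out, and this exclusion is exactly what makes the two-regime dichotomy exhaustive. The paper explicitly flags this as the point where the argument departs from the $s=2$ case of \cite{2023_Gradient_Higher_Integrability_for_Degenerate_Parabolic_Double-Phase_Systems}; your sketch treats it as automatic.

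A second place your outline is too loose is the reverse H\"older step on $p$-intrinsic cylinders (Lemma~\ref{lem : estimation of difference of u and mean u in p intrinsic cylinder with s<infty}). The paper does not use the particular parabolic interpolation inequality you write; it uses the spatial Gagliardo--Nirenberg inequality (Lemma~\ref{lem : Gagliardo-Nirenberg inequality}) with carefully chosen exponents and must split into $s\in[2,4]$ (where $p_3=2$ works directly) and $s\in(4,\infty)$ (where an intermediate exponent $\tilde p\in(s-1,s)$ and an $L^{\tilde p}$-interpolation between $L^2$ and $L^s$ are required). Without this split the exponents do not close, and the inequality you propose, $\int|v|^{p(n+s)/n}\le c\bigl(\sup_t\int|v|^s\bigr)^{p/n}\int|Dv|^p$, is not the device that makes the estimate go through.
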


We remark that $\frac{s\alpha}{n+s}= \frac{2\alpha}{n+2}$ when $s= 2$ and that $\frac{s\alpha}{n+s}\nearrow \alpha$ as $s\rightarrow \infty$. This means that Theorem \ref{thm : main theorem with s} yields the interpolation of gap bound conditions in Theorem \ref{thm : main theorem with infty} and \cite[Theorem 1.1]{2023_Gradient_Higher_Integrability_for_Degenerate_Parabolic_Double-Phase_Systems}.
Moreover, this result provides a justification for the naturalness of the bound established in the previous study \cite{2023_Gradient_Higher_Integrability_for_Degenerate_Parabolic_Double-Phase_Systems}.
On the other hand, we impose a strong assumption on $u$, related to the function space $C(0,T;L^2(\Omega))$ and hence to the time derivative of $u$. Unlike \cite{Ok2020}, this condition concerns the time variable rather than the spatial one, which distinguishes the parabolic case from the elliptic one.

However, the modified condition causes difficulties throughout the proof. In contrast to \cite{2023_Gradient_Higher_Integrability_for_Degenerate_Parabolic_Double-Phase_Systems}, we distinguish the $p$-intrinsic and $(p,q)$-intrinsic cases by imposing
$$
K\lambda^p\geq \sup a(\cdot) \lambda^q \quad \text{and}\quad K\lambda^p \leq \sup a(\cdot) \lambda^q,
$$
respectively, for some $K>1$. These conditions simplify the proof of the parabolic Sobolev-Poincar\'{e} type inequalities in Section \ref{section 4}. We shall consider the following three cases:
\begin{enumerate}
    \item $\displaystyle K\lambda^p\geq \sup_{Q_{10\rho}(z)} a(\cdot)\lambda^q$,
    \item $\displaystyle K\lambda^p\leq \sup_{Q_{10\rho}(z)} a(\cdot)\lambda^q\quad$and$\quad\displaystyle \sup_{Q_{10\rho}(z)} a(\cdot)\gtrsim  \rho^\alpha$,
    \item $\displaystyle K\lambda^p\leq \sup_{Q_{10\rho}(z)} a(\cdot)\lambda^q\quad$and$\quad\displaystyle \sup_{Q_{10\rho}(z)} a(\cdot)\lesssim \rho^\alpha$.
\end{enumerate}
Here, $\rho$ denotes the radius of the $p$-intrinsic cylinder that is defined in the stopping time argument in Section \ref{section 3}. The first and second cases correspond to the $p$- and $(p,q)$-intrinsic cylinders defined in Section \ref{section 2}. In Section \ref{section 3}, employing a stopping time argument, we study the properties regarding $p$- and $(p,q)$-intrinsic cylinders. Also, we must exclude the third case. However, under the modified assumption, it was not possible to prove this by the same method as in \cite{2023_Gradient_Higher_Integrability_for_Degenerate_Parabolic_Double-Phase_Systems}. Therefore, an additional relation between $\lambda$ and $\rho$ was needed, see Lemmas \ref{lem : no occurence with s=infty} and \ref{lem : no occurence with s<infty}. In Section \ref{section 5}, we prove the reverse H\"{o}lder inequalities for each type of intrinsic cylinder. In particular, for the $p$-intrinsic cylinder, the proof is established by dividing the cases $s=\infty$, $s\in[2,4]$ and $s\in(4,\infty)$. Finally, using the Vitali covering lemma derived in Subsection \ref{subsection 6.1} and Fubini's theorem, we complete the proof of Theorems \ref{thm : main theorem with infty} and \ref{thm : main theorem with s} in Subsection \ref{subsection 6.2}. 

\section{\bf Preliminaries}\label{section 2}
We first set up notation.
For a fixed point $z_0 \in \Omega_T$, we denote
\begin{equation}\label{def : definition of H with a fixed center z_0}
    H_{z_0}(\varkappa)\coloneq \varkappa^p+a(z_0)\varkappa^q \qquad \text{for } \varkappa\geq 0.
\end{equation}
We set intrinsic cylinders
\begin{equation}\label{eq : definition of p-intrinsic cylinder}
Q_\rho^\lambda(z_0)\coloneq  B_\rho(x_0)\times(t_0-\lambda^{2-p}\rho^2,t_0+\lambda^{2-p}\rho^2)\eqcolon B_\rho(x_0)\times I_{\rho}^\lambda(t_0)
\end{equation}
and
\begin{equation}\label{eq : definition of p,q-intrinsic cylinder}
G_\rho^\lambda(z_0)\coloneq  B_\rho(x_0)\times\left(t_0-\frac{\lambda^2}{H_{z_0}(\lambda)}\rho^2,t_0+\frac{\lambda^2}{H_{z_0}(\lambda)}\rho^2\right)\eqcolon B_\rho(x_0)\times J_\rho^\lambda(t_0).
\end{equation}
For convenience, we write 
\begin{equation*}
    \operatorname{data}=\begin{cases}
        \operatorname{data}_b &\text{if \eqref{cond : main assumption with infty} holds,}\\
        \operatorname{data}_s &\text{if \eqref{cond : main assumption with s} holds}.
    \end{cases}
\end{equation*}
Next, we denote the super-level set by
\begin{equation}\label{def : definition of Psi}
    \Psi(\Lambda)\coloneq \{z\in\Omega_T:H(z,|Du(z)|)>\Lambda\}.
\end{equation}

For any measurable set $E\subset \Omega_T$ with $0<|E|<\infty$ and integrable function $f\in L^1(\Omega_T)$, we denote the integral average of $f$ over $E$ by
$$
f_E=\frac{1}{|E|}\iints{E} f\, dz=\miint{E} f \, dz,
$$
where $|E|$ means $(n+1)$-dimensional Lebesgue measure of the set $E$.

To prove the main theorems, we refer to three energy lemmas from \cite{2023_Gradient_Higher_Integrability_for_Degenerate_Parabolic_Double-Phase_Systems}. The following lemma provides a Caccioppoli type inequality.
\begin{lemma}[\cite{2023_Gradient_Higher_Integrability_for_Degenerate_Parabolic_Double-Phase_Systems}, Lemma 2.3]\label{lem : Caccioppoli inequality}
    Let $u$ be a weak solution to \eqref{eq : main equation}. Then there exists a positive constant $c=c(n,p,q,\nu,L)$ such that
    $$
    \begin{aligned}
        &\sup_{t\in (t_0-\tau,t_0+\tau)}\dashint_{B_r(x_0)} \frac{|u-u_{Q_{r,\tau}(z_0)}|^2}{\tau}\, dx+\miint{Q_{r,\tau}(z_0)} H(z,|Du|)\, dz\\
        &\quad \leq c \miint{Q_{R, \ell}\left(z_0\right)}\left[\frac{\left|u-u_{Q_{R, \ell}\left(z_0\right)}\right|^p}{(R-r)^p}+a(z) \frac{\left|u-u_{Q_{R, \ell}\left(z_0\right)}\right|^q}{(R-r)^q}\right] dz \\
        &\qquad+c \miint{Q_{R, \ell}\left(z_0\right)} \frac{\left|u-u_{Q_{R, \ell}\left(z_0\right)}\right|^2}{\ell-\tau} \, dz
    \end{aligned}
    $$
    for every $Q_{R, \ell}\left(z_0\right)=B_R(x_0) \times(t_0-\ell, t_0+\ell) \subset \Omega_T$, with $R, \ell>0,\, r \in[R / 2, R)$ and $\tau \in [\ell / 2^2, \ell )$.
\end{lemma}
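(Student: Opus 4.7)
The plan is to run the standard parabolic Caccioppoli argument adapted to the double phase structure, testing the weak formulation with a suitable cut-off times $u-u_{Q_{R,\ell}(z_0)}$ and handling the time derivative via Steklov averaging (or Lipschitz truncation of the characteristic function of a sub-interval in time, to produce the $\sup_t$ on the left).

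Let me abbreviate $u_0 := u_{Q_{R,\ell}(z_0)}$ and choose cut-off functions $\eta \in C_0^\infty(B_R(x_0))$ with $\eta \equiv 1$ on $B_r(x_0)$, $|D\eta| \lesssim 1/(R-r)$, and $\zeta \in C^\infty(\mathbb{R})$ with $\zeta \equiv 1$ on $(t_0-\tau, t_0+\tau)$, $\zeta \equiv 0$ outside $(t_0-\ell, t_0+\ell)$, $|\zeta_t| \lesssim 1/(\ell-\tau)$. After replacing $u$ by its Steklov average $[u]_h$, I would test the weak formulation with $\varphi = [u-u_0]_h \,\eta^q\, \zeta^2$, and let $h \to 0$. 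The diffusion term, after expanding $D\varphi$, produces
$$
\iintss \mathcal{A}(z,Du)\cdot Du\, \eta^q \zeta^2\, dz + q \iintss \mathcal{A}(z,Du)\cdot D\eta\, (u-u_0)\, \eta^{q-1}\zeta^2\, dz,
$$
and the coercivity in \eqref{cond : double phase bounded condition of integrand} bounds the first integrand below by $\nu H(z,|Du|)\eta^q\zeta^2$. The second integral is controlled by the growth bound and Young's inequality applied separately to the $p$- and $q$-pieces (with exponents $p/(p-1), p$ for the first piece and $q/(q-1), q$ for the second), absorbing $\varepsilon \eta^q H(z,|Du|)$ into the left-hand side and leaving behind the right-hand side terms $|u-u_0|^p/(R-r)^p$ and $a(z)|u-u_0|^q/(R-r)^q$; note that $\eta^{q-p}\leq 1$ makes the $p$-absorption legitimate, which is the first small point requiring care.

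For the time derivative, after passing to the limit in the Steklov averages, the term $\iintss -u\,\partial_t((u-u_0)\eta^q\zeta^2)\,dz$ splits into $\tfrac12 \partial_t(u-u_0)^2\,\eta^q\zeta^2$ plus $(u-u_0)^2\eta^q \zeta \zeta_t$. To generate the supremum-in-time estimate, I would, before integrating, replace $\zeta^2$ by $\zeta^2 \chi_{(t_0-\ell,\,t_1)}$ for an arbitrary $t_1 \in (t_0-\tau,t_0+\tau)$ (approximating the characteristic function by Lipschitz cut-offs), integrate the first piece by parts to obtain $\tfrac12 \int_{B_R}(u(x,t_1)-u_0)^2 \eta^q\,dx$ on the left, divide by $\tau$, take the supremum over $t_1$, and control the $\zeta_t$ piece by $|u-u_0|^2/(\ell-\tau)$ as claimed. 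Adding the two bounds and using the arbitrariness of the absorption constants yields both the $\sup_t \dashint_{B_r}|u-u_{Q_{r,\tau}}|^2/\tau$ piece (after replacing $u_0=u_{Q_{R,\ell}}$ by $u_{Q_{r,\tau}}$ via the trivial inequality $\int |u-u_{Q_{r,\tau}}|^2 \leq \int |u-u_0|^2$ on the smaller cylinder) and the energy estimate on $|Du|$.

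The main obstacle I expect is twofold: first, justifying the test function rigorously in the absence of a time derivative for $u$, which is exactly what Steklov averaging (or a comparable mollification that respects the double-phase function space) is designed for, but care is needed so that the limit $h \to 0$ commutes with the $L^q$-integrability of $a(z)|Du|^{q-1}$ even though a priori we only have $u \in L^1(0,T;W^{1,1})$ with $H(\cdot,|Du|) \in L^1$; second, the absorption step in the $q$-term coming from $D\eta$ has to be performed with the weight $a(z)$, and one must verify that the factor $a(z)$ survives the Young's inequality splitting without needing a spatial modulus-of-continuity assumption on $a(\cdot)$ at this stage — this works precisely because the test function is multiplied pointwise and no freezing of $a(\cdot)$ is required. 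Everything else is a routine combination of the standard $p$-Laplacian Caccioppoli inequality with its $q$-counterpart, glued together by the coercivity and growth conditions.
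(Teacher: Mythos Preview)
Your outline is the standard parabolic Caccioppoli argument for double phase operators and is correct in substance; the paper itself does not supply a proof but merely cites \cite[Lemma 2.3]{2023_Gradient_Higher_Integrability_for_Degenerate_Parabolic_Double-Phase_Systems}, where exactly this argument is carried out. One small inaccuracy: the ``trivial inequality'' you invoke to pass from $u_{Q_{R,\ell}}$ to $u_{Q_{r,\tau}}$ in the $\sup_t$ term is only the minimizing property of the mean for the full space--time integral, not for a single time slice, so you need instead the triangle inequality
\[
\int_{B_r}\!\big|u(\cdot,t_1)-u_{Q_{r,\tau}}\big|^2\,dx \le 2\!\int_{B_r}\!\big|u(\cdot,t_1)-u_{Q_{R,\ell}}\big|^2\,dx
+ 2|B_r|\,\miint{Q_{r,\tau}}\big|u-u_{Q_{R,\ell}}\big|^2\,dz,
\]
the last term being absorbed into the right-hand side via $\tfrac{1}{\tau}\le \tfrac{4}{\ell}\le \tfrac{c}{\ell-\tau}$ (using $\tau\ge \ell/4$).
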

The next lemma is a gluing lemma. For this, we denote the spatial integral average of $u$ over $B_R(x_0)$ by
$$
(u)_{B_R(x_0)}(t)=\dashint_{B_R(x_0)} u(x,t)\, dx.
$$
\begin{lemma}[\cite{2023_Gradient_Higher_Integrability_for_Degenerate_Parabolic_Double-Phase_Systems}, Lemma 2.4]
    Let $u$ be a weak solution to \eqref{eq : main equation}, and let $\eta \in C_0^{\infty}(B_R(x_0))$ be a function such that
    $$
    \eta \geq 0, \quad \dashint_{B_R(x_0)} \eta \, dx=1 \quad \text { and } \quad \|\eta\|_{L^{\infty}}+R\|D \eta\|_{L^{\infty}} \leq c(n).
    $$
    Then there exists a positive constant $c=c(n, L)$ such that
\begin{multline*}
    \sup_{t_1, t_2 \in (t_0-\ell, t_0+\ell)} |(u \eta)_{B_R(x_0)}(t_2)-(u \eta)_{B_R(x_0)}(t_1)| \\
    \leq c \frac{\ell}{R} \miint{Q_{R, \ell}(z_0)}\left[|D u|^{p-1}+a(z)|D u|^{q-1}\right] dz
\end{multline*}
    for every $Q_{R, \ell}(z_0) \subset \Omega_T$ with $R, \ell>0$.
\end{lemma}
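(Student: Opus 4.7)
The strategy is to insert a separable test function $\varphi(x,t)=\eta(x)\chi_\delta(t)$ into the weak formulation of \eqref{eq : main equation}, where $\chi_\delta\in C_0^\infty((t_0-\ell,t_0+\ell))$ is a standard smooth approximation of the characteristic function $\mathbf{1}_{[t_1,t_2]}$ for fixed $t_1,t_2\in(t_0-\ell,t_0+\ell)$. Since $u_t$ exists only in the distributional sense, I would first pass to a Steklov time average of $u$, perform the computation on the regularized equation, and then let the averaging parameter tend to zero; the assumption $u\in C(0,T;L^2(\Omega))$ ensures that the time traces of $(u\eta)_{B_R(x_0)}$ at $t_1$ and $t_2$ are well-defined and that the limit behaves as expected.

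Plugging $\varphi$ into the weak form and using $\varphi_t=\eta\chi_\delta'$ and $D\varphi=D\eta\,\chi_\delta$ gives
\begin{equation*}
\iints{\Omega_T} u\,\eta(x)\,\chi_\delta'(t)\,dz=\iints{\Omega_T}\mathcal{A}(z,Du)\cdot D\eta(x)\,\chi_\delta(t)\,dz.
\end{equation*}
Sending $\delta\to 0$ so that $\chi_\delta'\,dt$ converges in the sense of measures to $\delta_{t_1}-\delta_{t_2}$, and using the continuity of $t\mapsto(u\eta)_{B_R(x_0)}(t)$, the left-hand side converges to $|B_R(x_0)|\bigl[(u\eta)_{B_R(x_0)}(t_1)-(u\eta)_{B_R(x_0)}(t_2)\bigr]$, while the right-hand side converges to $\int_{t_1}^{t_2}\int_{B_R(x_0)}\mathcal{A}(z,Du)\cdot D\eta\,dx\,dt$. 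This produces the identity
\begin{equation*}
(u\eta)_{B_R(x_0)}(t_2)-(u\eta)_{B_R(x_0)}(t_1)=-\frac{1}{|B_R(x_0)|}\int_{t_1}^{t_2}\int_{B_R(x_0)}\mathcal{A}(z,Du)\cdot D\eta(x)\,dx\,dt.
\end{equation*}

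To finish, I would take absolute values, invoke the growth bound $|\mathcal{A}(z,\xi)|\leq L(|\xi|^{p-1}+a(z)|\xi|^{q-1})$ from \eqref{cond : double phase bounded condition of integrand} together with $\|D\eta\|_{L^\infty}\leq c(n)/R$, extend the time integration from $[t_1,t_2]$ to $(t_0-\ell,t_0+\ell)$, and multiply and divide by $2\ell$ to recast the integral as a parabolic average over $Q_{R,\ell}(z_0)$. This directly yields
\begin{equation*}
|(u\eta)_{B_R(x_0)}(t_2)-(u\eta)_{B_R(x_0)}(t_1)|\leq c(n,L)\,\frac{\ell}{R}\miint{Q_{R,\ell}(z_0)}\left[|Du|^{p-1}+a(z)|Du|^{q-1}\right]dz,
\end{equation*}
with a right-hand side independent of $t_1,t_2$, so the supremum on the left may be taken freely. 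The only real subtlety is the rigorous passage from the distributional identity to pointwise time values, but this is a standard consequence of the Steklov averaging combined with the $C(0,T;L^2(\Omega))$ regularity of $u$; no assumption beyond the two-sided growth bound \eqref{cond : double phase bounded condition of integrand} enters the argument.
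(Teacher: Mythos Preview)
The paper does not supply its own proof of this lemma; it is quoted verbatim from \cite{2023_Gradient_Higher_Integrability_for_Degenerate_Parabolic_Double-Phase_Systems}, Lemma~2.4, without argument. Your proposal is the standard proof of such a gluing lemma and is correct: test the equation with the separable function $\eta(x)\chi_\delta(t)$, pass to the limit in $\delta$ (justified via Steklov averages and the $C(0,T;L^2(\Omega))$ regularity), and then apply the growth bound from \eqref{cond : double phase bounded condition of integrand} together with $\|D\eta\|_{L^\infty}\le c(n)/R$. This is precisely the argument one finds in the cited source and in the broader literature on parabolic $p$-Laplace type problems, so there is nothing to compare beyond noting that your write-up matches the expected proof.
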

The above lemma yields a parabolic Poincar\'{e} inequality.
\begin{lemma}[\cite{2023_Gradient_Higher_Integrability_for_Degenerate_Parabolic_Double-Phase_Systems}, Lemma 2.5] \label{lem : semi-Parabolic Poincare inequality}
    Let $u$ be a weak solution to \eqref{eq : main equation}. Then there exists a positive constant $c=c(n, m, L)$ such that
\begin{multline*}
        \miint{Q_{R, \ell}(z_0)} \frac{|u-u_{\mathcal{Q}_{R, \ell}(z_0)}|^{\theta m}}{R^{\theta m}} \,dz \\
        \leq c \miint{Q_{R, \ell}(z_0)}|D u|^{\theta m} \,dz+c\left(\frac{\ell}{R^2} \iints{Q_{R, \ell}\left(z_0\right)} \left[|D u|^{p-1}+a(z)|D u|^{q-1}\right] d z\right)^{\theta m}
\end{multline*}
    for every $Q_{R, \ell}(z_0)=B_R(x_0) \times(t_0-\ell, t_0+\ell) \subset \Omega_T$ with $R, \ell>0, m \in(1, q]$ and $\theta \in(1 / m, 1]$.
\end{lemma}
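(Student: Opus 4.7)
The plan is to decompose the oscillation of $u$ over $Q_{R,\ell}(z_0)$ into a spatial part, handled by the classical Poincar\'{e} inequality on balls, and a time part, handled by the preceding gluing lemma. Choose a cutoff $\eta\in C_0^\infty(B_R(x_0))$ satisfying the normalization used in the gluing lemma, and set $\bar{u}(t) := (u\eta)_{B_R(x_0)}(t)$ together with its time average $\bar{u}_Q := \dashint_{t_0-\ell}^{t_0+\ell}\bar{u}(t)\,dt$. The starting point is the triangle inequality
$$
|u(z) - u_{Q_{R,\ell}(z_0)}| \le |u(z) - \bar{u}(t)| + |\bar{u}(t) - \bar{u}_Q| + |\bar{u}_Q - u_{Q_{R,\ell}(z_0)}|,
$$
which, after raising to the $\theta m$-th power, dividing by $R^{\theta m}$ and averaging over $Q_{R,\ell}(z_0)$, splits the left-hand side into three pieces.

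First I would bound the first and third pieces by the classical spatial Poincar\'{e} inequality applied slice-wise in $t$. Since $\theta m > 1$,
$$
\dashint_{B_R(x_0)} |u(\cdot,t) - (u)_{B_R(x_0)}(t)|^{\theta m}\,dx \le c(n,m)\,R^{\theta m} \dashint_{B_R(x_0)} |Du(\cdot,t)|^{\theta m}\,dx,
$$
while the correction $|(u)_{B_R(x_0)}(t) - \bar{u}(t)|$, arising from swapping the unweighted mean for the $\eta$-weighted one, can be rewritten as $\bigl|\dashint_{B_R(x_0)}(u(\cdot,t)-(u)_{B_R(x_0)}(t))(\eta-1)\,dx\bigr|$ and thus absorbed into the same right-hand side via $\|\eta\|_{L^\infty}\le c(n)$, the $L^1$-Poincar\'{e} inequality and H\"{o}lder's inequality. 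Averaging in $t$ handles the first summand. For the third, the identity $\bar{u}_Q - u_{Q_{R,\ell}(z_0)} = \dashint_{t_0-\ell}^{t_0+\ell}[\bar{u}(t)-(u)_{B_R(x_0)}(t)]\,dt$ together with Jensen's inequality reduces it to exactly the same slice-wise bound. These two contributions assemble into the first term on the right-hand side of the claim.

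For the middle piece, I would invoke the gluing lemma directly: uniformly in $t\in(t_0-\ell,t_0+\ell)$,
$$
|\bar{u}(t) - \bar{u}_Q| \le \sup_{t_1,t_2\in(t_0-\ell,t_0+\ell)}|\bar{u}(t_2) - \bar{u}(t_1)| \le c\,\frac{\ell}{R}\miint{Q_{R,\ell}(z_0)}\bigl[|Du|^{p-1} + a(z)|Du|^{q-1}\bigr]\,dz.
$$
Since this bound is independent of $z$, raising to the $\theta m$-th power, dividing by $R^{\theta m}$ and averaging over $Q_{R,\ell}(z_0)$ yields the time-oscillation term on the right-hand side, the asymmetric factor $\ell/R^2$ emerging from the $\ell/R$ of the gluing estimate combined with the $1/R$ in the normalization. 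The main obstacle to watch is the bookkeeping between the weighted average $\bar{u}(t)$ required by the gluing lemma and the unweighted spatial mean $(u)_{B_R(x_0)}(t)$ attached to $u_{Q_{R,\ell}(z_0)}$; once this reconciliation is performed slice-wise as above, the remainder is a routine combination of Poincar\'{e}'s inequality and the gluing estimate.
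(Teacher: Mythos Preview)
Your proposal is correct and follows exactly the standard route that the cited reference takes; the present paper does not supply its own proof but simply quotes the lemma from \cite{2023_Gradient_Higher_Integrability_for_Degenerate_Parabolic_Double-Phase_Systems}. One minor remark: your argument naturally produces the \emph{averaged} integral $\miint{Q_{R,\ell}(z_0)}[\,\cdot\,]\,dz$ inside the bracketed time-oscillation term (matching the form of the gluing lemma), whereas the statement as transcribed here carries the solid integral $\iints{Q_{R,\ell}(z_0)}$; this appears to be a typographical slip in the transcription, since the averaged form is what is actually invoked downstream (cf.\ Lemma~\ref{lem : last term estimate in Lemma lem : semi-Parabolic Poincare inequality whenever p-intrinsic cylinder} and the proofs of Lemmas~\ref{lem : p-intrinsic parabolic Poincare inequality of p-term in p-intrinsic cylinder}--\ref{lem : p-intrinsic parabolic Poincare inequality of q-term in p-intrinsic cylinder}).
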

\section{\bf Stopping time argument}\label{section 3}
We put
\begin{equation}\label{def : lambda_0 and Lambda_0}
\lambda_0^2\coloneq \miint{Q_{2r}(z_0)} \left[H(z,|Du|)+1\right] dz\quad \text{and}\quad \Lambda_0\coloneq \lambda_0^p+\sup_{Q_{2r}(z_0)} a(\cdot)\lambda_0^q,
\end{equation}
where $Q_{2r}(z_0)=B_{2r}(x_0)\times (t_0-(2r)^2,t_0+(2r)^2)$. Let 
\begin{equation}\label{def : definition of K and kappa}
    K\coloneq \begin{cases}
        1+80c_b[a]_\alpha &\text{if \eqref{cond : main assumption with infty} holds,}\\
        1+80c_s[a]_\alpha &\text{if \eqref{cond : main assumption with s} holds,}
    \end{cases}\quad \text{and}\quad \kappa\coloneq 10K,
\end{equation}
where $c_b$ and $c_s$ will be defined in Lemmas \ref{lem : no occurence with s=infty} and \ref{lem : no occurence with s<infty}, respectively.
For $\Psi(\Lambda)$ as in \eqref{def : definition of Psi} and $\varrho\in [r,2r]$, we write
$$
\Psi(\Lambda,\varrho)\coloneq  \Psi(\Lambda)\cap Q_\varrho(z_0)=\{z\in Q_\varrho(z_0):H(z,|Du(z)|)>\Lambda\}.
$$

We now apply a stopping time argument. Let $r\leq r_1<r_2\leq 2r$ and
$$
\Lambda>\left(\frac{4\kappa r}{r_2-r_1}\right)^{\frac{q(n+2)}{2}}\Lambda_0,
$$
where $\kappa$ is as in \eqref{def : definition of K and kappa}. For any $w\in \Psi(\Lambda,r_1)$, we choose $\lambda_w>0$ such that
\begin{equation}\label{cond : Lambda=H(lambda)}
\Lambda=\lambda_w^p+a(w)\lambda_w^q=H_w(\lambda_w),
\end{equation}
where $H_{w}$ denotes the function defined in \eqref{def : definition of H with a fixed center z_0} with $z_0$ replaced by $w$. According to \cite[Subsection 5.1]{2023_Gradient_Higher_Integrability_for_Degenerate_Parabolic_Double-Phase_Systems}, we see that there exists $\varrho_w \in (0,(r_2-r_1)/2\kappa)$ such that
\begin{equation}\label{cond : integral of H in =varrho}
\miint{Q_{\varrho_w}^{\lambda_w}(w)} H(z,|Du|)\, dz =\lambda_w^p
\end{equation}
and
\begin{equation}\label{cond : integral of H in >varrho}
\miint{Q_{\varrho}^{\lambda_w}(w)} H(z,|Du|)\, dz <\lambda_w^p
\end{equation}
for any $\varrho\in(\varrho_w,r_2-r_1)$. Moreover, we obtain from \cite[Subsection 5.1]{2023_Gradient_Higher_Integrability_for_Degenerate_Parabolic_Double-Phase_Systems} that
\begin{equation}\label{cond : relation of lambda_xi and lambda_0}
    \lambda_w\leq \left(\frac{2r}{\varrho_w}\right)^\frac{n+2}{2}\lambda_0.
\end{equation}

For $K>1$ as in \eqref{def : definition of K and kappa}, we consider the following three cases: 
\begin{enumerate}
    \item\label{case : p-phase} $\displaystyle K\lambda_w^p\geq \sup_{Q_{10\varrho_w}(w)}a(\cdot)\lambda_w^q$,
    \item\label{case : p,q-phase} $\displaystyle K\lambda_w^p\leq \sup_{Q_{10\varrho_w}(w)}a(\cdot)\lambda_w^q\qquad$ and $\qquad\displaystyle \sup_{Q_{10\varrho_w}(w)}a(\cdot)\geq 4[a]_\alpha (10\varrho_w)^\alpha$,
    \item\label{case : no occurence} $\displaystyle K\lambda_w^p\leq \sup_{Q_{10\varrho_w}(w)}a(\cdot)\lambda_w^q\qquad$ and $\qquad\displaystyle \sup_{Q_{10\varrho_w}(w)}a(\cdot)\leq 4[a]_\alpha (10\varrho_w)^\alpha$.
\end{enumerate}
\textbf{Case \eqref{case : p-phase}}: By using \eqref{cond : integral of H in =varrho} and \eqref{cond : integral of H in >varrho} and replacing the center point $w$, radius $\varrho_w$ and $\lambda_w$ with $z_0$, $\rho$ and $\lambda$, respectively, we obtain
\begin{equation}\label{cond : p-phase condition}
    \left\{\begin{aligned}
        &K\lambda^p\geq \sup_{Q_{10\rho}(z_0)} a(\cdot)\lambda^q,\\
        &\miint{Q_\sigma^\lambda (z_0)} H(z,|Du|)\, dz <\lambda^p \quad \text{for any }\sigma\in(\rho,2\kappa\rho],\\
        &\miint{Q_\rho^\lambda(z_0)} H(z,|Du|)\, dz = \lambda^p.
    \end{aligned}\right.
\end{equation}
\textbf{Case \eqref{case : p,q-phase}}: We obtain from \eqref{case : p,q-phase}$_2$ that
$$
4[a]_\alpha (10\varrho_w)^\alpha\leq \sup_{Q_{10\varrho_w}(w)} a(\cdot)\leq \inf_{Q_{10\varrho_w}(w)}a(\cdot)+2[a]_\alpha (10\varrho_w)^\alpha,
$$
and hence
$$
\sup_{Q_{10\varrho_w}(w)}a(\cdot)\leq \inf_{Q_{10\varrho_w} (w)} a(\cdot) +2[a]_\alpha (10\varrho_w)^\alpha\leq 2\inf_{Q_{10\varrho_w} (w)} a(\cdot).
$$
Therefore, we get
\begin{equation}    \label{cond : comparison in p,q-phase}
    \frac{a(w)}{2}\leq a(\tilde{w})\leq 2a(w) \quad \text{for every } \tilde{w}\in Q_{10\varrho_w} (w).
\end{equation}
It follows from \eqref{case : p,q-phase}$_2$ and \eqref{cond : comparison in p,q-phase} that $a(w)>0$ and $G_\sigma^{\lambda_w}(w) \varsubsetneq Q_\sigma^{\lambda_w}(w)$. By \cite[Subsection 5.1]{2023_Gradient_Higher_Integrability_for_Degenerate_Parabolic_Double-Phase_Systems}, there exists $\varsigma_w\in (0,\varrho_w]$ such that
\begin{equation}\label{cond : integral of H in =varsigma in p,q-phase}
    \miint{G^{\lambda_w}_{\varsigma_w}(w)} H(z,|Du|)\, dz=H_w(\lambda_w)
\end{equation}
and
\begin{equation}\label{cond : integral of H in >varsigma in p,q-phase}
    \miint{G^{\lambda_w}_{\sigma}(w)} H(z,|Du|)\, dz<H_w(\lambda_w)
\end{equation}
for any $\sigma\in(\varsigma_w,r_2-r_1)$. Hence, if we replace the center point $w$, radius $\varsigma_w$ and $\lambda_w$ in \eqref{cond : comparison in p,q-phase}-\eqref{cond : integral of H in >varsigma in p,q-phase} with $z_0$, $\rho$ and $\lambda$, respectively, we obtain
\begin{equation}\label{cond : p,q-phase condition}
    \left\{\begin{aligned}
        &K\lambda^p\leq \sup_{Q_{10\rho}(z_0)} a(\cdot)\lambda^q,\quad \frac{a(z_0)}{2}\leq a(z)\leq 2a(z_0)\quad \text{for every }z\in G_{4\rho}^\lambda(z_0),\\
        &\miint{G_\sigma^\lambda (z_0)} H(z,|Du|)\, dz <H_{z_0}(\lambda) \quad \text{for any }\sigma\in(\rho,2\kappa\rho],\\
        &\miint{G_\rho^\lambda(z_0)} H(z,|Du|)\, dz = H_{z_0}(\lambda).
    \end{aligned}\right.
\end{equation}
\textbf{Case \eqref{case : no occurence}}: We shall rigorously exclude the possibility of this case by proving the estimates
\begin{equation}\label{cond : the impossibility of Case (3)}
\left\{\begin{aligned}
    &\lambda_w \lesssim \varrho_w^{-1} \quad\quad \text{if } \eqref{cond : main assumption with infty} \text{ holds},\\
    &\lambda_w \lesssim \varrho_w^{-\frac{n+s}{s}} \quad \text{if } \eqref{cond : main assumption with s} \text{ holds}.
\end{aligned}\right.
\end{equation}
\begin{lemma}\label{lem : no occurence with s=infty}
    Let $u$ be a weak solution to \eqref{eq : main equation}, and suppose that
\begin{equation}\label{cond : no occurence}
    \sup_{Q_{10\varrho_w}(w)}a(\cdot)\leq 4[a]_\alpha (10\varrho_w)^\alpha.
\end{equation}
    If \eqref{cond : main assumption with infty} holds, then there exists a constant $c_b=c_b(\operatorname{data}_b)>1$ such that
    $$
    \varrho_w \leq c_b\lambda_w^{-1}.
    $$
\end{lemma}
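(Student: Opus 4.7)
The approach is to compare two bounds for the same integral mean $\miint{Q_{\varrho_w}^{\lambda_w}(w)} H(z,|Du|)\,dz$: a lower bound equal to $\lambda_w^p$ given by the stopping-time identity \eqref{cond : integral of H in =varrho}, and an upper bound produced by the Caccioppoli inequality (Lemma \ref{lem : Caccioppoli inequality}). On the Caccioppoli side, the point is that $u\in L^\infty(\Omega_T)$ controls the oscillation of $u$ by a constant and the hypothesis \eqref{cond : no occurence} makes the $a$-dependent term small on a neighbourhood of $w$, which together will force $\lambda_w\varrho_w$ to be bounded.

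Before invoking the PDE, I would dispose of the easy regime $\lambda_w<1$: there $\lambda_w\varrho_w<\varrho_w$, and the stopping-time construction already guarantees $\varrho_w\leq\operatorname{diam}(\Omega)$, so the desired inequality holds with a constant depending only on $\operatorname{diam}(\Omega)$. So assume $\lambda_w\geq 1$. Since $p\geq 2$, this yields $\lambda_w^{2-p}\leq 1$, and therefore
$$
Q_{2\varrho_w}^{\lambda_w}(w)\subset B_{2\varrho_w}(x_w)\times (t_w-4\varrho_w^2,t_w+4\varrho_w^2)\subset Q_{10\varrho_w}(w)\subset \Omega_T.
$$
In particular the pointwise bound $a(z)\leq 2[a]_\alpha(10\varrho_w)^\alpha$ from \eqref{cond : no occurence} is available throughout the enlarged intrinsic cylinder used below.

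Next I apply Lemma \ref{lem : Caccioppoli inequality} with $R=2\varrho_w$, $r=\varrho_w$, $\ell=4\lambda_w^{2-p}\varrho_w^2$ and $\tau=\lambda_w^{2-p}\varrho_w^2$, and rewrite the left-hand side as $\lambda_w^p$ via \eqref{cond : integral of H in =varrho}. On the right I estimate $|u-u_{Q_{R,\ell}}|\leq 2\|u\|_{L^\infty(\Omega_T)}$, pull the bound on $a(\cdot)$ outside the integral, and invoke the gap condition $q\leq p+\alpha$, together with $\varrho_w\leq\operatorname{diam}(\Omega)$, to absorb all factors of $\varrho_w^{p+\alpha-q}\geq 0$ into data-dependent constants. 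Multiplying through by $\varrho_w^p$ then produces an inequality of the form
$$
(\lambda_w\varrho_w)^p\leq c_1+c_2\,(\lambda_w\varrho_w)^{p-2},
$$
with $c_1,c_2$ depending only on $\operatorname{data}_b$. When $p=2$ this already bounds $\lambda_w\varrho_w$; when $p>2$, a weighted Young's inequality absorbs the $(\lambda_w\varrho_w)^{p-2}$ term into the left-hand side, giving $\lambda_w\varrho_w\leq c_b(\operatorname{data}_b)$, which is exactly the claim.

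The main subtlety is the containment $Q_{2\varrho_w}^{\lambda_w}(w)\subset Q_{10\varrho_w}(w)$: for $p>2$ it fails when $\lambda_w$ is too small, because intrinsic cylinders then become tall and thin, which is the sole reason for the preliminary dichotomy on $\lambda_w\geq 1$. Conceptually, the gap condition $q\leq p+\alpha$ is precisely what makes the $a$-dependent Caccioppoli term scale no worse than the $p$-term after the multiplication by $\varrho_w^p$, producing the balanced inequality above; this explains why the same argument is expected to degenerate when passing to the weaker gap condition appearing in Lemma \ref{lem : no occurence with s<infty}.
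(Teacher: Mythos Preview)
Your proposal is correct and follows essentially the same route as the paper's proof: both start from the stopping-time identity $\lambda_w^p=\miint{Q_{\varrho_w}^{\lambda_w}(w)}H(z,|Du|)\,dz$, apply the Caccioppoli inequality on $Q_{2\varrho_w}^{\lambda_w}(w)$, bound $|u-u_Q|$ by $2\|u\|_{L^\infty}$, control the $a$-term via \eqref{cond : no occurence} and $q\le p+\alpha$, and absorb the quadratic term by Young's inequality. The only cosmetic difference is that the paper applies Young's inequality to the bound $\lambda_w^p\le c\varrho_w^{-p}+c\lambda_w^{p-2}\varrho_w^{-2}$ directly, whereas you first multiply by $\varrho_w^p$ and then absorb; and you are more explicit than the paper about the containment $Q_{2\varrho_w}^{\lambda_w}(w)\subset Q_{10\varrho_w}(w)$ (in fact the preliminary dichotomy on $\lambda_w<1$ is vacuous here, since the stopping-time setup forces $\Lambda>\Lambda_0\ge 1+\sup_{Q_{2r}(z_0)}a(\cdot)\ge 1+a(w)$, which rules out $\lambda_w<1$).
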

\begin{proof}
    By Lemma \ref{lem : Caccioppoli inequality} and \eqref{cond : integral of H in =varrho}, we get
    \begin{align}
    \lambda_w^p&=\miint{Q_{\varrho_w}^{\lambda_w}(w)}H(z,|Du|)\,dz \nonumber\\
    &\leq c\miint{Q_{2\varrho_w}^{\lambda_w}(w)}\left[\frac{\Big|u-u_{Q_{2\varrho_w}^{\lambda_w}(w)}\Big|^p}{(2\varrho_w)^p}+a(z)\frac{\Big|u-u_{Q_{2\varrho_w}^{\lambda_w}(w)}\Big|^q}{(2\varrho_w)^q}\right] dz \nonumber\\ 
    &\qquad +c\lambda_w^{p-2}\miint{Q_{2\varrho_w}^{\lambda_w}(w)}\frac{\Big|u-u_{Q_{2\varrho_w}^{\lambda_w}(w)}\Big|^2}{(2\varrho_w)^2}\,dz \nonumber\\ \label{eq : Caccioppoli ineq in no occurence with infty}
    &=\mathrm{I}_1+\mathrm{I}_2+\mathrm{I}_3
    \end{align}
    for some $c=c(n,p,q,\nu,L)>1$. We note from the triangle inequality and Jensen's inequality that
    \begin{equation}\label{eq : estimation of u-mean of u as u}
        \miint{Q_{2\varrho_w}^{\lambda_w}(w)}\frac{\Big|u-u_{Q_{2\varrho_w}^{\lambda_w}(w)}\Big|^\gamma}{(2\varrho_w)^\gamma}\,dz\leq c(\gamma)\miint{Q_{2\varrho_w}^{\lambda_w}(w)}\frac{|u|^\gamma}{(2\varrho_w)^\gamma} \, dz
    \end{equation}
    holds for any $\gamma\in [2,\infty)$.

    \textbf{Estimate of $\mathrm{I}_1$.} By \eqref{eq : estimation of u-mean of u as u}, we get
    $$
    \mathrm{I}_1\leq c\miint{Q_{2\varrho_w}^{\lambda_w}(w)}\frac{|u|^p}{(2\varrho_w)^p} \, dz\leq c\varrho_w^{-p}
    $$
    for some $c=c(n,p,q,\nu,L,\|u\|_{L^\infty(\Omega_T)})>1$.

    \textbf{Estimate of $\mathrm{I}_2$.} By \eqref{cond : no occurence}, \eqref{eq : estimation of u-mean of u as u} and \eqref{cond : main assumption with infty}$_3$, we get
    $$
    \mathrm{I}_2\leq c\varrho_w^\alpha\miint{Q_{2\varrho_w}^{\lambda_w}(w)}\frac{|u|^q}{(2\varrho_w)^q} \, dz\leq c\|u\|_{L^\infty(\Omega_T)}^q \varrho_w^{\alpha-q}\leq c\varrho_w^{-p}
    $$
    for some $c=c(n,p,q,\alpha,\nu,L,\operatorname{diam}(\Omega),[a]_\alpha,\|u\|_{L^\infty(\Omega_T)})>1$.

    \textbf{Estimate of $\mathrm{I}_3$.} By \eqref{eq : estimation of u-mean of u as u} and Young's inequality, we get
    $$
    \mathrm{I}_3\leq c\lambda_w^{p-2}\miint{Q_{2\varrho_w}^{\lambda_w}(w)}\frac{|u|^2}{(2\varrho_w)^2} \, dz\leq c\|u\|_{L^\infty(\Omega_T)}^2 \lambda_w^{p-2} \varrho_w^{-2}\leq \frac{1}{2}\lambda_w^p+c\varrho_w^{-p}
    $$
    for some $c=c(n,p,q,\nu,L,\|u\|_{L^\infty(\Omega_T)})>1$.

    Combining the above results with \eqref{eq : Caccioppoli ineq in no occurence with infty}, we conclude that
    $$
    \lambda_w^p\leq c\varrho_w^{-p}.
    $$
\end{proof}
Next, we prove \eqref{cond : the impossibility of Case (3)}$_2$ using the Gagliardo-Nirenberg multiplicative embedding inequality.
\begin{lemma}\label{lem : no occurence with s<infty}
Let $u$ be a weak solution to \eqref{eq : main equation}, and suppose that \eqref{cond : no occurence} is satisfied.
If \eqref{cond : main assumption with s} holds for some $2\leq s <\infty$, then there exists a constant $c_s=c_s(\operatorname{data}_s)>1$ such that
$$
\varrho_w \leq c_s \lambda_w^{-\frac{s}{n+s}}.
$$
\end{lemma}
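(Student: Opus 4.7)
I plan to mimic the Caccioppoli-based scheme of Lemma~\ref{lem : no occurence with s=infty}, replacing the crude $L^\infty$ control of $u-u_{Q}$ by a parabolic Gagliardo--Nirenberg interpolation that uses only the hypothesis $u \in C(0,T;L^s(\Omega))$. I apply Lemma~\ref{lem : Caccioppoli inequality} on $Q := Q_{2\varrho_w}^{\lambda_w}(w)$ together with \eqref{cond : integral of H in =varrho} to obtain the same three-term decomposition
$$
\lambda_w^p \leq c\,(\mathrm{I}_1 + \mathrm{I}_2 + \mathrm{I}_3)
$$
as in \eqref{eq : Caccioppoli ineq in no occurence with infty}, where $\mathrm{I}_1,\,\mathrm{I}_2,\,\mathrm{I}_3$ are the averages over $Q$ of $|u-u_{Q}|^p/\varrho_w^p$, of $a(z)|u-u_{Q}|^q/\varrho_w^q$, and of $\lambda_w^{p-2}|u-u_{Q}|^2/\varrho_w^2$, respectively.

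The central new tool is the parabolic Gagliardo--Nirenberg inequality, which I would use in the form
$$
\iints{Q} |u - u_{Q}|^{\gamma}\,dz \leq c\left(\sup_{t}\int_{B_{2\varrho_w}(w_x)} |u - u_{Q}|^s\,dx\right)^{p/n} \iints{Q} |Du|^p\,dz,\qquad \gamma := \tfrac{p(n+s)}{n},
$$
obtained by combining spatial Sobolev--Poincar\'e with H\"older in time. The supremum is controlled by $\|u\|_{C(0,T;L^s(\Omega))}^s$ via a triangle inequality that absorbs the cylinder mean $u_Q$ into the $L^\infty(L^s)$ norm of $u$. Since $2\varrho_w$ still lies in the stopping-time regime of \eqref{cond : integral of H in >varrho}, one has $\iints{Q} |Du|^p\,dz \leq |Q|\lambda_w^p$; inserting this bound and dividing by $|Q| \sim \varrho_w^{n+2}\lambda_w^{2-p}$ produces
$$
\miint{Q} |u - u_{Q}|^{\gamma}\,dz \leq c\,\lambda_w^p,
$$
with $c$ depending on $\operatorname{data}_s$.

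For each $k\in\{1,2,3\}$, I would then pass from $L^p$, $L^q$, or $L^2$ to $L^\gamma$ by H\"older's inequality --- legitimate because $\gamma \geq 2$ uses $s\geq 2$ and $\gamma \geq q$ follows from the gap condition --- and close each term by Young's inequality in the form
$$
\mathrm{I}_k \leq \epsilon\,\lambda_w^p + c(\epsilon)\,\varrho_w^{-p(n+s)/s}.
$$
For $\mathrm{I}_1$ and $\mathrm{I}_3$ this is a routine check of exponents. For $\mathrm{I}_2$ I would first use the no-occurrence hypothesis \eqref{cond : no occurence} to bound $a(z)\leq c\,\varrho_w^\alpha$ on $Q$, after which the admissibility condition for the relevant Young's inequality reduces to $s(q-\alpha) \leq p(n+s) - qn$, which is exactly equivalent to \eqref{cond : main assumption with s}$_3$. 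Choosing $\epsilon$ small and absorbing the $\lambda_w^p$ contributions on the left-hand side yields $\lambda_w^p \leq c\,\varrho_w^{-p(n+s)/s}$, which is the claimed bound $\varrho_w \leq c_s\,\lambda_w^{-s/(n+s)}$.

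The main obstacle I anticipate is $\mathrm{I}_2$: this is precisely where the gap condition $q \leq p + \tfrac{s\alpha}{n+s}$ is forced with sharp equality in the Young-exponent accounting, and this calibration is what dictates the exponent $\tfrac{s}{n+s}$ appearing in the conclusion. A secondary technical point is the passage from the cylinder mean $u_Q$ to a spatial mean required by Gagliardo--Nirenberg, which I would handle either by a short triangle-inequality argument or by an appeal to the gluing lemma of the paper.
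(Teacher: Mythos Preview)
Your scheme is correct and, once the spatial-mean issue you flag at the end is handled, closes exactly as you describe: the Young balance for $\mathrm{I}_2$ is equivalent to $q\le p+\tfrac{s\alpha}{n+s}$, and the remaining terms come out as $\epsilon\lambda_w^p+c\,\varrho_w^{-p(n+s)/s}$ without further hypotheses.

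The route, however, is organized differently from the paper's. The paper does \emph{not} pass through a single $L^\gamma$ estimate with $\gamma=p(n+s)/n$; instead it treats $\mathrm{I}_1$ and $\mathrm{I}_2$ by a case distinction ($p\le s$ versus $p>s$, then $q\le s$ versus $q>s$). In the easy cases H\"older to $L^s$ suffices. In the hard cases the paper splits $u-u_Q=(u-(u)_B(t))+((u)_B(t)-u_Q)$, applies the spatial Gagliardo--Nirenberg multiplicative embedding to the first piece with interpolation parameters $\theta_1,\theta_2$ (between $|Du|^p$ and $|u-(u)_B|^s$), and bounds the second piece directly by $\|u\|_{C(0,T;L^s)}$. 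A nontrivial share of the paper's proof is the verification that $\theta_2\in[0,1]$ and that the H\"older/Young exponents are admissible, which again hinges on the gap bound. Your approach replaces all of this case analysis by one parabolic embedding at the fixed exponent $\gamma=p(n+s)/n$ followed by H\"older down to $p,q,2$; the checks $\gamma\ge q$ and $\gamma\ge 2$ are immediate, and the mean-subtraction contributes only an extra $\varrho_w^{-p(n+s)/s}$ term (this is exactly the paper's $\mathrm{J}_2,\mathrm{J}_4$). So your argument is shorter and more uniform, while the paper's makes the role of the interpolation parameter more explicit in each regime.
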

\begin{proof}
    As in Lemma \ref{lem : no occurence with s=infty}, we deduce that Lemma \ref{lem : Caccioppoli inequality} and \eqref{cond : integral of H in =varrho} imply
    \begin{align}
        \lambda_w^p&=\miint{Q_{\varrho_w}^{\lambda_w}(w)}H(z,|Du|)\,dz \nonumber\\
        &\leq c\miint{Q_{2\varrho_w}^{\lambda_w}(w)}\left[\frac{\Big|u-u_{Q_{2\varrho_w}^{\lambda_w}(w)}\Big|^p}{(2\varrho_w)^p}+a(z)\frac{\Big|u-u_{Q_{2\varrho_w}^{\lambda_w}(w)}\Big|^q}{(2\varrho_w)^q}\right] dz \nonumber\\ \label{eq : Caccioppoli ineq in no occurence}
        &\quad +c\lambda_w^{p-2}\miint{Q_{2\varrho_w}^{\lambda_w}(w)}\frac{\Big|u-u_{Q_{2\varrho_w}^{\lambda_w}(w)}\Big|^2}{(2\varrho_w)^2}\,dz=\mathrm{I}_1+\mathrm{I}_2+\mathrm{I}_3
    \end{align}
    for some $c=c(n,p,q,\nu,L)>1$.    
    
    \textbf{Estimate of $\mathrm{I}_3$.} We obtain from \eqref{eq : estimation of u-mean of u as u}, H\"{o}lder's inequality and Young's inequality that
    \begin{align}
        \mathrm{I}_3&\leq c\lambda_w^{p-2}\miint{Q_{2\varrho_w}^{\lambda_w}(w)}\frac{|u|^2}{(2\varrho_w)^2} \, dz \nonumber\\
        &\leq c\lambda_w^{p-2}\left(\miint{Q_{2\varrho_w}^{\lambda_w}(w)}\frac{|u|^s}{(2\varrho_w)^s} \, dz\right)^{\frac{2}{s}} \nonumber\\
        &\leq c\frac{\lambda_w^{p-2}}{(2\varrho_w)^2}\left(\frac{1}{|B_{2\varrho_w}|}\sup_{t\in [0,T]}\int_{\Omega}|u|^s\,dx\right)^\frac{2}{s} \nonumber\\
        &\leq c\lambda_w^{p-2}\varrho_w^{-\frac{2(n+s)}{s}} \nonumber\\ \label{eq : estimate of I_3 in lem_no occurence}
        &\leq \frac{1}{4}\lambda_w^p+c\varrho_w^{-\frac{p(n+s)}{s}} 
    \end{align}
    for some $c=c(n,p,q,s,\nu,L,\|u\|_{L^\infty(0,T;L^s(\Omega))})>1$.

    \textbf{Estimate of $\mathrm{I}_1$.} To estimate $\mathrm{I}_1$, we divide the cases according to $p$ and $s$. 
    
    First, if $p\leq s$, by \eqref{eq : estimation of u-mean of u as u} and H\"{o}lder's inequality, we get 
    \begin{align}
        \mathrm{I}_1&\leq c\miint{Q_{2\varrho_w}^{\lambda_w}(w)}\frac{|u|^p}{(2\varrho_w)^p} \, dz \nonumber\\
        &\leq c\left(\miint{Q_{2\varrho_w}^{\lambda_w}(w)}\frac{|u|^s}{(2\varrho_w)^s} \, dz\right)^{\frac{p}{s}} \nonumber\\ \label{eq : estimate of I_1 in lem_no occurence with p<s}
        &\leq c\varrho_w^{-\frac{p(n+s)}{s}}
    \end{align}
    for some $c=c(n,p,q,s,\nu,L,\|u\|_{L^\infty(0,T;L^s(\Omega))})>1$.

    Now, suppose that $p>s$. Then we obtain
    \begin{align*}
        \mathrm{I}_1&\leq c\miint{Q_{2\varrho_w}^{\lambda_w}(w)}\frac{\Big|u-u_{B_{2\varrho_w}(x_0)}(t)\Big|^p}{(2\varrho_w)^p}\,dx\,dt+c\dashint_{I_{2\varrho_w}^{\lambda_w}(t_0)}\frac{\Big|u_{B_{2\varrho_w}(x_0)}(t)-u_{Q_{2\varrho_w}^{\lambda_w}(w)}\Big|^p}{(2\varrho_w)^p}\, dt \\ 
        &=\mathrm{J}_1+\mathrm{J}_2,
    \end{align*}
    where $w=(x_0, t_0)$. By the Gagliardo-Nirenberg multiplicative embedding inequality in \cite[Theorem 2.1 and Remark 2.1]{DiBenedetto1983}, we have
    $$
    \begin{aligned}
        &\mathrm{J}_1= c\dashint_{I_{2\varrho_w}^{\lambda_w}(t_0)}\left(\dashint_{B_{2\varrho_w}(x_0)}\frac{\Big|u-u_{B_{2\varrho_w}(x_0)}(t)\Big|^p}{(2\varrho_w)^p}\,dx\right)\,dt\\
        &\;\;\leq c\dashint_{I_{2\varrho_w}^{\lambda_w}(t_0)}\left(\dashint_{B_{2\varrho_w}(x_0)} |Du|^p\, dx\right)^{\theta_1}\left(\dashint_{B_{2\varrho_w}(x_0)}\frac{\Big|u-u_{B_{2\varrho_w}(x_0)}(t)\Big|^s}{(2\varrho_w)^s}\,dx\right)^{\frac{p(1-\theta_1)}{s}}\,dt,
    \end{aligned}
    $$
    where $\theta_1=\left(\frac{1}{s}-\frac{1}{p}\right)\left(\frac{1}{n}+\frac{1}{s}-\frac{1}{p}\right)^{-1}\in [0,1]$ and $c=c(n,p,q,s,\nu,L)>1$. If follows from \eqref{cond : integral of H in >varrho}, \eqref{eq : estimation of u-mean of u as u}, and Young's inequality that
    \begin{align*}
        \mathrm{J}_1&\leq c\|u\|_{L^\infty(0,T;L^s(\Omega))}\varrho_w^{-\frac{p(n+s)(1-\theta_1)}{s}}\lambda_w^{p\theta_1} \\
        &\leq \frac{1}{4}\lambda_w^p+c\varrho_w^{-\frac{p(n+s)}{s}}.
    \end{align*}
    Here, $c>1$ depends on $n,p,q,s,\nu,L$ and $\|u\|_{L^\infty(0,T;L^s(\Omega))}$.
    Next, $\mathrm{J}_2$ is easily calculated as follows:
    \begin{align*}
        \mathrm{J}_2&\leq c\varrho_w^{-p}\dashint_{I_{2\varrho_w}^{\lambda_w}(t_0)}\dashint_{I_{2\varrho_w}^{\lambda_w}(t_0)} |u_{B_{2\varrho_w}(x_0)}(\tilde{t})-u_{B_{2\varrho_w}(x_0)}(t)|^p\,dt\,d\tilde{t} \\
        &\leq c\varrho_w^{-p}\dashint_{I_{2\varrho_w}^{\lambda_w}(t_0)} |u_{B_{2\varrho_w}(x_0)}(t)|^p \, dt \\
        &\leq c\varrho_w^{-p}\sup_{I_{2\varrho_w}^{\lambda_w}(t_0)}\left(\dashint_{B_{2\varrho_w}(x_0)}|u|\,dx\right)^p \\
        &\leq c\varrho_w^{-p}\sup_{I_{2\varrho_w}^{\lambda_w}(t_0)}\left(\dashint_{B_{2\varrho_w}(x_0)}|u|^s\,dx\right)^\frac{p}{s} \\
        &\leq c\varrho_w^{-\frac{p(n+s)}{s}}\sup_{I_{2\varrho_w}^{\lambda_w}(t_0)}\left(\int_{\Omega}|u|^s\,dx\right)^\frac{p}{s}\\ 
        &\leq c\varrho_w^{-\frac{p(n+s)}{s}}
    \end{align*}
    for some $c=c(n,p,q,s,\nu,L,\|u\|_{L^\infty(0,T;L^s(\Omega))})>1$. Hence, we obtain
    \begin{equation}\label{eq : estimate of I_1 in lem_no occurence with p>s}
        \mathrm{I}_1\leq \frac{1}{4}\lambda_w^p+c\varrho_w^{-\frac{p(n+s)}{s}}.
    \end{equation}
    We conclude from \eqref{eq : estimate of I_1 in lem_no occurence with p<s} and \eqref{eq : estimate of I_1 in lem_no occurence with p>s} that
    \begin{equation}\label{eq : estimate of I_1 in lem_no occurence}
        \mathrm{I}_1\leq \frac{1}{4}\lambda_w^p+c\varrho_w^{-\frac{p(n+s)}{s}}
    \end{equation}
    for some $c=c(n,p,q,s,\nu,L,\|u\|_{L^\infty(0,T;L^s(\Omega))})>1$.

    \textbf{Estimate of $\mathrm{I}_2$.} By \eqref{cond : no occurence}, we have
    $$
    \mathrm{I}_2\leq c\varrho_w^\alpha \miint{Q_{2\varrho_w}^{\lambda_w}(w)}\frac{\Big|u-u_{Q_{2\varrho_w}^{\lambda_w}(w)}\Big|^q}{(2\varrho_w)^q}\,dz
    $$
    for some $c=c(n,p,q,\alpha,\nu,L,[a]_\alpha)>1$. We divide the cases according to $q$ and $s$ in the same way as we estimated $\mathrm{I}_1$.

    If $q\leq s$, since \eqref{cond : main assumption with s}$_3$ implies that $-\frac{p(n+s)}{s}\leq \alpha -\frac{q(n+s)}{s}<0$, it follows from \eqref{eq : estimation of u-mean of u as u} and H\"{o}lder's inequality that
    \begin{align}
        \mathrm{I}_2&\leq c\varrho_w^\alpha \left(\miint{Q_{2\varrho_w}^{\lambda_w}(w)}\frac{\Big|u-u_{Q_{2\varrho_w}^{\lambda_w}(w)}\Big|^s}{(2\varrho_w)^s}\,dz\right)^{\frac{q}{s}}\nonumber\\ \label{eq : estimate of I_2 in lem_no occurence with q=s}
        &\leq c \varrho_w^{\alpha-\frac{q(n+s)}{s}}\leq c\varrho_w^{-\frac{p(n+s)}{s}}
    \end{align}
    for some $c=c(n,p,q,s,\alpha,\nu,L,\operatorname{diam}(\Omega),[a]_\alpha,\|u\|_{L^\infty(0,T;L^s(\Omega))})>1$.

    Finally, assume that $q>s$. Then we obtain
    \begin{align*}
        \mathrm{I}_2&\leq c\varrho_w^\alpha\miint{Q_{2\varrho_w}^{\lambda_w}(w)}\frac{\Big|u-u_{B_{2\varrho_w}(x_0)}(t)\Big|^q}{(2\varrho_w)^q}\,dx dt\\
        &\qquad+c\varrho_w^\alpha\dashint_{I_{2\varrho_w}^{\lambda_w}(t_0)}\frac{\Big|u_{B_{2\varrho_w}(x_0)}(t)-u_{Q_{2\varrho_w}^{\lambda_w}(w)}\Big|^q}{(2\varrho_w)^q}\, dt \\ 
        &=\mathrm{J}_3+\mathrm{J}_4,
    \end{align*}
    where $w=(x_0, t_0)$. By the Gagliardo-Nirenberg multiplicative embedding inequality in \cite[Theorem 2.1 and Remark 2.1 in Section I]{1993_Degenerate_parabolic_equations_DiBenedetto}, we get
    $$
    \begin{aligned}
        \mathrm{J}_3&= c\varrho_w^\alpha\dashint_{I_{2\varrho_w}^{\lambda_w}(t_0)}\left(\dashint_{B_{2\varrho_w}(x_0)}\frac{\Big|u-u_{B_{2\varrho_w}(x_0)}(t)\Big|^q}{(2\varrho_w)^q}\,dx\right)\,dt\\
        &\leq c\varrho_w^\alpha\dashint_{I_{2\varrho_w}^{\lambda_w}(t_0)}\left(\dashint_{B_{2\varrho_w}(x_0)} |Du|^p\, dx\right)^{\frac{q\theta_2}{p}}\\
        &\qquad\qquad\qquad \times \left(\dashint_{B_{2\varrho_w}(x_0)}\frac{\Big|u-u_{B_{2\varrho_w}(x_0)}(t)\Big|^s}{(2\varrho_w)^s}\,dx\right)^{\frac{q(1-\theta_2)}{s}}\,dt,
    \end{aligned}
    $$
    where $\theta_2=\left(\frac{1}{s}-\frac{1}{q}\right)\left(\frac{1}{n}+\frac{1}{s}-\frac{1}{p}\right)^{-1}$ and $c=c(n,p,q,s,\nu,L)>1$. We shall check that $\theta_2$ is in $[0,1]$. For this, we need
    \begin{equation}\label{cond : relation of p, q, n}
        \frac{1}{q}+\frac{1}{n}-\frac{1}{p}\geq 0.
    \end{equation}
    Indeed, since $q\leq p+\frac{s\alpha}{n+s}\leq p+\frac{s}{n+s}$, if $\frac{1}{p+\frac{s}{n+s}}\geq\frac{n-p}{np}$ holds, then \eqref{cond : relation of p, q, n} holds. Moreover, a simple calculation implies that
    \begin{align}
        \frac{1}{p+\frac{s}{n+s}}\geq\frac{n-p}{np} \quad&\iff \quad np\geq (n-p)\left(p+\frac{s}{n+s}\right) \nonumber\\ \label{cond : relation of p,q,n,s}
        &\iff\quad p^2 n-sn+p^2 s+ps \geq 0,
    \end{align}
    and since $s<q\leq p+1\leq p^2$, \eqref{cond : relation of p,q,n,s} holds. Hence \eqref{cond : relation of p, q, n} is satisfied. Now, if $q> p\geq s$, then clearly $\theta_2\geq 0$. If $q> s\geq p$, then we get from \eqref{cond : relation of p, q, n} that
    $$
    \frac{1}{s}+\frac{1}{n}-\frac{1}{p}> \frac{1}{q}+\frac{1}{n}-\frac{1}{p}\geq 0, 
    $$
    and hence $\theta_2\geq 0$. Finally, \eqref{cond : relation of p, q, n} implies $\theta_2\leq 1$. Next, to use H\"{o}lder's inequality and Young's inequality, we shall check $\frac{p}{q\theta_2}> 1.$
    Since \begin{align*}
        \frac{p}{q\theta_2}&=\frac{p}{q}\bigg(\frac{1}{s}+\frac{1}{n}-\frac{1}{p}\bigg)\bigg(\frac{1}{s}-\frac{1}{q}\bigg)^{-1}\\
        &=\bigg(\frac{p}{s}+\frac{p}{n}-1\bigg)\bigg(\frac{q}{s}-1\bigg)^{-1},
    \end{align*}
    we only need to show
    $$
    \frac{p}{s}+\frac{p}{n}-1> \frac{q}{s}-1.
    $$
    Since $q\leq p+\frac{s}{n+s}$, $2\leq p$ and $n< n+s$, we see that $\frac{q}{s}-1\leq \frac{p}{s}+\frac{1}{n+s}-1< \frac{p}{s}+\frac{p}{n}-1$. Thus, we obtain $\frac{p}{q\theta_2}> 1.$
    Then it follows from H\"{o}lder inequality and Young's inequality that
    \begin{align*}
        \mathrm{J}_3&\leq c\varrho_w^{\alpha-\frac{q(n+s)(1-\theta_2)}{s}}\left(\miint{Q_{2\varrho_w}^{\lambda_w}(w)} |Du|^p\, dz\right)^{\frac{q\theta_2}{p}} \\
        &\leq c\varrho_w^{\alpha-\frac{q(n+s)(1-\theta_2)}{s}}\lambda_w^{q\theta_2} \\
        &\leq \frac{1}{4}\lambda_w^{p}+c\varrho_w^{\left(\alpha-\frac{q(n+s)(1-\theta_2)}{s}\right)\cdot\frac{p}{p-q\theta_2}}.
    \end{align*}
    Since $\alpha\geq \frac{(q-p)(n+s)}{s}$, we observe that
    $$
    \begin{aligned}
        \left(\alpha-\frac{q(n+s)(1-\theta_2)}{s}\right)\cdot\frac{p}{p-q\theta_2}&\geq \left(q-p-q(1-\theta_2)\right)\cdot\frac{1}{p-q\theta_2}\cdot \frac{p(n+s)}{s}\\
        &=-\frac{p(n+s)}{s}.
    \end{aligned}
    $$
    We claim that $\alpha-\frac{q(n+s)(1-\theta_2)}{s}<0$. Indeed, since $\theta_2=\left(\frac{1}{s}-\frac{1}{q}\right)\left(\frac{1}{n}+\frac{1}{s}-\frac{1}{p}\right)^{-1}$, a simple calculation yields
    \begin{align*}
        &\alpha-\frac{q(n+s)(1-\theta_2)}{s}<0 \\
        &\qquad\qquad\iff\quad \alpha-\frac{q(n+s)}{s}<-\frac{q(n+s)}{s}\theta_2\\
        &\qquad\qquad\iff\quad \left(\alpha-\frac{q(n+s)}{s}\right)\left(\frac{1}{n}+\frac{1}{s}-\frac{1}{p}\right)<-\frac{q(n+s)}{s}\left(\frac{1}{s}-\frac{1}{q}\right)\\
        &\qquad\qquad\iff\quad \frac{\alpha}{n}+\frac{\alpha}{s}+\frac{q(n+s)}{ps}<\frac{\alpha}{p}+\frac{q(n+s)}{ns}+\frac{n+s}{s}.
    \end{align*}
    Since $\alpha\in (0,1)$ and $2\leq q$, we obtain
    $\frac{\alpha}{n}<\frac{q}{n}$, $\frac{\alpha}{s}<\frac{q}{s}$ and $\frac{q(n+s)}{ps}\leq \frac{n+s}{s}+\frac{\alpha}{p}$ by \eqref{cond : main assumption with s}. Therefore, the claim holds. Then we get
    \begin{equation*}
        \mathrm{J}_3\leq \frac{1}{4}\lambda_w^{p}+c\varrho_w^{-\frac{p(n+s)}{s}}
    \end{equation*}
    for some $c=c(n,p,q,s,\alpha,\nu,L,\operatorname{diam}(\Omega),[a]_\alpha,\|u\|_{L^\infty(0,T;L^s(\Omega))})>1$.
    As in $\mathrm{J}_2$, we have
    $$
    \mathrm{J}_4\leq c\varrho_w^{\alpha-\frac{q(n+s)}{s}}\leq c\varrho_w^{-\frac{p(n+s)}{s}}.
    $$
    Thus, we obtain
    \begin{equation}\label{eq : estimate of I_2 in lem_no occurence with q>s}
        \mathrm{I}_2\leq \frac{1}{4}\lambda_w^p +c \varrho_w^{-\frac{p(n+s)}{s}}.
    \end{equation}
    By \eqref{eq : estimate of I_2 in lem_no occurence with q=s} and \eqref{eq : estimate of I_2 in lem_no occurence with q>s}, we conclude that
    \begin{equation}\label{eq : estimate of I_2 in lem_no occurence}
        \mathrm{I}_2\leq \frac{1}{4}\lambda_w^p+c\varrho_w^{-\frac{p(n+s)}{s}}
    \end{equation}
    for some $c=c(n,p,q,s,\alpha,\nu,L,\operatorname{diam}(\Omega),[a]_\alpha,\|u\|_{L^\infty(0,T;L^s(\Omega))})>1$.

    Combining \eqref{eq : Caccioppoli ineq in no occurence}, \eqref{eq : estimate of I_3 in lem_no occurence}, \eqref{eq : estimate of I_1 in lem_no occurence} and \eqref{eq : estimate of I_2 in lem_no occurence} gives
    $$
    \lambda_w^p\leq c\varrho_w^{-\frac{p(n+s)}{s}},
    $$
    which completes the proof.
\end{proof}

Now, we show that the case \eqref{case : no occurence} never occurs. If \eqref{case : no occurence} holds, we have
$$
K\lambda_w^p=\sup_{Q_{10\varrho_w}(w)} a(\cdot)\frac{K\lambda_w^p}{\displaystyle\sup_{Q_{10\varrho_w}(w)} a(\cdot)}\leq 40[a]_\alpha\varrho_w^\alpha \lambda_w^q.
$$
When \eqref{cond : main assumption with infty} holds, then it follows from Lemma \ref{lem : no occurence with s=infty} and \eqref{def : definition of K and kappa} that
$$
K\lambda_w^p\leq 40[a]_\alpha\varrho_w^\alpha \lambda_w^q\leq 40c_b[a]_\alpha\lambda_w^{q-\alpha}\leq 40c_b[a]_\alpha\lambda_w^{p}<\frac{K}{2}\lambda_w^p,
$$
which is a contradiction. Similarly, when \eqref{cond : main assumption with s} holds, then it follows from Lemma \ref{lem : no occurence with s<infty} and \eqref{def : definition of K and kappa} that
$$
K\lambda_w^p\leq 40[a]_\alpha\varrho_w^\alpha \lambda_w^q\leq 40c_s[a]_\alpha\lambda_w^{q-\frac{s\alpha}{n+s}}\leq 40c_s[a]_\alpha\lambda_w^{p}<\frac{K}{2}\lambda_w^p,
$$
which is a contradiction. Thus, the case \eqref{case : no occurence} can never happen under either \eqref{cond : main assumption with infty} or \eqref{cond : main assumption with s}.

\section{\bf Parabolic Sobolev-Poincar\'{e} type inequalities}\label{section 4}
Let $z_0=(x_0,t_0)\in\Psi(\Lambda)$ be a Lebesgue point of $|Du(z)|^p+a(z)|Du(z)|^q$, where $\Lambda$ is defined in Section \ref{section 3}. To prove the parabolic Sobolev-Poincar\'{e} inequality, we divide it into two cases: $p$-phase and $(p,q)$-phase.
\subsection{\bf The $p$-phase case}
We assume \eqref{cond : p-phase condition} and estimate the last term in Lemma \ref{lem : semi-Parabolic Poincare inequality}.
\begin{lemma}   \label{lem : last term estimate in Lemma lem : semi-Parabolic Poincare inequality whenever p-intrinsic cylinder}
    Let $u$ be a weak solution to \eqref{eq : main equation} and assume that $Q_{4\rho}^\lambda(z_0)\subset \Omega_T$ satisfies \eqref{cond : p-phase condition}. Then, for $\sigma\in[2\rho,4\rho]$, there exists a positive constant $c=c(\operatorname{data})$ such that
    $$
    \begin{aligned}
        \miint{Q_\sigma^\lambda(z_0)} \left[|Du|^{p-1}+a(z)|Du|^{q-1}\right] dz&\leq \miint{Q_\sigma^\lambda(z_0)} |Du|^{p-1}\, dz\\
        &\quad +c\lambda^{-1+\frac{p}{q}}\miint{Q_\sigma^\lambda(z_0)} a(z)^{\frac{q-1}{q}}|Du|^{q-1}\, dz.\\
    \end{aligned}
    $$
\end{lemma}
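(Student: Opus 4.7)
The first summand $|Du|^{p-1}$ on the left-hand side coincides with the first term on the right-hand side, so the claim reduces to establishing
\begin{equation*}
\miint{Q_\sigma^\lambda(z_0)} a(z)|Du|^{q-1}\, dz \;\leq\; c\,\lambda^{-1+\frac{p}{q}}\miint{Q_\sigma^\lambda(z_0)} a(z)^{\frac{q-1}{q}}|Du|^{q-1}\, dz.
\end{equation*}
A pointwise bound of the form $a(z)^{1/q}\leq c\,\lambda^{-1+p/q}$, equivalently $a(z)\leq c^{q}\lambda^{p-q}$, valid on $Q_\sigma^\lambda(z_0)$, is sufficient: factoring $a(z)=a(z)^{1/q}\cdot a(z)^{(q-1)/q}$, multiplying by $|Du|^{q-1}$, and averaging yields the displayed inequality directly.

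The strategy is therefore to prove the pointwise bound $a(z)\leq K\lambda^{p-q}$ on $Q_\sigma^\lambda(z_0)$, using the first line of \eqref{cond : p-phase condition}, which supplies exactly this bound pointwise on $Q_{10\rho}(z_0)$. This reduces matters to the geometric inclusion $Q_\sigma^\lambda(z_0)\subset Q_{10\rho}(z_0)$. The spatial part $B_\sigma(x_0)\subset B_{10\rho}(x_0)$ is immediate from $\sigma\leq 4\rho$. For the time extent, the stopping-time construction of Section \ref{section 3} yields $\lambda\geq 1$: indeed, $\lambda_0^2\geq 1$ by definition \eqref{def : lambda_0 and Lambda_0}, and the threshold $\Lambda>\bigl(4\kappa r/(r_2-r_1)\bigr)^{q(n+2)/2}\Lambda_0$ combined with the defining relation $\Lambda=\lambda^p+a(w)\lambda^q$ forces $\lambda$ to be at least of unit size. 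Since $p\geq 2$, this gives $\lambda^{2-p}\leq 1$, so
$$
\lambda^{2-p}\sigma^2 \;\leq\; \sigma^2 \;\leq\; 16\rho^2 \;\leq\; 100\rho^2,
$$
which is the required time inclusion.

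With the inclusion in hand, for every $z\in Q_\sigma^\lambda(z_0)\subset Q_{10\rho}(z_0)$ the first relation of \eqref{cond : p-phase condition} gives
$$
a(z)\,\lambda^q \;\leq\; \sup_{Q_{10\rho}(z_0)} a(\cdot)\,\lambda^q \;\leq\; K\lambda^p,
$$
hence $a(z)\leq K\lambda^{p-q}$, and the proof is completed with $c=K^{1/q}$.

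The only subtle point is the verification of the inclusion $Q_\sigma^\lambda(z_0)\subset Q_{10\rho}(z_0)$, since this is the single place where the intrinsic scaling given by $\lambda$ must be reconciled with the fixed exterior scale $10\rho$ at which the p-phase structural bound on $a(\cdot)$ is available; every other step is a straightforward pointwise algebraic manipulation. The hypothesis $\theta\in(\tfrac{q-1}{p},1]$ does not enter this particular estimate; it is imposed here for compatibility with the exponents that will arise when this lemma is inserted into the parabolic Poincar\'e inequality of Lemma~\ref{lem : semi-Parabolic Poincare inequality} in the subsequent subsection.
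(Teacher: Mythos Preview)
Your argument is correct and follows exactly the route taken in the paper: factor $a(z)=a(z)^{1/q}\,a(z)^{(q-1)/q}$, bound $a(z)^{1/q}$ by $\sup_{Q_{10\rho}(z_0)}a(\cdot)^{1/q}$, and then invoke \eqref{cond : p-phase condition}$_1$ to get $\sup_{Q_{10\rho}(z_0)}a(\cdot)^{1/q}\leq K^{1/q}\lambda^{p/q-1}$. The paper's proof does precisely this in two displayed lines and does not spell out the inclusion $Q_\sigma^\lambda(z_0)\subset Q_{10\rho}(z_0)$; your additional verification of that inclusion via $\lambda\geq 1$ (which is indeed guaranteed by the stopping-time setup of Section~\ref{section 3}) is a welcome clarification, and your observation that the hypothesis on $\theta$ is inert here is also accurate.
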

\begin{proof}
    By \eqref{cond : p-phase condition}$_1$, there exists a positive constant $c=c(K)$ such that
    \begin{align*}
        &\miint{Q_\sigma^\lambda(z_0)} \left[|Du|^{p-1}+a(z)|Du|^{q-1}\right] dz\\
        &\quad\quad\leq \miint{Q_\sigma^\lambda(z_0)} |Du|^{p-1}\, dz+\sup_{Q_{10\rho}(z_0)} a(\cdot)^{\frac{1}{q}}\miint{Q_\sigma^\lambda(z_0)} a(z)^{\frac{q-1}{q}}|Du|^{q-1}\, dz\\
        &\quad\quad\leq \miint{Q_\sigma^\lambda(z_0)} |Du|^{p-1}\, dz+ c\lambda^{-1+\frac{p}{q}}\miint{Q_\sigma^\lambda(z_0)} a(z)^{\frac{q-1}{q}}|Du|^{q-1}\, dz.
    \end{align*}
\end{proof}
Next, we establish a $p$-intrinsic parabolic Poincar\'{e} inequality.
\begin{lemma}\label{lem : p-intrinsic parabolic Poincare inequality of p-term in p-intrinsic cylinder}
    Let $u$ be a weak solution to \eqref{eq : main equation} and assume that $Q_{4\rho}^\lambda(z_0)\subset \Omega_T$ satisfies \eqref{cond : p-phase condition}. Then, for $\sigma\in[2\rho,4\rho]$ and $\theta \in (\frac{q-1}{p},1]$, there exists a positive constant $c=c(\operatorname{data})$ such that
    \begin{align*}
        \miint{Q_\sigma^\lambda(z_0)}\frac{\Big|u-u_{Q_\sigma^\lambda(z_0)}\Big|^{\theta p}}{\sigma^{\theta p}}\, dz\leq c\miint{Q_\sigma^\lambda(z_0)} H(z,|Du|)^\theta\, dz.
    \end{align*}
\end{lemma}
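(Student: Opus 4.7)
My plan is to apply Lemma~\ref{lem : semi-Parabolic Poincare inequality} with $m=p$ on the $p$-intrinsic cylinder $Q_\sigma^\lambda(z_0)$; this is admissible since $p\in(1,q]$ and the hypothesis $\theta>(q-1)/p>1/p$ places $\theta$ in the required range. Noting that $\ell/\sigma^2=\lambda^{2-p}$ for a $p$-intrinsic cylinder, the lemma yields
\[
\miint{Q_\sigma^\lambda(z_0)}\frac{\bigl|u-u_{Q_\sigma^\lambda(z_0)}\bigr|^{\theta p}}{\sigma^{\theta p}}\,dz
\leq c\miint{Q_\sigma^\lambda(z_0)}|Du|^{\theta p}\,dz
+c\Bigl(\lambda^{2-p}\miint{Q_\sigma^\lambda(z_0)}\bigl[|Du|^{p-1}+a(z)|Du|^{q-1}\bigr]\,dz\Bigr)^{\theta p}.
\]
The first term on the right is immediately absorbed into $c\miint{Q_\sigma^\lambda(z_0)} H(z,|Du|)^\theta\,dz$ via $|Du|^p\leq H(z,|Du|)$.

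For the second term, I will split the bracketed integral through Lemma~\ref{lem : last term estimate in Lemma lem : semi-Parabolic Poincare inequality whenever p-intrinsic cylinder} into a pure $p$-piece involving $\miint|Du|^{p-1}\,dz$ and a weighted $(p,q)$-piece involving $c\lambda^{-1+p/q}\miint a^{(q-1)/q}|Du|^{q-1}\,dz$; these two pieces will be estimated separately by H\"older combined with the intrinsic relation $\miint H(z,|Du|)\,dz\leq\lambda^p$ coming from \eqref{cond : p-phase condition}$_2$. For the $p$-piece I will use $\miint|Du|^{p-1}\,dz\leq\bigl(\miint|Du|^{\theta p}\,dz\bigr)^{(p-1)/(\theta p)}$ (valid since $\theta>(p-1)/p$, implied by $\theta>(q-1)/p$), raise to the power $\theta p$, extract one copy of $\miint|Du|^{\theta p}\,dz$, and bound the residual factor $\bigl(\miint|Du|^{\theta p}\,dz\bigr)^{p-2}$ by $\lambda^{\theta p(p-2)}$ via Jensen's inequality together with the intrinsic bound. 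The two $\lambda$-exponents $\theta p(2-p)$ and $\theta p(p-2)$ cancel exactly, yielding the estimate $c\miint|Du|^{\theta p}\,dz\leq c\miint H^\theta\,dz$ for this piece.

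For the weighted $(p,q)$-piece I will use the identity $a^{(q-1)/q}|Du|^{q-1}=(a|Du|^q)^{(q-1)/q}$ together with H\"older (valid since $\theta>(q-1)/q$, implied by $\theta>(q-1)/p$) to obtain $\miint(a|Du|^q)^{(q-1)/q}\,dz\leq\bigl(\miint(a|Du|^q)^\theta\,dz\bigr)^{(q-1)/(\theta q)}$. Raising to the power $\theta p$ produces a $\lambda$-factor $\lambda^{\theta p(1-p+p/q)}$ and an integral factor $\bigl(\miint(a|Du|^q)^\theta\,dz\bigr)^{p(q-1)/q}$; since $p(q-1)/q\geq 1$ (equivalent to $(p-1)(q-1)\geq 1$, which holds for $p,q\geq 2$), I extract one copy of $\miint(a|Du|^q)^\theta\,dz$ and control the remaining $(p(q-1)/q-1)$-th power by $\lambda^{\theta p(p(q-1)/q-1)}$ via Jensen and the intrinsic bound. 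The total $\lambda$-exponent $\theta p\bigl[(1-p+p/q)+(p(q-1)/q-1)\bigr]=0$ cancels, and the estimate reduces to $c\miint(a|Du|^q)^\theta\,dz\leq c\miint H^\theta\,dz$.

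The main hurdle is the exponent bookkeeping, namely verifying that the powers of $\lambda$ introduced by H\"older are cancelled by those arising when Jensen is applied to the intrinsic bound $\miint H\leq\lambda^p$. Without these cancellations the estimate would only yield a bound by $c(\miint H)^\theta=c\lambda^{\theta p}$, which is strictly larger than $c\miint H^\theta$ (by Jensen for $\theta\leq 1$) and therefore insufficient; the $p$-phase condition $\sup a\leq K\lambda^{p-q}$ built into Lemma~\ref{lem : last term estimate in Lemma lem : semi-Parabolic Poincare inequality whenever p-intrinsic cylinder} is precisely what makes the $(p,q)$-piece scale correctly with $\lambda$ so that the cancellation occurs.
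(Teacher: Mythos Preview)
Your proposal is correct and follows essentially the same approach as the paper: both apply Lemma~\ref{lem : semi-Parabolic Poincare inequality} together with Lemma~\ref{lem : last term estimate in Lemma lem : semi-Parabolic Poincare inequality whenever p-intrinsic cylinder}, then estimate the $p$-piece and the weighted $(p,q)$-piece separately via H\"older (raising $|Du|^{p-1}$ to $|Du|^{\theta p}$ and $a^{(q-1)/q}|Du|^{q-1}$ to $a^\theta|Du|^{\theta q}$), extract one copy of the $\theta$-integral, and absorb the residual power through Jensen and the intrinsic bound \eqref{cond : p-phase condition}$_2$, so that the $\lambda$-exponents cancel exactly as you computed.
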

\begin{proof}
    By Lemmas \ref{lem : semi-Parabolic Poincare inequality} and \ref{lem : last term estimate in Lemma lem : semi-Parabolic Poincare inequality whenever p-intrinsic cylinder}, there exists a positive constant $c=c(\operatorname{data})$ such that 
    \begin{align*}
        \miint{Q_\sigma^\lambda(z_0)}\frac{\Big|u-u_{Q_\sigma^\lambda(z_0)}\Big|^{\theta p}}{\sigma^{\theta p}}\, dz &\leq  c \miint{Q_\sigma^\lambda(z_0)}|D u|^{\theta p} \,dz\\
        &\quad +c\left(\lambda^{2-p}\miint{Q_\sigma^\lambda(z_0)} |Du|^{p-1}\, dz\right)^{\theta p}\\
        &\quad +c\left(\lambda^{1-p+\frac{p}{q}}\miint{Q_\sigma^\lambda(z_0)} a(z)^{\frac{q-1}{q}}|Du|^{q-1}\, dz\right)^{\theta p}.
    \end{align*}
    We conclude from H\"{o}lder's inequality and \eqref{cond : p-phase condition} that
    \begin{align*}
        &\miint{Q_\sigma^\lambda(z_0)}\frac{\Big|u-u_{Q_\sigma^\lambda(z_0)}\Big|^{\theta p}}{\sigma^{\theta p}}\, dz\\ 
        &\quad\leq  c \miint{Q_\sigma^\lambda(z_0)}|D u|^{\theta p} \,dz\\
        &\qquad +c\lambda^{(2-p)\theta p}\left(\miint{Q_\sigma^\lambda(z_0)} |Du|^{\theta p}\, dz\right)^{p-1}\\
        &\qquad +c\lambda^{\left(1-p+\frac{p}{q}\right)\theta p}\left(\miint{Q_\sigma^\lambda(z_0)} a(z)^\theta|Du|^{\theta q}\, dz\right)^{p-\frac{p}{q}}\\
        &\quad\leq  c \miint{Q_\sigma^\lambda(z_0)}|D u|^{\theta p} \,dz\\
        &\qquad +c\lambda^{(2-p)\theta p}\left(\miint{Q_\sigma^\lambda(z_0)} |Du|^{\theta p}\, dz\right)\left(\miint{Q_\sigma^\lambda(z_0)} |Du|^{p}\, dz\right)^{(p-2)\theta}\\
        &\qquad +c\lambda^{\left(1-p+\frac{p}{q}\right)\theta p}\left(\miint{Q_\sigma^\lambda(z_0)} a(z)^\theta|Du|^{\theta q}\, dz\right)\left(\miint{Q_\sigma^\lambda(z_0)} a(z)|Du|^{q}\, dz\right)^{\left(-1+p-\frac{p}{q}\right)\theta}\\
        &\quad\leq  c \miint{Q_\sigma^\lambda(z_0)}H(z,|Du|)^{\theta} \,dz.
    \end{align*}
\end{proof}
\begin{lemma}\label{lem : p-intrinsic parabolic Poincare inequality of q-term in p-intrinsic cylinder}
    Let $u$ be a weak solution to \eqref{eq : main equation} and assume that $Q_{4\rho}^\lambda(z_0)\subset \Omega_T$ satisfies \eqref{cond : p-phase condition}. Then, for $\sigma\in[2\rho,4\rho]$ and $\theta \in (\frac{q-1}{p},1]$, there exists a positive constant $c=c(\operatorname{data})$ such that
    \begin{align*}
        \miint{Q_\sigma^\lambda(z_0)}\inf_{w\in Q_\sigma^\lambda(z_0)}a(w)^\theta\frac{\Big|u-u_{Q_\sigma^\lambda(z_0)}\Big|^{\theta q}}{\sigma^{\theta q}}\, dz\leq c\miint{Q_\sigma^\lambda(z_0)} H(z,|Du|)^\theta\, dz.
    \end{align*}
\end{lemma}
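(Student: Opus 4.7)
The plan is to follow the template of Lemma \ref{lem : p-intrinsic parabolic Poincare inequality of p-term in p-intrinsic cylinder}, but to apply the semi-parabolic Poincar\'{e} inequality (Lemma \ref{lem : semi-Parabolic Poincare inequality}) at exponent $m=q$ rather than $m=p$, and then to absorb the extra prefactor $\inf_{w\in Q_\sigma^\lambda(z_0)}a(w)^\theta$ by means of the $p$-phase condition. Since $z_0$ is the common center of $Q_\sigma^\lambda(z_0)$ and $Q_{10\rho}(z_0)$, the bound $\sup_{Q_{10\rho}(z_0)}a(\cdot)\leq K\lambda^{p-q}$ forced by \eqref{cond : p-phase condition}$_1$ yields
$$
\inf_{w\in Q_\sigma^\lambda(z_0)}a(w)^\theta\leq a(z_0)^\theta\leq K^\theta\lambda^{(p-q)\theta}.
$$
This single identity drives the whole argument: it trades the factor $\inf a^\theta$ for $\lambda^{(p-q)\theta}$, which is exactly the power needed to balance the $\lambda$-factors produced by Lemmas \ref{lem : semi-Parabolic Poincare inequality} and \ref{lem : last term estimate in Lemma lem : semi-Parabolic Poincare inequality whenever p-intrinsic cylinder}.

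Applying Lemma \ref{lem : semi-Parabolic Poincare inequality} with $m=q$ (admissible because $\theta>(q-1)/p>1/q$) on $Q_\sigma^\lambda(z_0)$, whose time scale is $\ell=\lambda^{2-p}\sigma^2$, and splitting the gluing term by Lemma \ref{lem : last term estimate in Lemma lem : semi-Parabolic Poincare inequality whenever p-intrinsic cylinder}, I get three terms to control after multiplying through by $\inf_w a(w)^\theta$. The first, containing $\miint{Q_\sigma^\lambda(z_0)}|Du|^{\theta q}\,dz$, is immediate: since $\inf_w a(w)\leq a(z)$ pointwise, it is dominated by $c\miint{Q_\sigma^\lambda(z_0)}H(z,|Du|)^\theta\,dz$. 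For the second, two Jensen steps apply: first $\miint{Q_\sigma^\lambda(z_0)}|Du|^{p-1}\,dz\leq\bigl(\miint{Q_\sigma^\lambda(z_0)}|Du|^{\theta p}\,dz\bigr)^{(p-1)/(\theta p)}$, which is valid since $\theta p>p-1$; then, after raising to $\theta q$ and splitting off one copy of $\miint{Q_\sigma^\lambda(z_0)}|Du|^{\theta p}\,dz$, the trailing factor is controlled by a further Jensen step and \eqref{cond : p-phase condition}$_3$ to produce $\lambda^{\theta[(p-1)q-p]}$; multiplying by the prefactor $\inf a^\theta\lambda^{(2-p)\theta q}$, the overall $\lambda$-exponent $(p-q)+(2-p)q+(p-1)q-p=0$ vanishes and the contribution becomes $c\miint{Q_\sigma^\lambda(z_0)}|Du|^{\theta p}\,dz\leq c\miint{Q_\sigma^\lambda(z_0)}H(z,|Du|)^\theta\,dz$. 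The third term, $c\inf a^\theta\lambda^{(1-p+p/q)\theta q}\bigl(\miint{Q_\sigma^\lambda(z_0)}a(z)^{(q-1)/q}|Du|^{q-1}\,dz\bigr)^{\theta q}$, is handled identically via the identity $a^{(q-1)/q}|Du|^{q-1}=(a|Du|^q)^{(q-1)/q}$ and Jensen with exponent $\theta q/(q-1)\geq 1$ (valid since $\theta>(q-1)/q$), followed by one further Jensen step plus \eqref{cond : p-phase condition}$_3$ controlling $\miint{Q_\sigma^\lambda(z_0)}a(z)^\theta|Du|^{\theta q}\,dz\leq\lambda^{p\theta}$; the cancellation $(p-q)+(1-p+p/q)q+p(q-2)=0$ again gives the final bound $c\miint{Q_\sigma^\lambda(z_0)}a(z)^\theta|Du|^{\theta q}\,dz\leq c\miint{Q_\sigma^\lambda(z_0)}H(z,|Du|)^\theta\,dz$. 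Summing the three contributions yields the claim.

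The hard part is the $\lambda$-exponent bookkeeping in terms two and three. The two Jensen steps in each are forced rather than free: the first upgrades $|Du|^{p-1}$ (respectively $a^{(q-1)/q}|Du|^{q-1}$) to the scale-invariant target $|Du|^{\theta p}$ (respectively $a^\theta|Du|^{\theta q}$), while the second routes the leftover power through the $p$-phase bound $\miint{Q_\sigma^\lambda(z_0)}H(z,|Du|)\,dz\leq\lambda^p$. The hypothesis $\theta>(q-1)/p$ is precisely what places both Jensen applications in the correct direction ($\theta p>p-1$ and $\theta q>q-1$), while $p\geq 2$ and $q\geq p$ ensure the remainder exponents $(p-1)q/p-1$ and $q-2$ are nonnegative, so that the elementary splitting $X^{r}=X\cdot X^{r-1}$ is legitimate at the integral-average level.
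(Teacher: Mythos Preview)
Your proof is correct and follows essentially the same route as the paper: apply Lemma~\ref{lem : semi-Parabolic Poincare inequality} with $m=q$ together with Lemma~\ref{lem : last term estimate in Lemma lem : semi-Parabolic Poincare inequality whenever p-intrinsic cylinder}, absorb $\inf_w a(w)^\theta$ via $\sup_{Q_{10\rho}(z_0)}a\le K\lambda^{p-q}$, and close each of the three terms by the same H\"older/Jensen splitting and the $p$-phase bound $\miint_{Q_\sigma^\lambda(z_0)}H(z,|Du|)\,dz\le\lambda^p$. One cosmetic point: since $\sigma\in[2\rho,4\rho]$, the integral bound you invoke is \eqref{cond : p-phase condition}$_2$ rather than \eqref{cond : p-phase condition}$_3$.
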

\begin{proof}
    By Lemmas \ref{lem : semi-Parabolic Poincare inequality} and \ref{lem : last term estimate in Lemma lem : semi-Parabolic Poincare inequality whenever p-intrinsic cylinder}, there exists a positive constant $c=c(\operatorname{data})$ such that 
    \begin{align*}
        &\miint{Q_\sigma^\lambda(z_0)}\inf_{w\in Q_\sigma^\lambda(z_0)}a(w)^\theta\frac{\Big|u-u_{Q_\sigma^\lambda(z_0)}\Big|^{\theta q}}{\sigma^{\theta q}}\, dz \\
        &\quad\leq  c \miint{Q_\sigma^\lambda(z_0)} \inf_{w\in Q_\sigma^\lambda(z_0)}a(w)^\theta|D u|^{\theta q} \,dz\\
        &\qquad +c\inf_{w\in Q_\sigma^\lambda(z_0)}a(w)^\theta\left(\lambda^{2-p}\miint{Q_\sigma^\lambda(z_0)} |Du|^{p-1}\, dz\right)^{\theta q}\\
        &\qquad +c\inf_{w\in Q_\sigma^\lambda(z_0)}a(w)^\theta\left(\lambda^{1-p+\frac{p}{q}}\miint{Q_\sigma^\lambda(z_0)} a(z)^{\frac{q-1}{q}}|Du|^{q-1}\, dz\right)^{\theta q}.
    \end{align*}
    By H\"{o}lder's inequality and \eqref{cond : p-phase condition}, we obtain
    \begin{align*}
        &\miint{Q_\sigma^\lambda(z_0)}\inf_{w\in Q_\sigma^\lambda(z_0)}a(w)^\theta\frac{\Big|u-u_{Q_\sigma^\lambda(z_0)}\Big|^{\theta q}}{\sigma^{\theta q}}\, dz\\ 
        &\quad\leq  c \miint{Q_\sigma^\lambda(z_0)} a(z)^\theta|D u|^{\theta q} \,dz\\
        &\qquad +c\lambda^{(p-q)\theta}\lambda^{(2-p)\theta q}\left(\miint{Q_\sigma^\lambda(z_0)} |Du|^{\theta p}\, dz\right)^{q-\frac{q}{p}}\\
        &\qquad +c\lambda^{(p-q)\theta}\lambda^{\left(1-p+\frac{p}{q}\right)\theta q}\left(\miint{Q_\sigma^\lambda(z_0)} a(z)^\theta|Du|^{\theta q}\, dz\right)^{q-1}\\
        &\quad \leq c \miint{Q_\sigma^\lambda(z_0)} H(z,|Du|)^\theta \,dz.
    \end{align*}
\end{proof}
\subsection{\bf The $(p,q)$-phase case}
By the second and third conditions in \eqref{cond : p,q-phase condition}, we have
$$
\miint{G_{4\rho}^\lambda(z_0)} H_{z_0}(|Du|)\,dz<4a(z_0)\lambda^q
$$
and hence,
\begin{equation}    \label{cond : simple p,q-phase condition}
    \miint{G_{4\rho}^\lambda(z_0)} |Du|^q\,dz<4\lambda^q.
\end{equation}

We now establish a $(p,q)$-intrinsic parabolic Poincar\'{e} inequality.
\begin{lemma}
    Let $u$ be a weak solution to \eqref{eq : main equation} and assume that $G_{4\rho}^\lambda(z_0)\subset \Omega_T$ satisfies \eqref{cond : p,q-phase condition}. Then, for $\sigma\in[2\rho,4\rho]$ and $\theta \in (\frac{q-1}{p},1]$, there exists a positive constant $c=c(n,p,q,L)$ such that
    $$
    \miint{G_\sigma^\lambda (z_0)} H^\theta_{z_0}\left(\frac{\Big|u-u_{G_\sigma^\lambda(z_0)}\Big|}{\sigma}\right)\, dz\leq c \miint{G_\sigma^\lambda(z_0)} H_{z_0}^\theta(|Du|)\, dz.
    $$
\end{lemma}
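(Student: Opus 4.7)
The plan is to adapt the proofs of Lemmas \ref{lem : p-intrinsic parabolic Poincare inequality of p-term in p-intrinsic cylinder} and \ref{lem : p-intrinsic parabolic Poincare inequality of q-term in p-intrinsic cylinder} to the $(p,q)$-intrinsic cylinder $G_\sigma^\lambda(z_0)$, whose time length satisfies $\ell/\sigma^2 = \lambda^2/H_{z_0}(\lambda)$. Since $H_{z_0}(\kappa) = \kappa^p+a(z_0)\kappa^q$, I first split the left-hand side into the $\theta p$-part and $a(z_0)^\theta$ times the $\theta q$-part, and treat them separately.

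Next, I apply Lemma \ref{lem : semi-Parabolic Poincare inequality} with $m=p$ to the $\theta p$-part and with $m=q$ to the $\theta q$-part. The leading averages of $|Du|^{\theta p}$ and $a(z_0)^\theta |Du|^{\theta q}$ are immediately dominated by $c\miint{G_\sigma^\lambda(z_0)} H_{z_0}^\theta(|Du|)\,dz$, using the comparability $a(z)\sim a(z_0)$ on $G_{4\rho}^\lambda(z_0)$ supplied by \eqref{cond : p,q-phase condition}$_1$. What remains are the error contributions of the form
$$
\left(\frac{\lambda^2}{H_{z_0}(\lambda)}\miint{G_\sigma^\lambda(z_0)}\left[|Du|^{p-1}+a(z)|Du|^{q-1}\right] dz\right)^{\theta m}, \quad m\in\{p,q\},
$$
and these are the heart of the proof.

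I handle each error as follows. For the $|Du|^{p-1}$ piece, since $\theta p>p-1$, Jensen's inequality gives a factorization of the form $(\text{avg}(|Du|^{p-1}))^{\theta m}\lesssim (\text{avg}(|Du|^{\theta p}))(\text{avg}(|Du|^p))^{\beta}$ with an appropriate exponent $\beta$, exactly parallel to the splitting used in Lemma \ref{lem : p-intrinsic parabolic Poincare inequality of p-term in p-intrinsic cylinder}; here the intrinsic bound $\miint{G_\sigma^\lambda(z_0)}|Du|^p\, dz\leq \miint{G_\sigma^\lambda(z_0)} H_{z_0}(|Du|)\,dz\leq cH_{z_0}(\lambda)$ coming from \eqref{cond : p,q-phase condition}$_2$ and the comparability of $a$ takes the place of the intrinsic bound $\miint|Du|^p\leq \lambda^p$ used in the $p$-intrinsic case. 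For the $a(z)|Du|^{q-1}$ piece, after absorbing a factor $a(z_0)^{1/q}$ using the comparability of $a$, I apply the same Jensen/interpolation argument to $F(z)=a(z)|Du|^q$ as in the proof of Lemma \ref{lem : p-intrinsic parabolic Poincare inequality of p-term in p-intrinsic cylinder} to split the $\theta m$-th power of $\text{avg}(a^{(q-1)/q}|Du|^{q-1})$ into the product of $\text{avg}(a^\theta|Du|^{\theta q})$ and a power of $\text{avg}(a|Du|^q)$, the latter again controlled by $cH_{z_0}(\lambda)$.

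After these substitutions every error contribution reduces to $\lambda^A H_{z_0}(\lambda)^B$ times either $\miint{G_\sigma^\lambda(z_0)}|Du|^{\theta p}\,dz$ or $\miint{G_\sigma^\lambda(z_0)} a(z)^\theta|Du|^{\theta q}\,dz$. For the $|Du|^{p-1}$-contribution the trivial bound $H_{z_0}(\lambda)\geq \lambda^p$ cancels the scalar factor. For the $a(z)|Du|^{q-1}$-contribution a residual factor of the form $(a(z_0)\lambda^{q-p})^{-\theta}$ remains, and this is precisely absorbed by the characterizing inequality of the $(p,q)$-phase, $K\lambda^p\leq a(z_0)\lambda^q$ (up to the factor $2$ from the comparability of $a$), which yields $a(z_0)\lambda^{q-p}\geq K/2$ and bounds the residual by a constant depending on $K$. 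The main obstacle is this last cancellation: unlike the $p$-intrinsic case where the upper bound $\sup a\leq K\lambda^{p-q}$ was the critical ingredient, here the opposite inequality from the $(p,q)$-phase regime is exactly what one needs, and the exponent bookkeeping must be organized so that this quantity emerges in the scalar prefactor with the right sign.
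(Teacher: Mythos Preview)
Your overall strategy is sound, but there is a genuine sign error in the last paragraph. Consider the $|Du|^{p-1}$ sub-piece of the $m=q$ error, i.e.\ the term carrying the prefactor $a(z_0)^\theta$. Following your recipe one arrives at the scalar prefactor
\[
a(z_0)^\theta\,\lambda^{2\theta q}\,H_{z_0}(\lambda)^{-\theta(q/p+1)}.
\]
If you now invoke only $H_{z_0}(\lambda)\geq\lambda^p$, the residual is $(a(z_0)\lambda^{q-p})^{+\theta}$, with a \emph{positive} exponent. In the $(p,q)$-phase the condition $K\lambda^p\leq\sup a\cdot\lambda^q$ gives a \emph{lower} bound $a(z_0)\lambda^{q-p}\geq K/2$, not an upper bound, so this residual is unbounded and the argument breaks at exactly this point. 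The repair is easy: use the other trivial inequality $H_{z_0}(\lambda)\geq a(z_0)\lambda^q$ here instead; then the residual becomes $(a(z_0)\lambda^{q-p})^{-\theta q/p}$, which the phase inequality does control. With this adjustment your four sub-pieces all close, and since $K>1$ one gets $(a(z_0)\lambda^{q-p})^{-\gamma}\leq 2^\gamma$, so the final constant is in fact $c(n,p,q,L)$ and does not depend on $K$ (contrary to your last sentence).

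For comparison, the paper avoids the four-way split altogether: it keeps $|Du|^{p-1}+a(z)|Du|^{q-1}$ together as a multiple of $H'_{z_0}(|Du|)$, bounds the whole error by $H_{z_0}^\theta\bigl(\lambda^{2-p}(\miint|Du|^{q-1})^{(p-1)/(q-1)}\bigr)$, and then uses only the pure $q$-intrinsic information $\miint_{G_{4\rho}^\lambda}|Du|^q<4\lambda^q$ from \eqref{cond : simple p,q-phase condition}. That route never touches the phase-separating inequality $K\lambda^p\leq\sup a\cdot\lambda^q$, which is why the dependence $c=c(n,p,q,L)$ is immediate. Your approach, modelled on Lemmas~\ref{lem : p-intrinsic parabolic Poincare inequality of p-term in p-intrinsic cylinder}--\ref{lem : p-intrinsic parabolic Poincare inequality of q-term in p-intrinsic cylinder}, works too once the one sign error above is fixed, but the bookkeeping is more delicate precisely because different sub-pieces require different lower bounds on $H_{z_0}(\lambda)$.
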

\begin{proof}
    We only sketch the proof. For a detailed proof, see \cite[Lemma 3.4]{2023_Gradient_Higher_Integrability_for_Degenerate_Parabolic_Double-Phase_Systems}.

    By Lemma \ref{lem : semi-Parabolic Poincare inequality}, \eqref{cond : p,q-phase condition} and \eqref{cond : simple p,q-phase condition}, there exists a positive constant $c=c(n,p,q,L)$ such that
    $$
    \begin{aligned}
        &\miint{G_\sigma^\lambda(z_0)} H_{z_0}^\theta\left(\frac{\Big|u-u_{G_\sigma^\lambda(z_0)}\Big|}{\sigma}\right)dz\\
        &\quad \leq c\miint{G_\sigma^\lambda(z_0)} H_{z_0}^\theta(|Du|)\, dz+cH^\theta_{z_0}\left(\frac{\lambda}{H'_{z_0}(\lambda)}\miint{G_\sigma^\lambda(z_0)} H'_{z_0}(|Du|)\,dz\right)\\
        &\quad \leq c\miint{G_\sigma^\lambda(z_0)} H_{z_0}^\theta(|Du|)\, dz+cH^\theta_{z_0}\left(\lambda^{2-p}\left(\miint{G_\sigma^\lambda(z_0)}|Du|^{q-1}\, dz\right)^{\frac{p-1}{q-1}}\right).\\
    \end{aligned}
    $$
    Here, we obtain from H\"{o}lder inequality and \eqref{cond : simple p,q-phase condition} that
    $$
    \begin{aligned}
        &H^\theta_{z_0}\left(\lambda^{2-p}\left(\miint{G_\sigma^\lambda(z_0)}|Du|^{q-1}\, dz\right)^{\frac{p-1}{q-1}}\right)\\
        &\quad\leq c\left(\lambda^{(2-p)p}\left(\miint{G_\sigma^\lambda(z_0)}|Du|^{q-1}\, dz\right)^{\frac{p(p-1)}{q-1}}\right)^\theta\\
        &\qquad + c\left(a(z_0)\lambda^{(2-p)q}\left(\miint{G_\sigma^\lambda(z_0)}|Du|^{q-1}\, dz\right)^{\frac{q(p-1)}{q-1}}\right)^\theta\\
        &\quad \leq c\miint{G_\sigma^\lambda(z_0)}H^\theta_{z_0}(|Du|)\, dz,
    \end{aligned}
    $$
    and the lemma follows.
\end{proof}
According to \cite{2023_Gradient_Higher_Integrability_for_Degenerate_Parabolic_Double-Phase_Systems}, if we replace $H_{z_0}^\theta(\kappa)$ with $\kappa^{\theta p}$, we get the following result.
\begin{lemma}
    Let $u$ be a weak solution to \eqref{eq : main equation} and assume that $G_{4\rho}^\lambda(z_0)\subset \Omega_T$ satisfies \eqref{cond : p,q-phase condition}. Then, for $\sigma\in[2\rho,4\rho]$ and $\theta \in (\frac{q-1}{p},1]$, there exists a positive constant $c=c(n,p,q,L)$ such that
    $$
    \miint{G_\sigma^\lambda(z_0)}\left(\frac{\Big|u-u_{G_\sigma^\lambda(z_0)}\Big|}{\sigma}\right)^{\theta p}\, dz\leq c\miint{G_\sigma^\lambda(z_0)} |Du|^{\theta p}\, dz.
    $$
\end{lemma}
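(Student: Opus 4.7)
The plan is to mirror the preceding lemma, but to invoke Lemma~\ref{lem : semi-Parabolic Poincare inequality} with the choice $m = p$ in place of $m = q$, since the target right-hand side now involves only $|Du|^{\theta p}$. The hypothesis $\theta \in (\frac{q-1}{p}, 1]$ implies $\theta > \frac{p-1}{p}$, so that lemma is applicable with $m = p$ and yields
\begin{align*}
\miint{G_\sigma^\lambda(z_0)}\left(\frac{|u - u_{G_\sigma^\lambda(z_0)}|}{\sigma}\right)^{\theta p} dz
&\leq c \miint{G_\sigma^\lambda(z_0)} |Du|^{\theta p}\,dz \\
&\quad + c\left(\frac{\ell}{\sigma^2}\miint{G_\sigma^\lambda(z_0)}\left[|Du|^{p-1} + a(z)|Du|^{q-1}\right] dz\right)^{\theta p},
\end{align*}
where $\ell = \lambda^2 \sigma^2/H_{z_0}(\lambda)$ is the time half-length of $G_\sigma^\lambda(z_0)$. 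The first term on the right already has the desired form, so the task reduces to absorbing the second term into $c\miint{G_\sigma^\lambda(z_0)} |Du|^{\theta p}\,dz$.

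To do this, I would exploit two complementary bounds on $\ell/\sigma^2$: since $H_{z_0}(\lambda) \geq \lambda^p$ and $H_{z_0}(\lambda) \geq a(z_0)\lambda^q$,
$$
\frac{\ell}{\sigma^2} = \frac{\lambda^2}{H_{z_0}(\lambda)} \leq \lambda^{2-p} \quad \text{and} \quad \frac{\ell}{\sigma^2} \leq \frac{\lambda^{2-q}}{a(z_0)}.
$$
The former is applied to the $|Du|^{p-1}$ piece; the latter, combined with $a(z) \leq 2a(z_0)$ on $G_{4\rho}^\lambda(z_0)$ from \eqref{cond : p,q-phase condition}$_1$, is applied to the $a(z)|Du|^{q-1}$ piece, so that the factor $a(z_0)$ cancels. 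Since $\theta p \geq q - 1 > p - 1$, Hölder's inequality furnishes
$$
\miint{G_\sigma^\lambda(z_0)} |Du|^{p-1}\,dz \leq \left(\miint{G_\sigma^\lambda(z_0)} |Du|^{\theta p}\,dz\right)^{(p-1)/(\theta p)},
$$
and analogously for $|Du|^{q-1}$ with exponent $(q-1)/(\theta p)$. Finally, \eqref{cond : simple p,q-phase condition} together with Jensen's inequality (valid since $\theta p \leq q$) yields the a priori bound $\miint{G_\sigma^\lambda(z_0)} |Du|^{\theta p}\,dz \leq c \lambda^{\theta p}$, which will be used to convert all but one copy of the averaged integral back into powers of $\lambda$.

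After these substitutions, the $p$-growth term contributes $c\lambda^{(2-p)\theta p}\left(\miint{G_\sigma^\lambda(z_0)} |Du|^{\theta p}\,dz\right)^{p-1}$ and the $q$-growth term contributes $c\lambda^{(2-q)\theta p}\left(\miint{G_\sigma^\lambda(z_0)} |Du|^{\theta p}\,dz\right)^{q-1}$; the identities $(2-p)\theta p + (p-2)\theta p = 0$ and $(2-q)\theta p + (q-2)\theta p = 0$ then guarantee that the residual $\lambda$-powers cancel exactly, yielding the conclusion. I expect the main obstacle to be precisely this exponent bookkeeping, together with the realization that the $q$-growth term must be treated with the sharper bound $\ell/\sigma^2 \leq \lambda^{2-q}/a(z_0)$ (so that $a(z_0)$ cancels) rather than the cruder $\ell/\sigma^2 \leq \lambda^{2-p}$; once the correct pairing of intrinsic bounds with growth terms is identified, the rest is routine Hölder--Jensen manipulation.
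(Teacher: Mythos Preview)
Your proposal is correct and follows essentially the route the paper indicates: the paper does not give a proof but merely states that the result follows by replacing $H_{z_0}^\theta(\kappa)$ with $\kappa^{\theta p}$ in the argument of the preceding lemma, and your write-up is precisely a clean realization of that replacement (Lemma~\ref{lem : semi-Parabolic Poincare inequality} with $m=p$, the intrinsic bounds $\ell/\sigma^2\le\lambda^{2-p}$ and $\ell/\sigma^2\le\lambda^{2-q}/a(z_0)$ paired with the $p{-}1$ and $q{-}1$ terms respectively, then H\"older and \eqref{cond : simple p,q-phase condition}). The only cosmetic difference is that the paper's sketch for the $H_{z_0}^\theta$ version packages the two growth terms via $H'_{z_0}$ before estimating, whereas you split them from the outset; the content is the same.
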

\section{\bf Reverse H\"{o}lder inequalities}\label{section 5}
In this section, we establish reverse H\"{o}lder inequalities separately in each intrinsic cylinder. For this, we need the following auxiliary lemmas, called the Gagliardo-Nirenberg inequality and a standard iteration lemma, see \cite[Lemma 2.12]{Hasto_2021} and \cite[Lemma 6.1]{2003_Giusti_Direct_methods_in_the_calculus_of_variations}, respectively. 
\begin{lemma}   \label{lem : Gagliardo-Nirenberg inequality}
    For an open ball $B_{\rho}(x_0)\subset \mr^n$, take $p_1,\,p_2,\,p_3\in[1,\infty)$, $\vartheta\in(0,1)$ and let $\psi\in W^{1,p_2}(B)$. Suppose that
    $$
    -\frac{n}{p_1}\leq \vartheta\left(1-\frac{n}{p_2}\right)-(1-\vartheta)\frac{n}{p_3}.
    $$
    Then there exists a positive constant $c=c(n,p_1)$ such that
    $$
    \dashint_B \frac{|\psi|^{p_1}}{\rho^{p_1}}\, dx\leq c \left(\dashint_{B_\rho(x_0)}\left[\frac{|\psi|^{p_2}}{\rho^{p_2}}+|D\psi|^{p_2}\right]dx\right)^{\frac{\vartheta p_1}{p_2}}\left(\dashint_{B_{\rho}(x_0)}\frac{|\psi|^{p_3}}{\rho^{p_3}}\,dx\right)^{\frac{(1-\vartheta)p_1}{p_3}}.
    $$
\end{lemma}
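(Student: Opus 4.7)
The plan is to reduce to the unit ball by scaling, invoke the classical Gagliardo-Nirenberg inequality on $\mr^n$ after extending $\psi$, and then use H\"older's inequality to cover the case where the exponent relation is strict rather than critical.

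First, I would introduce the rescaled function $\tilde{\psi}(y)=\rho^{-1}\psi(x_0+\rho y)$ defined on $B_1(0)$. A direct change of variables gives the identities
$$
\dashint_{B_\rho(x_0)}\tfrac{|\psi|^{p_i}}{\rho^{p_i}}\,dx=\dashint_{B_1}|\tilde{\psi}|^{p_i}\,dy\quad\text{for }i\in\{1,3\},\qquad \dashint_{B_\rho(x_0)}\Bigl[\tfrac{|\psi|^{p_2}}{\rho^{p_2}}+|D\psi|^{p_2}\Bigr]dx=\dashint_{B_1}\bigl[|\tilde{\psi}|^{p_2}+|D\tilde{\psi}|^{p_2}\bigr]dy,
$$
so it suffices to prove the scale-free inequality $\|\tilde{\psi}\|_{L^{p_1}(B_1)}\leq c\bigl(\|\tilde{\psi}\|_{L^{p_2}(B_1)}+\|D\tilde{\psi}\|_{L^{p_2}(B_1)}\bigr)^{\vartheta}\|\tilde{\psi}\|_{L^{p_3}(B_1)}^{1-\vartheta}$ on the unit ball.

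Next, I would first handle the critical case $\tfrac{1}{p_1}=\vartheta(\tfrac{1}{p_2}-\tfrac{1}{n})+(1-\vartheta)\tfrac{1}{p_3}$. Using a Stein-type extension operator $E\colon W^{1,p_2}(B_1)\to W^{1,p_2}(\mr^n)$ that is simultaneously bounded from $L^{p_3}(B_1)$ into $L^{p_3}(\mr^n)$, the classical Gagliardo-Nirenberg inequality on $\mr^n$ applied to $E\tilde{\psi}$ and restricted to $B_1$ yields the desired interpolation. For the strict case $\tfrac{1}{p_1}>\vartheta(\tfrac{1}{p_2}-\tfrac{1}{n})+(1-\vartheta)\tfrac{1}{p_3}$, I would pick a larger $p_1^\star$ for which the critical identity holds and use H\"older's inequality on the bounded domain $B_1$, namely $\|\tilde{\psi}\|_{L^{p_1}(B_1)}\leq c(n,p_1)\|\tilde{\psi}\|_{L^{p_1^\star}(B_1)}$, to reduce to the critical case.

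The main obstacle, I expect, is justifying the stated dependence $c=c(n,p_1)$: a direct reading of the extension step produces a constant also depending on $p_2,p_3$, so one must either absorb this dependence through the exponent relation satisfied by $\vartheta$, or track the proof of the standard Gagliardo-Nirenberg estimate through convolution with mollifiers and the Riesz potential bound to check that the resulting constant is controlled in terms of $n$ and $p_1$ alone. Once this is clear, combining the scaling reduction, the classical interpolation on $\mr^n$, and the H\"older step completes the argument.
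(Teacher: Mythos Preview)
The paper does not give its own proof of this lemma: it is stated as an auxiliary result and simply cited from \cite[Lemma~2.12]{Hasto_2021}. So there is no proof in the paper to compare against.

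Your proposed route---rescale to the unit ball, extend to $\mr^n$ via the Stein extension operator, apply the classical Gagliardo--Nirenberg inequality on $\mr^n$, and use H\"older's inequality on $B_1$ to pass from the critical exponent $p_1^\star$ to any smaller $p_1$---is the standard argument and is correct. The only genuine subtlety you flag yourself: the stated dependence $c=c(n,p_1)$ is optimistic, since both the extension step and the classical Gagliardo--Nirenberg constant on $\mr^n$ typically bring in $p_2,p_3,\vartheta$ as well. For the paper's purposes this is harmless (in every application the exponents are fixed in terms of $n,p,q,s$), so you need not chase the sharp constant dependence; but if you want a self-contained proof you should either relax the dependence to $c=c(n,p_1,p_2,p_3)$ or point to a reference where the sharper claim is verified.
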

\begin{lemma}   \label{lem : a standard iteration lemma}
    Let $0<\rho<\tau<\infty$, and let $g:[\rho,\tau]\rightarrow [0,\infty)$ be a bounded function. Suppose that
    $$
    g(\rho_1)\leq \vartheta g(\rho_2)+\frac{A}{(\rho_2-\rho_1)^\gamma}+B
    $$
    holds for all $0<\rho\leq \rho_1<\rho_2\leq \tau$, where $\vartheta \in (0,1)$, $A,B\geq 0$ and $\gamma>0$. Then there exists a positive constant $c$ depending on $\vartheta$ and $\gamma$ such that
    $$
    g(\rho)\leq c\left(\frac{A}{(\tau-\rho)^{\gamma}}+B\right).
    $$
\end{lemma}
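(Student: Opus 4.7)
This is the classical iteration (hole-filling) lemma, so the strategy is the standard geometric iteration. Introduce a parameter $\eta \in (0,1)$ to be fixed at the end and define an increasing sequence in $[\rho,\tau]$ by
$$
\rho_0 := \rho, \qquad \rho_{i+1} - \rho_i := (1-\eta)\eta^i(\tau-\rho).
$$
A direct computation gives $\rho_k = \rho + (1-\eta^k)(\tau-\rho)$, so every $\rho_k$ lies strictly in $[\rho,\tau)$ and $\rho_k \nearrow \tau$. Consequently, the hypothesis may legitimately be applied to each consecutive pair $(\rho_i,\rho_{i+1})$.

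Iterating the assumed inequality $k$ times and inserting the geometric spacing yields
$$
g(\rho) \leq \vartheta^k g(\rho_k) + \frac{A}{(1-\eta)^\gamma(\tau-\rho)^\gamma} \sum_{i=0}^{k-1} \left(\frac{\vartheta}{\eta^\gamma}\right)^i + \frac{B}{1-\vartheta}.
$$
The quantitative heart of the argument is then to calibrate $\eta$ so that $\vartheta\eta^{-\gamma}<1$, which is possible for every $\vartheta\in(0,1)$; for concreteness $\eta=(\tfrac{1+\vartheta}{2})^{1/\gamma}$ works. With such a choice both geometric sums above converge, and the boundedness of $g$ is used only to guarantee $\vartheta^k g(\rho_k)\to 0$ as $k\to\infty$. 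Passing to the limit gives the claimed bound with a constant $c=c(\vartheta,\gamma)$.

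The whole argument is essentially bookkeeping; the only real subtlety is the trade-off in choosing $\eta$: taking $\eta$ too close to $1$ blows up the prefactor $(1-\eta)^{-\gamma}$, whereas taking $\eta\leq \vartheta^{1/\gamma}$ causes the series $\sum_i(\vartheta\eta^{-\gamma})^i$ to diverge. Any fixed value strictly between $\vartheta^{1/\gamma}$ and $1$ resolves this, and no regularity of $g$ beyond the stated boundedness is needed — which is precisely why the lemma can be stated in this generality and is immediately applicable in the Caccioppoli-type context of Lemmas \ref{lem : p-intrinsic parabolic Poincare inequality of p-term in p-intrinsic cylinder}–\ref{lem : p-intrinsic parabolic Poincare inequality of q-term in p-intrinsic cylinder} and the reverse H\"older estimates of Section \ref{section 5}.
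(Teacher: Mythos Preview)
Your argument is correct and is precisely the standard geometric iteration proof; the paper itself does not supply a proof but merely cites \cite[Lemma~6.1]{2003_Giusti_Direct_methods_in_the_calculus_of_variations}, where exactly this argument appears. There is nothing to add.
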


\subsection{The $p$-phase case.}
In this case, we consider the $p$-intrinsic cylinder denoted in \eqref{eq : definition of p-intrinsic cylinder}. We write
$$
S(u,Q_{\rho}^\lambda(z_0))=\sup_{I_\rho^\lambda(t_0)}\dashint_{B_\rho(x_0)}\frac{\Big|u-u_{Q_\rho^\lambda(z_0)}\Big|^2}{\rho^2}\, dx.
$$
\begin{lemma}   \label{lem : estimation of S in p-intrinsic cylinder}
    Let $u$ be a weak solution to \eqref{eq : main equation} with \eqref{cond : main assumption with infty} or \eqref{cond : main assumption with s}. Then there exists a constant $c=c(\operatorname{data})>1$ such that
    $$
    S(u,Q_{2\rho}^\lambda(z_0))\leq c \lambda^2,
    $$
    whenever $Q_{2\kappa\rho}^\lambda(z_0)\subset \Omega_T$ satisfies \eqref{cond : p-phase condition}.
\end{lemma}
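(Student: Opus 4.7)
The plan is to apply Lemma \ref{lem : Caccioppoli inequality} on the $p$-intrinsic cylinders $Q_{2\rho}^\lambda(z_0) \subset Q_{4\rho}^\lambda(z_0)$, multiply through by $\lambda^{2-p}$ so that the left-hand side becomes $S(u,Q_{2\rho}^\lambda(z_0))$, and then bound each of the three resulting integrals on the right-hand side by $c\lambda^2$ using the intrinsic parabolic Poincar\'e inequalities of Section \ref{section 4}, Lemma \ref{lem : semi-Parabolic Poincare inequality}, the H\"older continuity of $a(\cdot)$, and, in the case \eqref{cond : main assumption with s}, Lemma \ref{lem : Gagliardo-Nirenberg inequality} together with Lemma \ref{lem : a standard iteration lemma}.

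Concretely, choosing $r = 2\rho$, $R = 4\rho$, $\tau = \lambda^{2-p}(2\rho)^2$ and $\ell = \lambda^{2-p}(4\rho)^2$ in Lemma \ref{lem : Caccioppoli inequality}, so that $\ell - \tau = 12\lambda^{2-p}\rho^2$, multiplication by $\lambda^{2-p}$ produces
$$
S(u,Q_{2\rho}^\lambda(z_0)) \leq c\lambda^{2-p}\miint{Q_{4\rho}^\lambda(z_0)}\!\left[\frac{|u - u_{Q_{4\rho}^\lambda(z_0)}|^p}{\rho^p} + a(z)\frac{|u - u_{Q_{4\rho}^\lambda(z_0)}|^q}{\rho^q}\right]dz + c\miint{Q_{4\rho}^\lambda(z_0)}\frac{|u - u_{Q_{4\rho}^\lambda(z_0)}|^2}{\rho^2}\,dz.
$$
The first integral is controlled by Lemma \ref{lem : p-intrinsic parabolic Poincare inequality of p-term in p-intrinsic cylinder} with $\theta = 1$ together with \eqref{cond : p-phase condition}$_2$, yielding a contribution $\leq c\lambda^2$. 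The last integral is handled via Lemma \ref{lem : semi-Parabolic Poincare inequality} with $m = 2$, $\theta = 1$: the $|Du|^2$ piece is bounded by H\"older's inequality and \eqref{cond : p-phase condition}$_2$ by $c\lambda^2$, while the second summand in Lemma \ref{lem : semi-Parabolic Poincare inequality} is bounded, through Lemma \ref{lem : last term estimate in Lemma lem : semi-Parabolic Poincare inequality whenever p-intrinsic cylinder}, the bound $\sup a(\cdot) \leq K\lambda^{p-q}$ from \eqref{cond : p-phase condition}$_1$, and H\"older's inequality applied to \eqref{cond : p-phase condition}$_2$, again by $c\lambda^2$.

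For the $q$-term, decompose $a(z) = \inf_{Q_{4\rho}^\lambda(z_0)} a(\cdot) + \bigl(a(z) - \inf_{Q_{4\rho}^\lambda(z_0)} a(\cdot)\bigr)$. The $\inf a$ contribution is bounded by Lemma \ref{lem : p-intrinsic parabolic Poincare inequality of q-term in p-intrinsic cylinder} with $\theta = 1$ and \eqref{cond : p-phase condition}$_2$ by $c\lambda^p$. For the oscillation part, the H\"older continuity of $a(\cdot)$ combined with the lower bound $\lambda \geq c(\operatorname{data}) > 0$ (which follows from $\Lambda \leq (K+1)\lambda^p$ in the $p$-phase and from $\Lambda \geq \Lambda_0 \geq 1$, owing to the choice of $\Lambda$ in Section \ref{section 3}) controls the parabolic diameter of $Q_{4\rho}^\lambda(z_0)$ and yields $\sup a(\cdot) - \inf a(\cdot) \leq c\rho^\alpha$ on $Q_{4\rho}^\lambda(z_0)$. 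In the bounded case \eqref{cond : main assumption with infty}, the pointwise bound $|u - u_{Q_{4\rho}^\lambda(z_0)}|^{q-p} \leq (2\|u\|_{L^\infty})^{q-p}$ reduces, via Lemma \ref{lem : p-intrinsic parabolic Poincare inequality of p-term in p-intrinsic cylinder}, $\miint{Q_{4\rho}^\lambda(z_0)} |u - u_{Q_{4\rho}^\lambda(z_0)}|^q/\rho^q\,dz$ to $c\rho^{p-q}\lambda^p$; multiplying by $\rho^\alpha$ gives $c\rho^{\alpha + p - q}\lambda^p \leq c\lambda^p$ thanks to the gap bound $\alpha \geq q - p$ in \eqref{cond : main assumption with infty}$_3$. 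In the case \eqref{cond : main assumption with s}, one decomposes $u - u_{Q_{4\rho}^\lambda(z_0)} = (u - (u)_{B_{4\rho}(x_0)}(t)) + ((u)_{B_{4\rho}(x_0)}(t) - u_{Q_{4\rho}^\lambda(z_0)})$, applies Lemma \ref{lem : Gagliardo-Nirenberg inequality} with $p_1 = q$, $p_2 = p$, $p_3 = s$ to the first piece (interpolating between $|Du|^p$ and $L^s$ averages of $u$) and the gluing lemma to the second, then integrates in time against the $C(0, T; L^s(\Omega))$ bound on $u$; Young's inequality produces a self-referencing term $\epsilon S(u, Q_{\varrho_2}^\lambda(z_0))$ on nested cylinders, which is absorbed via Lemma \ref{lem : a standard iteration lemma} applied to $g(\varrho) := S(u, Q_\varrho^\lambda(z_0))$ on $\varrho \in [2\rho, 4\rho]$ to yield $S(u, Q_{2\rho}^\lambda(z_0)) \leq c\lambda^2$.

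The main obstacle is calibrating the Gagliardo-Nirenberg interpolation parameter $\vartheta$ in the unbounded case so that the admissibility condition $-n/q \leq \vartheta(1 - n/p) - (1-\vartheta)n/s$ holds while the induced exponent on $S(u, Q_\varrho^\lambda(z_0))$ remains strictly below $1$, so that Young's inequality yields an absorbable self-reference. This is precisely what forces the subdivision $s \in [2, 4]$ versus $s \in (4, \infty)$ announced in Section \ref{section 1}: in the former subrange a single H\"older step in time suffices to invoke the $L^\infty_t L^s_x$ norm of $u$ directly, while in the latter an intermediate H\"older step in time with a different exponent matching is required before the $C(0, T; L^s(\Omega))$ bound can be brought in.
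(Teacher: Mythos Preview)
Your overall skeleton is correct, and your treatment of the quadratic term actually \emph{simplifies} the paper's argument: bounding $\miint_{Q_{4\rho}^\lambda}|u-u_Q|^2/\rho^2$ directly via Lemma~\ref{lem : semi-Parabolic Poincare inequality} with $m=2$, $\theta=1$, together with Lemma~\ref{lem : last term estimate in Lemma lem : semi-Parabolic Poincare inequality whenever p-intrinsic cylinder} and H\"older against \eqref{cond : p-phase condition}$_2$, does yield $\leq c\lambda^2$ outright, with no self-reference. The paper instead estimates this term by $c\lambda\,S(u,Q_{\rho_2}^\lambda)^{1/2}$ (a Gagliardo--Nirenberg interpolation against $p_3=2$, following \cite[Lemma~4.3]{2023_Gradient_Higher_Integrability_for_Degenerate_Parabolic_Double-Phase_Systems}); \emph{that} is what forces the nested radii $2\rho\leq\rho_1<\rho_2\leq4\rho$, the Young step, and the call to Lemma~\ref{lem : a standard iteration lemma}. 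So the iteration lemma is indeed present in the paper's proof---but it originates from the quadratic term, not from the $q$-term.

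Where your proposal goes astray is the oscillation part of the $q$-term under \eqref{cond : main assumption with s}. The paper handles it directly, with no slice decomposition, no gluing lemma, and no self-reference: apply Lemma~\ref{lem : Gagliardo-Nirenberg inequality} with $p_1=q$, $p_2=p$, $p_3=s$, $\vartheta=p/q$ to $|u-u_{Q_{\rho_2}^\lambda}|$ on the whole cylinder, then bound
\[
\rho_2^\alpha\Bigl(\sup_{I_{\rho_2}^\lambda}\dashint_{B_{\rho_2}}\frac{|u-u_Q|^s}{\rho_2^s}\,dx\Bigr)^{\frac{q-p}{s}}
\leq c\,\rho_2^{\alpha-\frac{(q-p)(n+s)}{s}}\|u\|_{L^\infty_tL^s_x}^{q-p}\leq c
\]
by the gap condition in \eqref{cond : main assumption with s}$_3$. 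With $p_3=s$ the Gagliardo--Nirenberg output carries no $S$-factor at all, so there is nothing to absorb; the parameters you quote cannot produce the self-referencing $\epsilon\,S$ term you describe. The splitting $u-(u)_B(t)$ versus $(u)_B(t)-u_Q$ that you propose is the mechanism of Lemma~\ref{lem : no occurence with s<infty}, not of the present lemma, and the subdivision $s\in[2,4]$ versus $s\in(4,\infty)$ you invoke in your last paragraph belongs to Lemma~\ref{lem : estimation of difference of u and mean u in p intrinsic cylinder with s<infty} (where one takes $p_3=2$ or $p_3=\tilde p$ and must keep the exponent on $S$ strictly below one); no such case distinction is needed here. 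There is also an internal inconsistency in your write-up: you begin with the fixed pair $r=2\rho$, $R=4\rho$ but then speak of ``nested cylinders'' and iteration on $[2\rho,4\rho]$---with your own handling of the quadratic term, fixed radii suffice and no iteration is required at all.
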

\begin{proof}
    Let $2\rho\leq\rho_1<\rho_2\leq 4\rho$. By Lemma \ref{lem : Caccioppoli inequality}, there exists a positive constant $c=c(n,p,q,\nu,L)$ such that
    \begin{align}
        &\lambda^{p-2}S(u,Q_{\rho_1}^\lambda(z_0))\nonumber\\
        &\quad \;\leq \frac{c\rho_2^q}{(\rho_2-\rho_1)^q}\miint{Q_{\rho_2}^\lambda (z_0)}\left[\frac{\Big|u-u_{Q_{\rho_2}^\lambda(z_0)}\Big|^p}{\rho_2^p}+a(z)\frac{\Big|u-u_{Q_{\rho_2}^\lambda(z_0)}\Big|^q}{\rho_2^q}\right] dz\nonumber\\\label{eq : estimation of S in p-intrinsic cylinder}
        &\qquad +\frac{c\rho_2^2\lambda^{p-2}}{(\rho_2-\rho_1)^2}\miint{Q_{\rho_2}^\lambda(z_0)}\frac{\Big|u-u_{Q_{\rho_2}^\lambda(z_0)}\Big|^2}{\rho_2^2}\, dz.
    \end{align}
    To estimate the $p$-term in the first term on the right-hand side of \eqref{eq : estimation of S in p-intrinsic cylinder}, we use Lemma \ref{lem : p-intrinsic parabolic Poincare inequality of p-term in p-intrinsic cylinder} with $\theta=1$ and \eqref{cond : p-phase condition}$_2$. Then we have
    \begin{equation}    \label{eq : estimation u^p in p-intrinsic cylinder}
    \miint{Q_{\rho_2}^\lambda(z_0)}\frac{\Big|u-u_{Q_{\rho_2}^\lambda(z_0)}\Big|^p}{\rho_2^p}\,dz\leq c\lambda^p
    \end{equation}
    for some $c=c(\operatorname{data})>1$. Now, we estimate the $q$-term in the first term. Since $a(\cdot) \in C^{\alpha,\frac{\alpha}{2}}(\Omega_T)$, we get
    $$
    \begin{aligned}
        \miint{Q_{\rho_2}^\lambda(z_0)} a(z)\frac{\Big|u-u_{Q_{\rho_2}^\lambda(z_0)}\Big|^q}{\rho_2^q}\, dz &\leq \miint{Q_{\rho_2}^\lambda(z_0)} \inf_{w\in Q_{\rho_2}^\lambda (z_0)} a(w)\frac{\Big|u-u_{Q_{\rho_2}^\lambda (z_0)}\Big|^q}{\rho_2^q}\, dz\\
        &\quad +[a]_\alpha \rho_2^\alpha \miint{Q_{\rho_2}^\lambda(z_0)} \frac{\Big|u-u_{Q_{\rho_2}^\lambda (z_0)}\Big|^q}{\rho_2^q}\, dz. 
    \end{aligned}
    $$
    By Lemma \ref{lem : p-intrinsic parabolic Poincare inequality of q-term in p-intrinsic cylinder} with $\theta = 1$ and \eqref{cond : p-phase condition}$_2$, we obtain
    $$
    \miint{Q_{\rho_2}^\lambda(z_0)} \inf_{w\in Q_{\rho_2}^\lambda (z_0)} a(w)\frac{\Big|u-u_{Q_{\rho_2}^\lambda (z_0)}\Big|^q}{\rho_2^q}\, dz\leq c \lambda^p
    $$
    for some $c=c(\operatorname{data})>1$. The remaining part is proven by dividing into cases, where either \eqref{cond : main assumption with infty} or \eqref{cond : main assumption with s} is satisfied. When \eqref{cond : main assumption with infty} holds, by \eqref{eq : estimation of u-mean of u as u} and \eqref{eq : estimation u^p in p-intrinsic cylinder}, we have
    $$
    \begin{aligned}
        \rho_2^\alpha \miint{Q_{\rho_2}^\lambda(z_0)}\frac{\Big|u-u_{Q_{\rho_2}^\lambda (z_0)}\Big|^q}{\rho_2^q}\, dz &\leq c\rho_2^\alpha \miint{Q_{\rho_2}^\lambda(z_0)}\frac{\Big|u-u_{Q_{\rho_2}^\lambda (z_0)}\Big|^{p}}{\rho_2^p}\cdot \frac{|u|^{q-p}}{\rho_2^{q-p}}\, dz\\
        &\leq c\rho_2^{\alpha+p-q}\|u\|_{L^\infty(\Omega_T)}^{q-p}\miint{Q_{\rho_2}^\lambda(z_0)}\frac{\Big|u-u_{Q_{\rho_2}^\lambda (z_0)}\Big|^{p}}{\rho_2^p}\, dz\\
        &\leq c\lambda^p
    \end{aligned}
    $$
    for some $c=c(\operatorname{data})$. Now, we assume that \eqref{cond : main assumption with s} holds. To estimate, we use Lemma \ref{lem : Gagliardo-Nirenberg inequality} with $p_1=q$, $p_2=p$, $p_3=s$ and $\vartheta=\frac{p}{q}$. For this, we check
    $$
    -\frac{n}{q}\leq \frac{p}{q}\left(1-\frac{n}{p}\right)-\left(1-\frac{p}{q}\right)\frac{n}{s}.
    $$
    Indeed, since $q\leq p +\frac{s\alpha}{n+s} \leq p+\frac{s}{n+s}$ and $2\leq q$, we get $0\leq qs-s\leq ps-qn+pn$, and hence 
    $$
    -\frac{n}{q}\leq -\frac{n}{q}+\frac{ps-qn+pn}{qs}=\frac{p}{q}\left(1-\frac{n}{p}\right)-\left(1-\frac{p}{q}\right)\frac{n}{s}.
    $$
    Thus, by Lemma \ref{lem : Gagliardo-Nirenberg inequality} with $p_1=q$, $p_2=p$, $p_3=s$ and $\vartheta=\frac{p}{q}$, we obtain
    $$
    \begin{aligned}
        &\rho_2^\alpha \miint{Q_{\rho_2}^\lambda(z_0)}\frac{\Big|u-u_{Q_{\rho_2}^\lambda (z_0)}\Big|^q}{\rho_2^q}\, dz\\
        &\qquad \leq c\rho_2^\alpha\left(\miint{Q_{\rho_2}^\lambda(z_0)}\left[\frac{\Big|u-u_{Q_{\rho_2}^\lambda (z_0)}\Big|^p}{\rho_2^p}+|Du|^p\right] dz\right)\\
        &\qquad \qquad \times \left(\sup_{I_{\rho_2}^\lambda(t_0)}\dashint_{B_{\rho_2}(x_0)}\frac{\Big|u-u_{Q_{\rho_2}^\lambda(z_0)}\Big|^s}{\rho_2^s}\,dx\right)^{\frac{q-p}{s}}.
    \end{aligned}
    $$
    By \eqref{eq : estimation of u-mean of u as u} and \eqref{cond : main assumption with s}, we get
    $$
    \begin{aligned}
        &\rho_2^\alpha \left(\sup_{I_{\rho_2}^\lambda(t_0)}\dashint_{B_{\rho_2}(x_0)}\frac{\Big|u-u_{Q_{\rho_2}^\lambda(z_0)}\Big|^s}{\rho_2^s}\,dx\right)^{\frac{q-p}{s}}\leq c\rho_2^\alpha \left(\sup_{I_{\rho_2}^\lambda(t_0)}\dashint_{B_{\rho_2}(x_0)}\frac{|u|^s}{\rho_2^s}\,dx\right)^{\frac{q-p}{s}}\\
        &\qquad \leq c\rho_2^{\alpha-\frac{(q-p)(n+s)}{s}} \left(\sup_{I_{\rho_2}^\lambda(t_0)}\int_{\Omega} |u|^s\,dx\right)^{\frac{q-p}{s}}\leq c
    \end{aligned}
    $$
    for some $c=c(n,p,q,s,\alpha,\operatorname{diam}(\Omega),\|u\|_{L^\infty(0,T;L^s(\Omega))})>1$. Therefore, by \eqref{eq : estimation u^p in p-intrinsic cylinder}, we have
    \begin{equation}\label{eq : estimation u^q in p-intrinsic cylinder}
    \rho_2^\alpha \miint{Q_{\rho_2}^\lambda(z_0)}\frac{\Big|u-u_{Q_{\rho_2}^\lambda (z_0)}\Big|^q}{\rho_2^q}\, dz\leq c \lambda^p
    \end{equation}
    for some $c=c(\operatorname{data})>1$. Lastly, applying the method in \cite[Lemma 4.3]{2023_Gradient_Higher_Integrability_for_Degenerate_Parabolic_Double-Phase_Systems} to the second term on the right-hand side of \eqref{eq : estimation of S in p-intrinsic cylinder} yields
    \begin{equation}    \label{eq : estimation u^2 in p-intrinsic cylinder}
    \miint{Q_{\rho_2}^\lambda(z_0)}\frac{\Big|u-u_{Q_{\rho_2}^\lambda(z_0)}\Big|^2}{\rho_2^2}\, dz\leq c \lambda S(u,Q_{\rho_2}^\lambda(z_0))^{\frac{1}{2}}
    \end{equation}
    for some $c=c(\operatorname{data})>1$. Combining \eqref{eq : estimation of S in p-intrinsic cylinder}, \eqref{eq : estimation u^p in p-intrinsic cylinder}, \eqref{eq : estimation u^q in p-intrinsic cylinder} and \eqref{eq : estimation u^2 in p-intrinsic cylinder} gives
    $$
    S(u,Q_{\rho_1}^\lambda(z_0))\leq \frac{c\rho_2^q}{(\rho_2-\rho_1)^q}\lambda^2+\frac{c\rho_2^2}{(\rho_2-\rho_1)^2}\lambda S(u,Q_{\rho_2}^\lambda(z_0))^{\frac{1}{2}}.
    $$
    Then it follows from Young's inequality that
    $$
    S(u,Q_{\rho_1}^\lambda(z_0))\leq \frac{1}{2}S(u,Q_{\rho_2}^\lambda(z_0))+c\left(\frac{\rho_2^q}{(\rho_2-\rho_1)^q}+\frac{\rho_2^4}{(\rho_2-\rho_1)^4}\right)\lambda^2.
    $$
    Finally, Lemma \ref{lem : a standard iteration lemma} yields the conclusion.
\end{proof}
\begin{lemma}\label{lem : estimation of difference of u and mean u in p intrinsic cylinder with s<infty}
    Let $u$ be a weak solution to \eqref{eq : main equation} with \eqref{cond : main assumption with s}. Then there exist constants $c=c(\operatorname{data}_s)>1$ and $\theta_1=\theta_1(n,p,q,s)\in (0,1)$ such that for any $\theta\in(\theta_1,1)$
    $$
    \begin{aligned}
        &\miint{Q_{2\rho}^\lambda (z_0)}\left[\frac{\Big|u-u_{Q_{2\rho}^\lambda (z_0)}\Big|^p}{(2\rho)^p}+a(z)\frac{\Big|u-u_{Q_{2\rho}^\lambda (z_0)}\Big|^q}{(2\rho)^q}\right] dz\\
        &\quad \leq c\lambda^{(1-\theta)p}\miint{Q_{2\rho}^\lambda (z_0)} \left[\frac{\Big|u-u_{Q_{2\rho}^\lambda (z_0)}\Big|^{\theta p}}{(2\rho)^{\theta p}}+|Du|^{\theta p}\right] dz\\
        &\qquad + c\lambda^{(1-\theta)p}\miint{Q_{2\rho}^\lambda (z_0)}\inf_{w\in Q_{2\rho}^\lambda (z_0)}a(w)^\theta \left[\frac{\Big|u-u_{Q_{2\rho}^\lambda (z_0)}\Big|^{\theta q}}{(2\rho)^{\theta q}}+|Du|^{\theta q}\right] dz,
    \end{aligned}
    $$
    whenever $Q_{2\kappa \rho}^\lambda (z_0)\subset \Omega_T$ satisfies \eqref{cond : p-phase condition}.
\end{lemma}
\begin{proof}
    Since $a(\cdot) \in C^{\alpha,\frac{\alpha}{2}}(\Omega_T)$, we see that
    \begin{align}
        &\miint{Q_{2\rho}^\lambda (z_0)}\left[\frac{\Big|u-u_{Q_{2\rho}^\lambda (z_0)}\Big|^p}{(2\rho)^p}+a(z)\frac{\Big|u-u_{Q_{2\rho}^\lambda (z_0)}\Big|^q}{(2\rho)^q}\right] dz\nonumber\\
        &\quad \leq \miint{Q_{2\rho}^\lambda (z_0)}\frac{\Big|u-u_{Q_{2\rho}^\lambda (z_0)}\Big|^p}{(2\rho)^p}\, dz+ \miint{Q_{2\rho}^\lambda (z_0)} \inf_{w\in Q_{2\rho}^\lambda (z_0)} a(w)\frac{\Big|u-u_{Q_{2\rho}^\lambda (z_0)}\Big|^q}{(2\rho)^q}\, dz\nonumber\\ \label{eq : energy estimate in p-intrinsic cylinder}
        &\qquad +[a]_\alpha (2\rho)^\alpha \miint{Q_{2\rho}^\lambda (z_0)} \frac{\Big|u-u_{Q_{2\rho}^\lambda (z_0)}\Big|^q}{(2\rho)^q}\, dz.
    \end{align}
    By applying Lemma \ref{lem : Gagliardo-Nirenberg inequality} in the same manner as in \cite[Lemma 4.4]{2023_Gradient_Higher_Integrability_for_Degenerate_Parabolic_Double-Phase_Systems}, we obtain that for any $\theta\in\left(\frac{n}{n+2},1\right)$,
    \begin{align*}
        &\miint{Q_{2\rho}^\lambda (z_0)}\frac{\Big|u-u_{Q_{2\rho}^\lambda (z_0)}\Big|^p}{(2\rho)^p}\, dz+ \miint{Q_{2\rho}^\lambda (z_0)} \inf_{w\in Q_{2\rho}^\lambda (z_0)} a(w)\frac{\Big|u-u_{Q_{2\rho}^\lambda (z_0)}\Big|^q}{(2\rho)^q}\, dz\\
        &\quad \leq c\miint{Q_{2\rho}^\lambda (z_0)}\left[\frac{\Big|u-u_{Q_{2\rho}^\lambda (z_0)}\Big|^{\theta p}}{(2\rho)^{\theta p}}+|Du|^{\theta p}\right] dz\left(S(u,Q_{2\rho}^\lambda(z_0))\right)^{\frac{(1-\theta)p}{2}}\\
        &\qquad + c\miint{Q_{2\rho}^\lambda (z_0)}\left[\inf_{w\in Q_{2\rho}^\lambda (z_0)}a(w)^\theta \frac{\Big|u-u_{Q_{2\rho}^\lambda (z_0)}\Big|^{\theta q}}{(2\rho)^{\theta q}}+\inf_{w\in Q_{2\rho}^\lambda (z_0)}a(w)^\theta |Du|^{\theta q}\right] dz\\
        &\qquad\quad \times \lambda^{(p-q)(1-\theta)}\left(S(u,Q_{2\rho}^\lambda(z_0))\right)^{\frac{(1-\theta)q}{2}}
    \end{align*}
    for some $c=c(\operatorname{data}_s)>1$. It follows from Lemma \ref{lem : estimation of S in p-intrinsic cylinder} that
    \begin{equation*}
        \begin{aligned}
            &\miint{Q_{2\rho}^\lambda (z_0)}\frac{\Big|u-u_{Q_{2\rho}^\lambda (z_0)}\Big|^p}{(2\rho)^p}\, dz+ \miint{Q_{2\rho}^\lambda (z_0)} \inf_{w\in Q_{2\rho}^\lambda (z_0)} a(w)\frac{\Big|u-u_{Q_{2\rho}^\lambda (z_0)}\Big|^q}{(2\rho)^q}\, dz\\
            &\quad \leq c\lambda^{(1-\theta)p}\miint{Q_{2\rho}^\lambda (z_0)}\left[\frac{\Big|u-u_{Q_{2\rho}^\lambda (z_0)}\Big|^{\theta p}}{(2\rho)^{\theta p}}+|Du|^{\theta p}\right] dz\\
            &\qquad + c\lambda^{(1-\theta)p}\miint{Q_{2\rho}^\lambda (z_0)}\inf_{w\in Q_{2\rho}^\lambda (z_0)}a(w)^\theta \left[\frac{\Big|u-u_{Q_{2\rho}^\lambda (z_0)}\Big|^{\theta q}}{(2\rho)^{\theta q}}+|Du|^{\theta q}\right] dz.
        \end{aligned}
    \end{equation*}

    Now, it remains to estimate only the last term in \eqref{eq : energy estimate in p-intrinsic cylinder}. We claim
    \begin{equation}\label{eq : q-energy estimate in p-intrinsic cylinder}
    \begin{aligned}
    &(2\rho)^\alpha\miint{Q^\lambda_{2\rho}(z_0)} \frac{|u-u_{Q^\lambda_{2\rho}(z_0)}|^q}{(2\rho)^q}\, dz\\
    &\qquad \leq c \lambda ^{p(1-\theta)}\miint{Q^\lambda_{2\rho}(z_0)} \left[\frac{\Big|u-u_{Q^\lambda_{2\rho}(z_0)}\Big|^{\theta p}}{(2\rho)^{\theta p}}+|Du|^{\theta p}\right] dz.
    \end{aligned}
    \end{equation} 
    We shall prove the above claim by dividing it into cases $2\leq s \leq 4$ and $4<s<\infty$. Suppose that $s\in [2,4]$. 
    We then note that 
    $$
    \frac{nq}{(n+2)p}\leq \frac{n}{n+2}\left(1+\frac{s}{(n+s)p}\right)=\frac{n}{n+2}\cdot\frac{pn+(p+1)s}{(n+s)p}<1.
    $$
    Since 
    \begin{equation}\label{eq : assumption of Gagliardo Nirenberg Lemma}
        -\frac{n}{q}\leq \frac{\theta p}{q}\left(1-\frac{n}{\theta p}\right)-\left(1-\frac{\theta p}{q}\right)\frac{n}{p_3} \ \, \iff \ \, \frac{nq}{(n+p_3)p}\leq \theta,
    \end{equation}
    by taking $p_1=q$, $p_2=\theta p$, $p_3=2$ and $\vartheta=\frac{\theta p}{q}$ for $\theta\in\left(\frac{nq}{(n+2)p},1\right)$ in Lemma \ref{lem : Gagliardo-Nirenberg inequality}, we find that the assumption of Lemma \ref{lem : Gagliardo-Nirenberg inequality} is satisfied. Hence, we have
    $$
    \begin{aligned}
        &(2\rho)^\alpha\miint{Q^\lambda_{2\rho}(z_0)} \frac{\Big|u-u_{Q^\lambda_{2\rho}(z_0)}\Big|^q}{(2\rho)^q}\, dz\\
        &\qquad \leq c(2\rho)^\alpha\miint{Q_{2\rho}^\lambda(z_0)}\left[\frac{\Big|u-u_{Q^\lambda_{2\rho}(z_0)}\Big|^{\theta p}}{(2\rho)^{\theta p}}+|Du|^{\theta p}\right] dz\\
        &\qquad \qquad \times \sup_{I_{2\rho}^\lambda}\left(\dashint_{B_{2\rho}(x_0)}\frac{\Big|u-u_{Q^\lambda_{2\rho}(z_0)}\Big|^{2}}{(2\rho)^2}\,dx\right)^{\frac{q-\theta p}{2}}\\
        &\qquad \leq c\miint{Q_{2\rho}^\lambda(z_0)}\left[\frac{\Big|u-u_{Q^\lambda_{2\rho}(z_0)}\Big|^{\theta p}}{(2\rho)^{\theta p}}+|Du|^{\theta p}\right] dz\\
        &\qquad \qquad \times (2\rho)^\alpha \sup_{I_{2\rho}^\lambda}\left(\dashint_{B_{2\rho}(x_0)}\frac{\Big|u-u_{Q^\lambda_{2\rho}(z_0)}\Big|^{s}}{(2\rho)^s}\,dx\right)^{\frac{q-p}{s}}\\
        &\qquad \qquad \times \sup_{I_{2\rho}^\lambda}\left(\dashint_{B_{2\rho}(x_0)}\frac{\Big|u-u_{Q^\lambda_{2\rho}(z_0)}\Big|^{2}}{(2\rho)^2}\,dx\right)^{\frac{p(1-\theta)}{2}}\\
        &\qquad \leq c\lambda^{(1-\theta)p}\miint{Q_{2\rho}^\lambda (z_0)}\left[\frac{\Big|u-u_{Q_{2\rho}^\lambda (z_0)}\Big|^{\theta p}}{(2\rho)^{\theta p}}+|Du|^{\theta p}\right] dz
    \end{aligned}
    $$
    for some $c=c(\operatorname{data}_s)>1$. Finally, we assume that $4<s<\infty$. Consider $$\theta \in \left(\frac{ps(s-3)-2(q-p)}{ps(s-3)},1\right)\quad \text{and} \quad \tilde{p}=\frac{2s(q-p\theta)}{ps(1-\theta)+2(q-p)}.$$
    Since $q-p< 1$ and $s>4$, $2(q-p)<2<ps(s-3)$ and $\tilde{p}<s$. Moreover, through a simple calculation, we easily obtain that $s-1<\tilde{p}$ whenever $\frac{ps(s-3)-2(q-p)}{ps(s-3)}<\theta$. Hence, if
    $$\theta \in \left(\frac{ps(s-3)-2(q-p)}{ps(s-3)},1\right)\quad \text{and} \quad \tilde{p}=\frac{2s(q-p\theta)}{ps(1-\theta)+2(q-p)},$$
    then $s-1<\tilde{p}<s$. Since $2<s-1<\tilde{p}$ and $2\leq p$, we have 
    $$
    \begin{aligned}
        \frac{nq}{(n+\tilde{p})p}<\frac{nq}{(n+s-1)p}&<\frac{n}{n+s-1}\left(1+\frac{s}{(n+s)p}\right)\\
        &\leq \frac{n}{n+s-1}\left(\frac{2n+3s}{2n+2s}\right)<1.
    \end{aligned}
    $$
    Hence, by letting $p_1=q$, $p_2=\theta p$, $p_3=\tilde{p}$ and $\vartheta=\frac{\theta p}{q}$, we observe from \eqref{eq : assumption of Gagliardo Nirenberg Lemma} that the assumption in Lemma \ref{lem : Gagliardo-Nirenberg inequality} is satisfied for any $\theta \in \left(\frac{ps(s-3)-2(q-p)}{ps(s-3)},1\right)$. Thus, we have
    $$
    \begin{aligned}
        &(2\rho)^\alpha\miint{Q^\lambda_{2\rho}(z_0)} \frac{\Big|u-u_{Q^\lambda_{2\rho}(z_0)}\Big|^q}{(2\rho)^q}\, dz\\
        &\qquad\leq c\miint{Q^\lambda_{2\rho}(z_0)} \left[\frac{\Big|u-u_{Q^\lambda_{2\rho}(z_0)}\Big|^{\theta p}}{(2\rho)^{\theta p}}+|Du|^{\theta p}\right] dz\\
        &\qquad\qquad\times(2\rho)^\alpha \left(\sup_{I_{2\rho}^\lambda (t_0)}\dashint_{B_{2\rho}(x_0)}\frac{\Big|u-u_{Q^\lambda_{2\rho}(z_0)}\Big|^{\tilde{p}}}{(2\rho)^{\tilde{p}}}\,dx\right)^{\frac{q-p\theta}{\tilde{p}}}.
    \end{aligned} 
    $$
    By the interpolation inequality for $L^p$-norms, we get
    $$
    \begin{aligned}
    &(2\rho)^\alpha \sup_{I_{2\rho}^\lambda (t_0)}\left(\dashint_{B_{2\rho}(x_0)}\frac{\Big|u-u_{Q^\lambda_{2\rho}(z_0)}\Big|^{\tilde{p}}}{(2\rho)^{\tilde{p}}}\,dx\right)^{\frac{q-p\theta}{\tilde{p}}}\\
    &\qquad\leq \sup_{I_{2\rho}^\lambda (t_0)}\left(\dashint_{B_{2\rho}(x_0)}\frac{\Big|u-u_{Q^\lambda_{2\rho}(z_0)}\Big|^{2}}{(2\rho)^{2}}\,dx\right)^{\frac{q-p\theta}{2}\tilde{\theta}}\\
    &\qquad \qquad \times (2\rho)^\alpha \sup_{I_{2\rho}^\lambda (t_0)}\left(\dashint_{B_{2\rho}(x_0)}\frac{\Big|u-u_{Q^\lambda_{2\rho}(z_0)}\Big|^{s}}{(2\rho)^{s}}\,dx\right)^{\frac{q-p\theta}{s}(1-\tilde{\theta})},
    \end{aligned}
    $$
    where $\tilde{\theta}=\frac{2(s-\tilde{p})}{\tilde{p}(s-2)}$ and $1-\tilde{\theta}=\frac{s(\tilde{p}-2)}{\tilde{p}(s-2)}$. We obtain from Lemma \ref{lem : estimation of S in p-intrinsic cylinder}, \eqref{cond : main assumption with s} and definition of $\|u\|_{C(0,T;L^s(\Omega))}$ that
    $$
    (2\rho)^\alpha \sup_{I_{2\rho}^\lambda (t_0)}\left(\dashint_{B_{2\rho}(x_0)}\frac{\Big|u-u_{Q^\lambda_{2\rho}(z_0)}\Big|^{\tilde{p}}}{(2\rho)^{\tilde{p}}}\,dx\right)^{\frac{q-p\theta}{\tilde{p}}}\leq c\lambda^{\frac{2(q-p\theta)(s-\tilde{p})}{\tilde{p}(s-2)}}\rho^{\alpha-\frac{s(n+s)(q-p\theta)(\tilde{p}-2)}{\tilde{p}s(s-2)}}.
    $$
    Since $\frac{2(q-p\theta)(s-\tilde{p})}{\tilde{p}(s-2)}=p(1-\theta)$ and $\alpha-\frac{s(n+s)(q-p\theta)(\tilde{p}-2)}{\tilde{p}s(s-2)}=\alpha-\frac{(n+s)(q-p)}{s}\geq 0$, we have
    $$
    (2\rho)^\alpha \sup_{I_{2\rho}^\lambda (t_0)}\left(\dashint_{B_{2\rho}(x_0)}\frac{\Big|u-u_{Q^\lambda_{2\rho}(z_0)}\Big|^{\tilde{p}}}{(2\rho)^{\tilde{p}}}\,dx\right)^{\frac{q-p\theta}{\tilde{p}}}\leq c\lambda^{p(1-\theta)}.
    $$
    Therefore, the claim holds in both cases and hence we have the conclusion for 
    $$
    \theta_1=\begin{cases}
        \frac{nq}{p(n+2)}\quad&\text{if }2\leq s\leq 4,\\
        \\
        \max\left\{\frac{n}{n+2},\frac{ps(s-3)-2(q-p)}{ps(s-3)}\right\}\quad &\text{if }4<s<\infty.
    \end{cases}
    $$
\end{proof}
\begin{lemma}\label{lem : estimation of difference of u and mean u in p intrinsic cylinder with s=infty}
    Let $u$ be a weak solution to \eqref{eq : main equation} with \eqref{cond : main assumption with infty}. Then there exist constants $c=c(\operatorname{data}_b)>1$ and $\theta_1=\theta_1(n)\in (0,1)$ such that for any $\theta\in(\theta_1,1)$
    $$
    \begin{aligned}
        &\miint{Q_{2\rho}^\lambda (z_0)}\left[\frac{\Big|u-u_{Q_{2\rho}^\lambda (z_0)}\Big|^p}{(2\rho)^p}+a(z)\frac{\Big|u-u_{Q_{2\rho}^\lambda (z_0)}\Big|^q}{(2\rho)^q}\right] dz\\
        &\quad \leq c\lambda^{(1-\theta)p}\miint{Q_{2\rho}^\lambda (z_0)} \left[\frac{\Big|u-u_{Q_{2\rho}^\lambda (z_0)}\Big|^{\theta p}}{(2\rho)^{\theta p}}+|Du|^{\theta p}\right] dz\\
        &\qquad + c\lambda^{(1-\theta)p}\miint{Q_{2\rho}^\lambda (z_0)}\inf_{w\in Q_{2\rho}^\lambda (z_0)}a(w)^\theta \left[\frac{\Big|u-u_{Q_{2\rho}^\lambda (z_0)}\Big|^{\theta q}}{(2\rho)^{\theta q}}+|Du|^{\theta q}\right] dz,
    \end{aligned}
    $$
    whenever $Q_{2\kappa \rho}^\lambda (z_0)\subset \Omega_T$ satisfies \eqref{cond : p-phase condition}.
\end{lemma}
\begin{proof}
    As in the above proof, we have 
    \begin{align*}
        &\miint{Q_{2\rho}^\lambda (z_0)}\left[\frac{\Big|u-u_{Q_{2\rho}^\lambda (z_0)}\Big|^p}{(2\rho)^p}+a(z)\frac{\Big|u-u_{Q_{2\rho}^\lambda (z_0)}\Big|^q}{(2\rho)^q}\right] dz\\
        &\quad \leq \miint{Q_{2\rho}^\lambda (z_0)}\frac{\Big|u-u_{Q_{2\rho}^\lambda (z_0)}\Big|^p}{(2\rho)^p}\, dz+ \miint{Q_{2\rho}^\lambda (z_0)} \inf_{w\in Q_{2\rho}^\lambda (z_0)} a(w)\frac{\Big|u-u_{Q_{2\rho}^\lambda (z_0)}\Big|^q}{(2\rho)^q}\, dz\\ 
        &\qquad +[a]_\alpha (2\rho)^\alpha \miint{Q_{2\rho}^\lambda (z_0)} \frac{\Big|u-u_{Q_{2\rho}^\lambda (z_0)}\Big|^q}{(2\rho)^q}\, dz.
    \end{align*}
and, for any $\theta\in\left(\frac{n}{n+2},1\right)$,
\begin{equation}\label{eq : p,q-energy estimate in p-intrinsic cylinder}
        \begin{aligned}
            &\miint{Q_{2\rho}^\lambda (z_0)}\frac{\Big|u-u_{Q_{2\rho}^\lambda (z_0)}\Big|^p}{(2\rho)^p}\, dz+ \miint{Q_{2\rho}^\lambda (z_0)} \inf_{w\in Q_{2\rho}^\lambda (z_0)} a(w)\frac{\Big|u-u_{Q_{2\rho}^\lambda (z_0)}\Big|^q}{(2\rho)^q}\, dz\\
            &\quad \leq c\lambda^{(1-\theta)p}\miint{Q_{2\rho}^\lambda (z_0)}\left[\frac{\Big|u-u_{Q_{2\rho}^\lambda (z_0)}\Big|^{\theta p}}{(2\rho)^{\theta p}}+|Du|^{\theta p}\right] dz\\
            &\qquad + c\lambda^{(1-\theta)p}\miint{Q_{2\rho}^\lambda (z_0)}\inf_{w\in Q_{2\rho}^\lambda (z_0)}a(w)^\theta \left[\frac{\Big|u-u_{Q_{2\rho}^\lambda (z_0)}\Big|^{\theta q}}{(2\rho)^{\theta q}}+|Du|^{\theta q}\right] dz.
        \end{aligned}
\end{equation}
Moreover, we obtain from $q\leq p + \alpha$ and \eqref{eq : p,q-energy estimate in p-intrinsic cylinder} that
    $$
    \begin{aligned}
    &(2\rho)^\alpha\miint{Q^\lambda_{2\rho}(z_0)} \frac{\Big|u-u_{Q^\lambda_{2\rho}(z_0)}\Big|^q}{(2\rho)^q}\, dz\\
    &\qquad =(2\rho)^\alpha\miint{Q^\lambda_{2\rho}(z_0)} \frac{\Big|u-u_{Q^\lambda_{2\rho}(z_0)}\Big|^{q-p}}{(2\rho)^{q-p}}\cdot\frac{\Big|u-u_{Q^\lambda_{2\rho}(z_0)}\Big|^p}{(2\rho)^p}\, dz\\
    &\qquad \leq c\rho^{\alpha+p-q} \miint{Q^\lambda_{2\rho}(z_0)}\frac{\Big|u-u_{Q^\lambda_{2\rho}(z_0)}\Big|^p}{(2\rho)^p}\, dz\\
    &\qquad\leq c\lambda^{(1-\theta)p}\miint{Q_{2\rho}^\lambda (z_0)}\left[\frac{\Big|u-u_{Q_{2\rho}^\lambda (z_0)}\Big|^{\theta p}}{(2\rho)^{\theta p}}+|Du|^{\theta p}\right] dz\\
    &\qquad\quad + c\lambda^{(1-\theta)p}\miint{Q_{2\rho}^\lambda (z_0)}\inf_{w\in Q_{2\rho}^\lambda (z_0)}a(w)^\theta \left[\frac{\Big|u-u_{Q_{2\rho}^\lambda (z_0)}\Big|^{\theta q}}{(2\rho)^{\theta q}}+|Du|^{\theta q}\right] dz.
    \end{aligned}
    $$
    for some $c=c(p,q,\alpha,\operatorname{diam}(\Omega),\|u\|_{L^\infty(\Omega_T)})>1$. Hence, combining the above inequalities, we have, for any $\theta\in\left(\theta_1,1\right)$,
    \begin{align*}
        &\miint{Q_{2\rho}^\lambda (z_0)}\left[\frac{\Big|u-u_{Q_{2\rho}^\lambda (z_0)}\Big|^p}{(2\rho)^p}+a(z)\frac{\Big|u-u_{Q_{2\rho}^\lambda (z_0)}\Big|^q}{(2\rho)^q}\right] dz\\
        &\qquad\leq c\lambda^{(1-\theta)p}\miint{Q_{2\rho}^\lambda (z_0)}\left[\frac{\Big|u-u_{Q_{2\rho}^\lambda (z_0)}\Big|^{\theta p}}{(2\rho)^{\theta p}}+|Du|^{\theta p}\right] dz\\
        &\qquad\quad + c\lambda^{(1-\theta)p}\miint{Q_{2\rho}^\lambda (z_0)}\inf_{w\in Q_{2\rho}^\lambda (z_0)}a(w)^\theta \left[\frac{\Big|u-u_{Q_{2\rho}^\lambda (z_0)}\Big|^{\theta q}}{(2\rho)^{\theta q}}+|Du|^{\theta q}\right] dz,
    \end{align*}
    where $\theta_1 := \frac{n}{n+2}\in(0,1)$ and $c=c(\operatorname{data}_b)>1$.
\end{proof}
\begin{lemma}
    Let $u$ be a weak solution to \eqref{eq : main equation} with \eqref{cond : main assumption with infty} or \eqref{cond : main assumption with s}. Then there exist constants $c=c(\operatorname{data})>1$ and $\theta_0\in (0,1)$ such that, for any $\theta\in(\theta_0,1)$,
    $$
    \begin{aligned}
        &\miint{Q_\rho^\lambda(z_0)} H(z,|Du|)\, dz\leq c\left(\miint{Q_{2\rho}^\lambda(z_0)} H(z,|Du|)^\theta\,dz\right)^\frac{1}{\theta},
    \end{aligned}
    $$
    whenever $Q_{2\kappa \rho}^\lambda (z_0)\subset \Omega_T$ satisfies \eqref{cond : p-phase condition}. Here, $$
    \theta_0=\begin{cases}
        \theta_0(n,p,q) \quad&\text{if \eqref{cond : main assumption with infty} holds},\\
        \theta_0(n,p,q,s) &\text{if \eqref{cond : main assumption with s} holds}.
    \end{cases}
    $$
\end{lemma}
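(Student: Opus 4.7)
The plan is to combine Lemma~\ref{lem : Caccioppoli inequality} with the $p$-intrinsic energy estimates of Lemmas~\ref{lem : p-intrinsic parabolic Poincare inequality of p-term in p-intrinsic cylinder}--\ref{lem : p-intrinsic parabolic Poincare inequality of q-term in p-intrinsic cylinder} and Lemmas~\ref{lem : estimation of S in p-intrinsic cylinder}--\ref{lem : estimation of difference of u and mean u in p intrinsic cylinder with s=infty}, and then to close the loop through the stopping-time identity $\miint{Q_\rho^\lambda(z_0)} H(z,|Du|)\,dz = \lambda^p$ from \eqref{cond : p-phase condition}$_3$.

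First I would apply the Caccioppoli inequality to the intrinsic pair $Q_\rho^\lambda(z_0)\subset Q_{2\rho}^\lambda(z_0)$ with $\tau = \lambda^{2-p}\rho^2$ and $\ell = 4\lambda^{2-p}\rho^2$, giving
\begin{align*}
\miint{Q_\rho^\lambda(z_0)} H(z,|Du|)\,dz &\leq c\miint{Q_{2\rho}^\lambda(z_0)} \biggl[\frac{|u-u_{Q_{2\rho}^\lambda(z_0)}|^p}{\rho^p}+a(z)\frac{|u-u_{Q_{2\rho}^\lambda(z_0)}|^q}{\rho^q}\biggr] dz\\
&\quad + c\lambda^{p-2}\miint{Q_{2\rho}^\lambda(z_0)}\frac{|u-u_{Q_{2\rho}^\lambda(z_0)}|^2}{\rho^2}\,dz.
\end{align*}
The $(p,q)$-term is then dominated, by Lemma~\ref{lem : estimation of difference of u and mean u in p intrinsic cylinder with s=infty} under \eqref{cond : main assumption with infty} and by Lemma~\ref{lem : estimation of difference of u and mean u in p intrinsic cylinder with s<infty} under \eqref{cond : main assumption with s}, by $c\lambda^{(1-\theta)p}$ times an integral over $Q_{2\rho}^\lambda(z_0)$ of the $\theta p$- and $\theta q$-difference quotients together with $|Du|^{\theta p}$ and $\inf a^\theta|Du|^{\theta q}$. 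Applying Lemmas~\ref{lem : p-intrinsic parabolic Poincare inequality of p-term in p-intrinsic cylinder}--\ref{lem : p-intrinsic parabolic Poincare inequality of q-term in p-intrinsic cylinder} at exponent $\theta$ absorbs the difference quotients into $\miint H(z,|Du|)^\theta dz$, and the pointwise bound $|Du|^{\theta p}+\inf a^\theta|Du|^{\theta q}\leq c\,H(z,|Du|)^\theta$ collapses the rest, producing an overall $(p,q)$-contribution of at most $c\lambda^{(1-\theta)p}\miint{Q_{2\rho}^\lambda(z_0)} H(z,|Du|)^\theta dz$.

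Second, for the $L^2$ term I would interpolate via Lemma~\ref{lem : Gagliardo-Nirenberg inequality} between $L^{\theta p}$ and $L^2$ taking $p_1=2$, $p_2=\theta p$ and $\vartheta=1$ (admissible once $\theta p\geq 2n/(n+2)$) and then invoke Lemma~\ref{lem : p-intrinsic parabolic Poincare inequality of p-term in p-intrinsic cylinder} to control the resulting $\theta p$-energy by $\miint H^\theta dz$, yielding
$$
\miint{Q_{2\rho}^\lambda(z_0)}\frac{|u-u_{Q_{2\rho}^\lambda(z_0)}|^2}{\rho^2}\,dz \leq c\Big(\miint{Q_{2\rho}^\lambda(z_0)} H(z,|Du|)^\theta dz\Big)^{2/(\theta p)}.
$$
Young's inequality with conjugate exponents $p/(p-2)$ and $p/2$ then produces, for any $\varepsilon>0$,
$$
\lambda^{p-2}\miint{Q_{2\rho}^\lambda(z_0)}\frac{|u-u_{Q_{2\rho}^\lambda(z_0)}|^2}{\rho^2}\,dz \leq \varepsilon\lambda^p + c_\varepsilon\Big(\miint{Q_{2\rho}^\lambda(z_0)} H(z,|Du|)^\theta dz\Big)^{1/\theta}.
$$

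Assembling the three estimates and using \eqref{cond : p-phase condition}$_3$ on the left-hand side gives
$$
\lambda^p \leq c\lambda^{(1-\theta)p}\miint{Q_{2\rho}^\lambda(z_0)} H^\theta dz + c\varepsilon\lambda^p + c\,c_\varepsilon\Big(\miint{Q_{2\rho}^\lambda(z_0)} H^\theta dz\Big)^{1/\theta}.
$$
A further Young with exponents $1/(1-\theta)$ and $1/\theta$ turns the first right-hand term into another small multiple of $\lambda^p$ plus a constant multiple of $(\miint H^\theta)^{1/\theta}$; choosing $\varepsilon$ small enough (depending on the absolute constants) absorbs both $\lambda^p$-contributions on the left and gives the desired reverse H\"{o}lder bound $\lambda^p \leq c\,(\miint H^\theta dz)^{1/\theta}$, hence the claim. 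The main obstacle is precisely the $\lambda^{p-2}$-weighted $L^2$ term: the naive bound $S(u,Q_{2\rho}^\lambda)\leq c\lambda^2$ from Lemma~\ref{lem : estimation of S in p-intrinsic cylinder} only returns $c\lambda^p$, which after substitution of \eqref{cond : p-phase condition}$_3$ sits at the same critical scale as the left-hand side and cannot be absorbed with an uncontrolled constant; the Gagliardo--Nirenberg plus $\varepsilon$-Young splitting is the essential device that enables this absorption to go through.
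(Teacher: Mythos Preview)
Your overall scheme---Caccioppoli, then Lemmas~\ref{lem : estimation of difference of u and mean u in p intrinsic cylinder with s<infty}/\ref{lem : estimation of difference of u and mean u in p intrinsic cylinder with s=infty} combined with Lemmas~\ref{lem : p-intrinsic parabolic Poincare inequality of p-term in p-intrinsic cylinder}--\ref{lem : p-intrinsic parabolic Poincare inequality of q-term in p-intrinsic cylinder} for the double-phase term, then Young to absorb $\lambda^p$ via \eqref{cond : p-phase condition}$_3$---matches the paper and is correct for the $(p,q)$-contribution. The gap lies in your treatment of the quadratic term. First, Lemma~\ref{lem : Gagliardo-Nirenberg inequality} is stated only for $\vartheta\in(0,1)$, so $\vartheta=1$ is formally outside its scope. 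More seriously, the endpoint $\vartheta=1$ yields only the \emph{slice-wise} Sobolev bound
\[
\dashint_{B_{2\rho}(x_0)}\frac{|u-u_{Q_{2\rho}^\lambda(z_0)}|^2}{\rho^2}\,dx
\le c\Big(\dashint_{B_{2\rho}(x_0)}\Big[\frac{|u-u_{Q_{2\rho}^\lambda(z_0)}|^{\theta p}}{\rho^{\theta p}}+|Du|^{\theta p}\Big]dx\Big)^{2/(\theta p)}.
\]
To reach your displayed space--time estimate you must average in $t$ and pull the power $2/(\theta p)$ through the time integral; Jensen does this only when $2/(\theta p)\le 1$, i.e.\ $\theta p\ge 2$. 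Since the lemma must cover $p=2$ (where $\theta p<2$ for every $\theta<1$), that step fails, and the bound $\miint{Q_{2\rho}^\lambda(z_0)}|u-u_{Q_{2\rho}^\lambda(z_0)}|^2/\rho^2\,dz\le c\big(\miint{Q_{2\rho}^\lambda(z_0)} H^\theta\big)^{2/(\theta p)}$ is not established by your argument.

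The paper fixes precisely this point by \emph{not} taking $\vartheta=1$: it interpolates with $p_3=2$ and a genuine $\vartheta\in(0,1)$, so that the remaining slice-wise $L^2$ factor is controlled by $S(u,Q_{2\rho}^\lambda(z_0))^{1-\vartheta}\le c\lambda^{2(1-\vartheta)}$ via Lemma~\ref{lem : estimation of S in p-intrinsic cylinder}. With $\vartheta=\tfrac12$ the surviving exponent is $1/(\theta p)\le 1$, so the time average does commute, and one obtains (this is \cite[Lemma~4.5]{2023_Gradient_Higher_Integrability_for_Degenerate_Parabolic_Double-Phase_Systems}, quoted in the paper)
\[
\miint{Q_{2\rho}^\lambda(z_0)}\frac{|u-u_{Q_{2\rho}^\lambda(z_0)}|^2}{\rho^2}\,dz
\le c\,\lambda\Big(\miint{Q_{2\rho}^\lambda(z_0)}\Big[\frac{|u-u_{Q_{2\rho}^\lambda(z_0)}|^{\theta p}}{\rho^{\theta p}}+|Du|^{\theta p}\Big]dz\Big)^{1/(\theta p)}.
\]
After multiplying by $\lambda^{p-2}$ one applies Young with conjugates $p/(p-1)$ and $p$ (not $p/(p-2)$ and $p/2$) to produce $\tfrac14\lambda^p+c\big(\miint{Q_{2\rho}^\lambda(z_0)} H^\theta\big)^{1/\theta}$, and the proof closes exactly as you describe. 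Your diagnosis of the obstacle is right, but the absorption mechanism needs the sup-in-time energy from Lemma~\ref{lem : estimation of S in p-intrinsic cylinder}, not the pure spatial Sobolev embedding.
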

\begin{proof}
    By Lemma \ref{lem : Caccioppoli inequality}, we get
    $$
    \begin{aligned}
    \miint{Q_\rho^\lambda(z_0)} H(z,|Du|)\, dz &\leq c \miint{Q_{2\rho}^\lambda(z_0)} \left[\frac{\Big|u-u_{Q_{2\rho}^\lambda(z_0)}\Big|^p}{(2\rho)^p}+a(z)\frac{\Big|u-u_{Q_{2\rho}^\lambda(z_0)}\Big|^q}{(2\rho)^q}\right] dz\\
    &\quad +c \lambda^{p-2} \miint{Q_{2\rho}^\lambda (z_0)} \frac{\Big|u-u_{Q_{2\rho}^\lambda(z_0)}\Big|^2}{(2\rho)^2}\, dz
    \end{aligned}
    $$
    for some $c=c(n,p,q,\nu,L)>1$. Using Lemmas \ref{lem : p-intrinsic parabolic Poincare inequality of p-term in p-intrinsic cylinder}, \ref{lem : p-intrinsic parabolic Poincare inequality of q-term in p-intrinsic cylinder}, \ref{lem : estimation of difference of u and mean u in p intrinsic cylinder with s<infty}, \ref{lem : estimation of difference of u and mean u in p intrinsic cylinder with s=infty} and Young's inequality yields
    $$
    \begin{aligned}
        &c\miint{Q_{2\rho}^\lambda(z_0)} \left[\frac{\Big|u-u_{Q_{2\rho}^\lambda(z_0)}\Big|^p}{(2\rho)^p}+a(z)\frac{\Big|u-u_{Q_{2\rho}^\lambda(z_0)}\Big|^q}{(2\rho)^q}\right] dz\\
        &\qquad \leq c_0\lambda^{p(1-\theta)}\miint{Q_{2\rho}^\lambda (z_0)} H(z,|Du|)^\theta \,dz\\
        &\qquad \leq \frac{1}{4}\lambda^p +c_0^{\frac{1}{\theta}}\theta[4(1-\theta)]^{\frac{1-\theta}{\theta}} \left(\miint{Q_{2\rho}^\lambda (z_0)} H(z,|Du|)^\theta \,dz\right)^\frac{1}{\theta}\\
        &\qquad \leq \frac{1}{4}\lambda^p +c_0^{\frac{1}{\theta_0}}4^{\frac{1-\theta_0}{\theta_0}}\left(\miint{Q_{2\rho}^\lambda (z_0)} H(z,|Du|)^\theta \,dz\right)^\frac{1}{\theta}
    \end{aligned}
    $$
    for any $\theta\in (\theta_2,1)$ and for some $c_0=c_0(\operatorname{data})>1$, where $\theta_2=\max\{\theta_1,\frac{q-1}{p}\}$ and $$\theta_1 =\begin{cases}
        \theta_1(n) \text{, which is defined in Lemma \ref{lem : estimation of difference of u and mean u in p intrinsic cylinder with s=infty}},\\
        \theta_1(n,p,q,s)\text{, which is defined in Lemma \ref{lem : estimation of difference of u and mean u in p intrinsic cylinder with s<infty}}.
    \end{cases}$$
    Write $c=c_0^{\frac{1}{\theta_0}}4^{\frac{1-\theta_0}{\theta_0}}$. Next, by \cite[Lemma 4.5]{2023_Gradient_Higher_Integrability_for_Degenerate_Parabolic_Double-Phase_Systems}, we obtain
    $$
    \miint{Q_{2\rho}^\lambda (z_0)} \frac{\Big|u-u_{Q_{2\rho}^\lambda(z_0)}\Big|^2}{(2\rho)^2}\, dz\leq c \lambda\left(\miint{Q_{2\rho}^\lambda(z_0)}\left[\frac{\Big|u-u_{Q_{2\rho}^\lambda(z_0)}\Big|^{\theta p}}{(2\rho)^{\theta p}}+|Du|^{\theta p}\right] dz\right)^\frac{1}{\theta p}
    $$
    for some $c=c(\operatorname{data})>1$ and for any $\theta\in (\frac{2n}{(n+2)p},1)$. It follows from Lemma \ref{lem : p-intrinsic parabolic Poincare inequality of p-term in p-intrinsic cylinder} and Young's inequality that
    $$
    \begin{aligned}
        &c\lambda^{p-2}\miint{Q_{2\rho}^\lambda (z_0)} \frac{\Big|u-u_{Q_{2\rho}^\lambda(z_0)}\Big|^2}{(2\rho)^2}\, dz\\
        &\quad \leq c \lambda^{p-1}\left(\miint{Q_{2\rho}^\lambda(z_0)}\left[\frac{\Big|u-u_{Q_{2\rho}^\lambda(z_0)}\Big|^{\theta p}}{(2\rho)^{\theta p}}+|Du|^{\theta p}\right] dz\right)^\frac{1}{\theta p}\\
        &\quad \leq \frac{1}{4}\lambda^p + c\left(\miint{Q_{2\rho}^\lambda (z_0)} H(z,|Du|)^\theta \,dz\right)^\frac{1}{\theta}.
    \end{aligned}
    $$
    By the third condition in \eqref{cond : p-phase condition}, we conclude that
    $$
    \miint{Q_\rho^\lambda(z_0)} H(z,|Du|)\, dz\leq c\left(\miint{Q_{2\rho}^\lambda (z_0)} H(z,|Du|)^\theta \,dz\right)^\frac{1}{\theta}
    $$
    for any $\theta \in (\theta_0,1)$, where $\theta_0=\max\{\theta_2,\frac{2n}{(n+2)p}\}$.
\end{proof}
Next, we obtain the following lemma in a similar way to \cite[Lemma 4.6]{2023_Gradient_Higher_Integrability_for_Degenerate_Parabolic_Double-Phase_Systems}.
\begin{lemma}\label{lem : reverse Holder inequality for p phase}
    Let $u$ be a weak solution to \eqref{eq : main equation} with \eqref{cond : main assumption with infty} or \eqref{cond : main assumption with s}. Then there exist constants $c=c(\operatorname{data})>1$ and $\theta_0\in(0,1)$ such that, for any $\theta\in(\theta_0,1)$,
    $$
    \iints{Q_{2\kappa\rho}^\lambda (z_0)} H(z,|Du|)\, dz\leq c\Lambda^{1-\theta}\iints{Q_{2\rho}^\lambda (z_0)\cap \Psi(c^{-1}\Lambda)} H(z,|Du|)^\theta \, dz.
    $$
    Here, $$
    \theta_0=\begin{cases}
        \theta_0(n,p,q) \quad&\text{if \eqref{cond : main assumption with infty} holds},\\
        \theta_0(n,p,q,s) &\text{if \eqref{cond : main assumption with s} holds}.
    \end{cases}
    $$
\end{lemma}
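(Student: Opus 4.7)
The plan is to combine the reverse H\"{o}lder inequality on averages just established (the preceding lemma) with a standard super-level set decomposition, exploiting the $p$-phase stopping-time identity to replace the average of $H(z,|Du|)$ by $\lambda^p$.

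First I would note that \eqref{cond : p-phase condition}$_1$ gives $a(z_0)\lambda^q\leq K\lambda^p$, so that $\lambda^p\leq \Lambda=\lambda^p+a(z_0)\lambda^q\leq (1+K)\lambda^p$; in particular $\Lambda\sim \lambda^p$ with constants depending on $\operatorname{data}$. Together with \eqref{cond : p-phase condition}$_3$ and the previous lemma, there is $c_1=c_1(\operatorname{data})>1$ for which
$$
\lambda^p=\miint{Q_\rho^\lambda(z_0)} H(z,|Du|)\,dz\leq c_1\left(\miint{Q_{2\rho}^\lambda(z_0)} H(z,|Du|)^\theta\,dz\right)^{1/\theta}
$$
for any $\theta\in(\theta_0,1)$, with the same $\theta_0$ as in the previous lemma.

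Next, I would split the right-hand side on the super-level set $\Psi(c^{-1}\Lambda)$ from \eqref{def : definition of Psi}. Since $H(z,|Du|)\leq c^{-1}\Lambda$ on $Q_{2\rho}^\lambda(z_0)\setminus \Psi(c^{-1}\Lambda)$,
$$
\miint{Q_{2\rho}^\lambda(z_0)} H(z,|Du|)^\theta\,dz \leq \frac{1}{|Q_{2\rho}^\lambda(z_0)|}\iints{Q_{2\rho}^\lambda(z_0)\cap \Psi(c^{-1}\Lambda)} H(z,|Du|)^\theta\,dz + (c^{-1}\Lambda)^\theta.
$$
Raising to the power $1/\theta$ via $(a+b)^{1/\theta}\leq 2^{(1-\theta)/\theta}(a^{1/\theta}+b^{1/\theta})$ and inserting into the display above produces $\lambda^p$ on the left and a term of size $c_2c^{-1}\Lambda$ on the right. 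Using $\Lambda\leq (1+K)\lambda^p$ and taking $c=c(\operatorname{data})$ large enough that $c_2(1+K)/c\leq 1/2$, I absorb that contribution into the left-hand side to obtain
$$
\lambda^p\leq c_3\left(\frac{1}{|Q_{2\rho}^\lambda(z_0)|}\iints{Q_{2\rho}^\lambda(z_0)\cap \Psi(c^{-1}\Lambda)} H(z,|Du|)^\theta\,dz\right)^{1/\theta}.
$$

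Finally, I raise this to the power $\theta$, multiply by $\lambda^{p(1-\theta)}|Q_{2\kappa\rho}^\lambda(z_0)|$, and use the volume identity $|Q_{2\kappa\rho}^\lambda(z_0)|=\kappa^{n+2}|Q_{2\rho}^\lambda(z_0)|$ together with $\lambda^{p(1-\theta)}\leq \Lambda^{1-\theta}$. The left-hand side becomes $\lambda^p|Q_{2\kappa\rho}^\lambda(z_0)|$, which dominates $\iints{Q_{2\kappa\rho}^\lambda(z_0)} H(z,|Du|)\,dz$ by \eqref{cond : p-phase condition}$_2$ at $\sigma=2\kappa\rho$, and the claimed inequality follows. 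I do not foresee a serious obstacle here, since the nontrivial analytic content has already been carried out in the previous lemma; the only care required is in matching the constant $c$ appearing inside $\Psi(c^{-1}\Lambda)$ with the one produced by the absorption step, which is a standard bookkeeping issue.
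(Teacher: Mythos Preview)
Your argument is correct and is precisely the standard route the paper has in mind when it writes ``in a similar way to \cite[Lemma 4.6]{2023_Gradient_Higher_Integrability_for_Degenerate_Parabolic_Double-Phase_Systems}'': use \eqref{cond : p-phase condition}$_3$ together with the preceding reverse H\"older inequality on averages, split on the super-level set, absorb the small contribution via $\Lambda\leq (1+K)\lambda^p$, and then convert to an integral over $Q_{2\kappa\rho}^\lambda(z_0)$ using \eqref{cond : p-phase condition}$_2$ at $\sigma=2\kappa\rho$. The bookkeeping you flag (matching the two occurrences of $c$) is handled exactly as you suggest, by enlarging $c$ to the maximum of the absorption constant and $c_3\kappa^{n+2}$.
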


\subsection{The $(p,q)$-phase case.}
In this case, we consider the $(p,q)$-intrinsic cylinder denoted in \eqref{eq : definition of p,q-intrinsic cylinder}. We write
$$
S(u,G_\rho^\lambda(z_0))=\sup_{J_\rho^\lambda (t_0)}\dashint_{B_\rho (x_0)}\frac{\Big|u-u_{G_{\rho}^\lambda(z_0)}\Big|^2}{\rho^2}\, dx.
$$ 
\begin{lemma}
    Let $u$ be a weak solution to \eqref{eq : main equation} with \eqref{cond : main assumption with infty} or \eqref{cond : main assumption with s}. Then there exists a constant $c=c(n,p,q,\nu,L)>1$ such that
    $$
    S(u,G_{2\rho}^\lambda(z_0))\leq c\lambda^2,
    $$
    whenever $G_{2\kappa \rho}^\lambda (z_0)\subset \Omega_T$ satisfies \eqref{cond : p,q-phase condition}.
\end{lemma}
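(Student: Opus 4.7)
The plan is to follow the scheme of Lemma~\ref{lem : estimation of S in p-intrinsic cylinder}, with the $p$-intrinsic time scale $\lambda^{2-p}$ replaced by the $(p,q)$-intrinsic time scale $\lambda^2/H_{z_0}(\lambda)$. Fix $2\rho\leq\rho_1<\rho_2\leq 4\rho$ and apply the Caccioppoli inequality (Lemma~\ref{lem : Caccioppoli inequality}) on the concentric cylinders $G_{\rho_1}^\lambda(z_0)\subset G_{\rho_2}^\lambda(z_0)$ with $\ell=\lambda^2\rho_2^2/H_{z_0}(\lambda)$ and $\tau=\lambda^2\rho_1^2/H_{z_0}(\lambda)$; the admissibility condition $\tau\in[\ell/4,\ell)$ reduces to $\rho_1\in[\rho_2/2,\rho_2)$, which holds since $2\rho\leq\rho_1$ and $\rho_2\leq 4\rho$. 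Multiplying through by $\lambda^2/H_{z_0}(\lambda)$, the supremum on the left produces (up to a universal factor and a standard triangle-inequality adjustment) the quantity $S(u,G_{\rho_1}^\lambda)$.

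For the combined $L^p$ and $L^q$ energy terms on the right, I invoke the $(p,q)$-intrinsic parabolic Poincar\'e inequality (the first unlabelled lemma in Subsection~4.2) with $\theta=1$:
\[
\miint{G_{\rho_2}^\lambda(z_0)} H_{z_0}\!\left(\frac{|u-\bar u|}{\rho_2}\right) dz \leq c\miint{G_{\rho_2}^\lambda(z_0)} H_{z_0}(|Du|)\, dz,
\]
where $\bar u:=u_{G_{\rho_2}^\lambda(z_0)}$. Combining this with the comparability $a(z)\sim a(z_0)$ on $G_{4\rho}^\lambda(z_0)$ provided by \eqref{cond : p,q-phase condition}$_1$ and with the stopping-time bound \eqref{cond : p,q-phase condition}$_3$ yields the estimate $c\,H_{z_0}(\lambda)$ for the $L^p$ and $L^q$ contribution.

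For the $L^2$ term I apply Lemma~\ref{lem : semi-Parabolic Poincare inequality} with $m=p$ and $\theta=2/p$. Since $q\geq 2$, Jensen's inequality together with \eqref{cond : simple p,q-phase condition} gives $\miint{G_{\rho_2}^\lambda(z_0)}|Du|^2\, dz\leq c\lambda^2$. Moreover, H\"older's inequality combined with \eqref{cond : simple p,q-phase condition} and \eqref{cond : p,q-phase condition}$_1$ yields $\miint{G_{\rho_2}^\lambda(z_0)}(|Du|^{p-1}+a(z)|Du|^{q-1})\, dz\leq c\,H_{z_0}(\lambda)/\lambda$. Using $\ell/\rho_2^2=\lambda^2/H_{z_0}(\lambda)$, both contributions to Lemma~\ref{lem : semi-Parabolic Poincare inequality} are at most $c\lambda^2$, so that $\miint{G_{\rho_2}^\lambda(z_0)}|u-\bar u|^2/\rho_2^2\, dz\leq c\lambda^2$ and the $L^2$ contribution to the Caccioppoli bound is at most $c\,\rho_2^2\,H_{z_0}(\lambda)/(\rho_2^2-\rho_1^2)$.

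Inserting these bounds into the Caccioppoli inequality and dividing by $H_{z_0}(\lambda)/\lambda^2$ produces
\[
S(u,G_{\rho_1}^\lambda)\leq c\!\left(\frac{\rho_2}{\rho_2-\rho_1}\right)^{\!q}\lambda^2,
\]
in which no $S$-term appears on the right; selecting $\rho_1=2\rho$ and $\rho_2=4\rho$ concludes the proof, so that no iteration lemma is needed. The main subtlety, and the reason for the universal dependency $c=c(n,p,q,\nu,L)$ asserted in the statement, is that in the $(p,q)$-intrinsic regime the natural Poincar\'e inequality is formulated in terms of $H_{z_0}$, the coefficient $a(z)$ is universally comparable to $a(z_0)$ via \eqref{cond : p,q-phase condition}$_1$, and no interpolation against $\|u\|_{L^\infty(\Omega_T)}$ or $\|u\|_{C(0,T;L^s(\Omega))}$ is required, in contrast with the $p$-phase analogue treated in Lemma~\ref{lem : estimation of S in p-intrinsic cylinder}.
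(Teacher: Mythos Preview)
The paper does not supply its own proof of this lemma; it is simply stated in Subsection~5.2 and implicitly delegated to \cite{2023_Gradient_Higher_Integrability_for_Degenerate_Parabolic_Double-Phase_Systems}. Your argument is correct and self-contained: the Caccioppoli inequality on $G_{\rho_1}^\lambda\subset G_{\rho_2}^\lambda$, the $(p,q)$-intrinsic Poincar\'e with $\theta=1$ together with the comparability $a(z)\sim a(z_0)$ and \eqref{cond : p,q-phase condition}$_3$ for the $H$-terms, and Lemma~\ref{lem : semi-Parabolic Poincare inequality} with $m=p$, $\theta=2/p$ for the $L^2$ term all combine exactly as you describe, and the resulting constant depends only on $n,p,q,\nu,L$.

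Where your route departs from the template visible in the paper (and presumably in the cited reference) is in the handling of the $L^2$ term. In the $p$-phase analogue, Lemma~\ref{lem : estimation of S in p-intrinsic cylinder}, the paper estimates this term by an interpolation of the form $\miint{}|u-\bar u|^2/\rho_2^2\leq c\lambda\,S(u,Q_{\rho_2}^\lambda)^{1/2}$, which reintroduces $S$ on the right-hand side and forces an application of Young's inequality followed by the iteration Lemma~\ref{lem : a standard iteration lemma}. You instead bound the $L^2$ average outright by $c\lambda^2$ via Lemma~\ref{lem : semi-Parabolic Poincare inequality}, using $\ell/\rho_2^2=\lambda^2/H_{z_0}(\lambda)$ and the estimate $\miint{}(|Du|^{p-1}+a|Du|^{q-1})\leq cH_{z_0}(\lambda)/\lambda$, so that no $S$-term survives on the right and no iteration is required. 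This is a legitimate simplification that is available precisely because in the $(p,q)$-regime the time scale matches $H_{z_0}(\lambda)$ and the gluing term in Lemma~\ref{lem : semi-Parabolic Poincare inequality} collapses to $c\lambda$; the same shortcut is not immediately available in the $p$-phase, which is why the paper resorts to iteration there.
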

\begin{lemma}
    Let $u$ be a weak solution to \eqref{eq : main equation} with \eqref{cond : main assumption with infty} or \eqref{cond : main assumption with s}. Then there exists a constant $c=c(n,p,q)>1$ such that for any $\theta\in(\frac{n}{n+2},1)$,
    $$
    \begin{aligned}
        &\miint{G_{2\rho}^\lambda (z_0)} \left[\frac{\Big|u-u_{G_{2\rho}^\lambda(z_0)}\Big|^p}{(2\rho)^p}+a(z)\frac{\Big|u-u_{G_{2\rho}^\lambda(z_0)}\Big|^q}{(2\rho)^q}\right] dz\\
        &\qquad \leq c\miint{G_{2\rho}^\lambda (z_0)} \left[H_{z_0}^\theta\left(\frac{\Big|u-u_{G_{2\rho}^\lambda(z_0)}\Big|}{2\rho}\right)+H_{z_0}^\theta (|Du|)\right] dz\\
        &\qquad\qquad \times H_{z_0}^{1-\theta}\left(S(u,G_{2\rho}^\lambda (z_0))^\frac{1}{2}\right),
    \end{aligned}
    $$
    whenever $G_{2\kappa \rho}^\lambda (z_0)\subset\Omega_T$ satisfies \eqref{cond : p,q-phase condition}.
\end{lemma}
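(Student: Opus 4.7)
The plan is to apply the Gagliardo--Nirenberg inequality (Lemma \ref{lem : Gagliardo-Nirenberg inequality}) spatial-slice by spatial-slice to the $p$-power and the $q$-power separately, using the $L^2$ supremum $S(u,G_{2\rho}^\lambda(z_0))$ as the ``low exponent'' factor, and then to combine the two resulting estimates by invoking the equivalence $a(z)\approx a(z_0)$ on $G_{4\rho}^\lambda(z_0)$ provided by \eqref{cond : p,q-phase condition}. Throughout I will write $G=G_{2\rho}^\lambda(z_0)$ and $u_G=u_{G_{2\rho}^\lambda(z_0)}$ for brevity.

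First I would handle the $p$-power. At each time slice $t\in J_{2\rho}^\lambda(t_0)$ apply Lemma \ref{lem : Gagliardo-Nirenberg inequality} to $\psi(\cdot)=u(\cdot,t)-u_G$ with parameters $p_1=p$, $p_2=\theta p$, $p_3=2$ and $\vartheta=\theta$. The admissibility condition $-n/p\le \theta(1-n/(\theta p))-(1-\theta)n/2$ reduces to $\theta\ge n/(n+2)$, which is exactly our hypothesis. This yields
$$
\dashint_{B_{2\rho}(x_0)}\!\!\!\frac{|u-u_G|^p}{(2\rho)^p}\,dx
\le c\!\left(\dashint_{B_{2\rho}(x_0)}\!\!\Big[\tfrac{|u-u_G|^{\theta p}}{(2\rho)^{\theta p}}+|Du|^{\theta p}\Big]dx\right)\!\!
\left(\dashint_{B_{2\rho}(x_0)}\!\!\!\frac{|u-u_G|^2}{(2\rho)^2}\,dx\right)^{\!\frac{(1-\theta)p}{2}}\!\!\!\!.
$$
Bounding the last factor by $S(u,G)^{(1-\theta)p/2}$ (a pointwise-in-$t$ bound) and averaging in time gives
$$
\miint{G}\frac{|u-u_G|^p}{(2\rho)^p}\,dz\le c\, S(u,G)^{(1-\theta)p/2}\miint{G}\Big[\tfrac{|u-u_G|^{\theta p}}{(2\rho)^{\theta p}}+|Du|^{\theta p}\Big]\,dz.
$$

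Next I repeat the same argument with $p_1=q$, $p_2=\theta q$, $p_3=2$, $\vartheta=\theta$; the admissibility reduces again to $\theta\ge n/(n+2)$. This produces the analogous estimate with the exponent $(1-\theta)q/2$ on $S(u,G)$. Multiplying both sides by $a(z)$ and using \eqref{cond : p,q-phase condition} to bound $a(z)\le 2a(z_0)$ inside the integral and $a(z_0)\le 2\inf_G a$ to reinsert $a(\cdot)$ if desired (both will be absorbed by the additive form of $H_{z_0}$), I obtain
$$
\miint{G} a(z)\frac{|u-u_G|^q}{(2\rho)^q}\,dz
\le c\, a(z_0)\, S(u,G)^{(1-\theta)q/2}\miint{G}\Big[\tfrac{|u-u_G|^{\theta q}}{(2\rho)^{\theta q}}+|Du|^{\theta q}\Big]\,dz.
$$

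Finally I add the two displays. Using the elementary equivalence, valid for $\theta\in(0,1)$,
$$
H_{z_0}^\theta(\kappa)\simeq \kappa^{\theta p}+a(z_0)^\theta\kappa^{\theta q},\qquad
H_{z_0}^{1-\theta}(S^{1/2})\simeq S^{(1-\theta)p/2}+a(z_0)^{1-\theta}S^{(1-\theta)q/2},
$$
every one of the four cross-terms obtained by expanding the product
$$
\miint{G}\Big[H_{z_0}^\theta\big(\tfrac{|u-u_G|}{2\rho}\big)+H_{z_0}^\theta(|Du|)\Big]dz\cdot H_{z_0}^{1-\theta}\big(S(u,G)^{1/2}\big)
$$
dominates one of the two right-hand sides above (after matching the $a(z_0)$ powers $0+0$, $\theta+(1-\theta)$ for the $p$ and $q$ pieces respectively). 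Assembling these bounds yields the claimed inequality. The argument contains no genuine obstacle; the only care needed is the bookkeeping of the four product terms and the use of $a(z)\simeq a(z_0)$ on $G_{4\rho}^\lambda(z_0)$ to pass between $a(z)$ inside the integral and the constant $a(z_0)$ that appears in $H_{z_0}$.
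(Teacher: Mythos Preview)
Your proposal is correct and is precisely the argument the paper has in mind. In the paper this lemma is stated without proof in the $(p,q)$-phase subsection, the implicit justification being the analogous computation carried out in \cite[Lemma~4.4]{2023_Gradient_Higher_Integrability_for_Degenerate_Parabolic_Double-Phase_Systems} and reproduced in the $p$-phase case (see the proofs of Lemmas~\ref{lem : estimation of difference of u and mean u in p intrinsic cylinder with s<infty} and~\ref{lem : estimation of difference of u and mean u in p intrinsic cylinder with s=infty}, where the same slice-wise Gagliardo--Nirenberg with $p_3=2$, $\vartheta=\theta$ is invoked). Your use of $a(z)\le 2a(z_0)$ from \eqref{cond : p,q-phase condition} to pass from the variable coefficient to the frozen one, together with the equivalence $H_{z_0}^\theta(\kappa)\simeq \kappa^{\theta p}+a(z_0)^\theta\kappa^{\theta q}$, is exactly the bookkeeping needed; the four cross-terms match as you describe.
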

\begin{lemma}\label{lem : reverse Holder inequality for p,q phase}
    Let $u$ be a weak solution to \eqref{eq : main equation} with \eqref{cond : main assumption with infty} or \eqref{cond : main assumption with s}. Then there exist constants $c=c(n,p,q,\nu,L)>1$ and $\theta_0=\theta_0(n,p,q)\in(0,1)$ such that for any $\theta\in(\theta_0,1)$,
    $$
    \miint{G_\rho^\lambda(z_0)} H_{z_0}(|Du|)\, dz\leq c\left(\miint{G_{2\rho}^\lambda(z_0)}H_{z_0}^\theta(|Du|)\, dz\right)^\frac{1}{\theta},
    $$
    whenever $G_{2\kappa \rho}^\lambda(z_0)\subset \Omega_T$ satisfies \eqref{cond : p,q-phase condition}. Moreover, we have
    $$
    \iints{G_{2\kappa \rho}^\lambda (z_0)} H(z,|Du|)\, dz\leq c\Lambda^{1-\theta}\iints{G_{2\rho}^\lambda (z_0)\cap \Psi(c^{-1}\Lambda)} H(z,|Du|)^\theta\, dz.
    $$
\end{lemma}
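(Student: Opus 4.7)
The plan is to mirror the $p$-phase argument (Lemma \ref{lem : reverse Holder inequality for p phase}) in the $(p,q)$-intrinsic geometry, with the frozen scale $H_{z_0}(\lambda)$ playing the role that $\lambda^p$ did previously. Two structural facts will be key throughout: by \eqref{cond : p,q-phase condition}$_1$, $a(z)$ is comparable to $a(z_0)$ on $G_{4\rho}^\lambda(z_0)$, so $H(z,\kappa)$ and $H_{z_0}(\kappa)$ differ only by a universal constant; and by \eqref{cond : p,q-phase condition}$_3$, the integral average of $H(z,|Du|)$ over $G_\rho^\lambda(z_0)$ equals $H_{z_0}(\lambda)\sim\Lambda$.

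First I apply Lemma \ref{lem : Caccioppoli inequality} on the pair $G_\rho^\lambda\subset G_{2\rho}^\lambda$ with time half-widths $\tau=\frac{\lambda^2}{H_{z_0}(\lambda)}\rho^2$ and $\ell=\frac{\lambda^2}{H_{z_0}(\lambda)}(2\rho)^2$; since $\ell-\tau\sim\frac{\lambda^2}{H_{z_0}(\lambda)}\rho^2$, this produces an upper bound for $\miint{G_\rho^\lambda}H(z,|Du|)\,dz$ in terms of the usual $(p,q)$-bracket at scale $2\rho$ plus an $L^2$-term whose prefactor is $H_{z_0}(\lambda)/\lambda^2$. The bracket is estimated by invoking the second lemma of Subsection 4.2: with the a priori bound $S(u,G_{2\rho}^\lambda)\leq c\lambda^2$ its factor $H_{z_0}^{1-\theta}(S^{1/2})$ reduces to $cH_{z_0}(\lambda)^{1-\theta}$, and the $(p,q)$-intrinsic Sobolev--Poincar\'e lemma absorbs the Poincar\'e quotient into $\miint H_{z_0}^\theta(|Du|)\,dz$. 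The $L^2$-term is handled by the standard interpolation used in \cite[Lemma 4.5]{2023_Gradient_Higher_Integrability_for_Degenerate_Parabolic_Double-Phase_Systems}; after multiplying by $H_{z_0}(\lambda)/\lambda^2$ and applying Young's inequality it contributes $\tfrac{1}{2}H_{z_0}(\lambda)$ plus a lower-order term of the form $c(\miint H_{z_0}^\theta(|Du|))^{1/\theta}$. Substituting $\miint{G_\rho^\lambda}H(z,|Du|)\,dz=H_{z_0}(\lambda)$ from \eqref{cond : p,q-phase condition}$_3$, absorbing $\tfrac{1}{2}H_{z_0}(\lambda)$, and dividing by $H_{z_0}(\lambda)^{1-\theta}$ yields the first inequality after the universal swap $H(z,\cdot)\approx H_{z_0}(\cdot)$ on both sides.

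For the super-level version, \eqref{cond : p,q-phase condition}$_2$ applied at $\sigma=2\kappa\rho$ yields $\iints{G_{2\kappa\rho}^\lambda}H(z,|Du|)\,dz\leq c|G_{2\rho}^\lambda|H_{z_0}(\lambda)\sim\Lambda|G_{2\rho}^\lambda|$, so it suffices to bound $\Lambda|G_{2\rho}^\lambda|$ by the right-hand side via the first inequality. Splitting $G_{2\rho}^\lambda$ into $G_{2\rho}^\lambda\cap\Psi(c^{-1}\Lambda)$ and its complement and using $H(z,|Du|)^\theta\leq (c^{-1}\Lambda)^\theta$ pointwise on the complement, the contribution of the complement to $c\Lambda^{1-\theta}\iints H^\theta$ is bounded by $c^{1-\theta}\Lambda|G_{2\rho}^\lambda|$ and can be absorbed once the constant $c$ in $\Psi(c^{-1}\Lambda)$ is taken large enough. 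The main obstacle is calibrating $\theta_0$: the Sobolev--Poincar\'e and interpolation steps require $\theta>\max\{(q-1)/p,\,n/(n+2)\}$, while the final Young absorption forces $\theta$ close enough to $1$ for $H_{z_0}(\lambda)^{1-\theta}$ to be pulled across without inflating constants, all constraints depending only on $n,p,q$ and matching the claimed $\theta_0(n,p,q)$. In contrast with the $p$-phase case, no case split on $s$ is needed here, because \eqref{cond : p,q-phase condition}$_1$ keeps $a(z)$ uniformly positive on $G_{4\rho}^\lambda$ and the $\rho^\alpha$-type correction that required Lemmas \ref{lem : estimation of difference of u and mean u in p intrinsic cylinder with s<infty}--\ref{lem : estimation of difference of u and mean u in p intrinsic cylinder with s=infty} never arises.
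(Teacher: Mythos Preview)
Your proposal is correct and follows precisely the route the paper has in mind. Note that the paper does not actually supply a proof for this lemma: the three lemmas of Subsection~5.2 are stated without argument, the tacit point being that in the $(p,q)$-phase the comparability $a(z)\sim a(z_0)$ from \eqref{cond : p,q-phase condition}$_1$ makes the proof identical to the one in \cite{2023_Gradient_Higher_Integrability_for_Degenerate_Parabolic_Double-Phase_Systems}, with no dependence on the new gap conditions \eqref{cond : main assumption with infty} or \eqref{cond : main assumption with s}. Your sketch spells out exactly that argument: Caccioppoli on $G_\rho^\lambda\subset G_{2\rho}^\lambda$, the Gagliardo--Nirenberg bracket estimate combined with $S\le c\lambda^2$, the $(p,q)$-intrinsic Poincar\'e inequality, the $L^2$-interpolation step, Young's inequality against $H_{z_0}(\lambda)=\Lambda$, and finally the super-level-set splitting. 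One harmless slip: the lemma carrying the factor $H_{z_0}^{1-\theta}(S^{1/2})$ is the second lemma of Subsection~5.2, not 4.2. Also, your remark that Young ``forces $\theta$ close to $1$'' is not quite the mechanism---Young works for any $\theta\in(0,1)$ with a $\theta$-independent constant; the threshold $\theta_0=\max\{(q-1)/p,\,n/(n+2)\}$ comes solely from the Poincar\'e and Gagliardo--Nirenberg steps, which is consistent with the claimed dependence $\theta_0=\theta_0(n,p,q)$.
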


\section{On the proof of our main results}
\subsection{\bf Vitali type covering lemma}\label{subsection 6.1}
To prove a Vitali type covering lemma for the collection of intrinsic cylinders, we write them as
$$
\mathcal{Q}(w)=\left\{
\begin{aligned}
    &Q^{\lambda_w}_{2\varrho_w}(w) &\text{if \eqref{case : p-phase} holds,}\\
    &G^{\lambda_w}_{2\varsigma_w}(w) &\text{if \eqref{case : p,q-phase} holds,}
\end{aligned}\right.
$$
where $\lambda_w,\, \varrho_w$ and $\varsigma_w$ are defined in Section \ref{section 3}. As in \cite[Subsection 5.2]{2023_Gradient_Higher_Integrability_for_Degenerate_Parabolic_Double-Phase_Systems}, we obtain a countable collection $\mathcal{G}$ of pairwise disjoint cylinders in $\mathcal{F}\coloneq \{\mathcal{Q}(w):w\in\Psi(\Lambda,r_1)\}$, where $\mathcal{G}$ satisfies the following two properties:
\begin{itemize}
    \item For each $\mathcal{Q}(z_1)\in\mathcal{F}$, there exists $\mathcal{Q}(z_2)\in\mathcal{G}$ such that 
    \begin{equation}\label{cond : intersect intrinsic cylinder}
        \mathcal{Q}(z_1)\cap\mathcal{Q}(z_2)\neq \emptyset
    \end{equation}
    \item For the above points $z_1$ and $z_2$, we get
    \begin{equation}\label{cond : relationship of ell_z_1 and ell_z_2}
        \ell_{z_1}\leq 2\ell_{z_2},
    \end{equation}
    where 
    $$
    \ell_{z_i}=\left\{
    \begin{aligned}
        &2\varrho_{z_i} &\text{if \eqref{case : p-phase} holds,}\\
        &2\varsigma_{z_i} &\text{if \eqref{case : p,q-phase} holds.}
    \end{aligned}\right.
    $$
\end{itemize}
Now, we claim that for the above points $z_1$ and $z_2$, there holds
\begin{equation}\label{cond : Vitali covering Q(z_1) and Q(z_2)}
    \mathcal{Q}(z_1)\subset \kappa\mathcal{Q}(z_2).
\end{equation}
We prove the claim by dividing it into the following four cases:
\begin{enumerate}[label=(\roman*)]
    \item\label{case : p-phases} $\mathcal{Q}(z_2)=Q_{\ell_{z_2}}^{\lambda_{z_2}}(z_2)\quad$ and $\quad\mathcal{Q}(z_1)=Q_{\ell_{z_1}}^{\lambda_{z_1}}(z_1)$,
    \item\label{case : p,q-phases} $\mathcal{Q}(z_2)=G_{\ell_{z_2}}^{\lambda_{z_2}}(z_2)\quad$ and $\quad\mathcal{Q}(z_1)=G_{\ell_{z_1}}^{\lambda_{z_1}}({z_1})$,
    \item\label{case : p,q-phase and p-phase} $\mathcal{Q}({z_2})=G_{\ell_{z_2}}^{\lambda_{z_2}}({z_2})\quad$ and $\quad\mathcal{Q}({z_1})=Q_{\ell_{z_1}}^{\lambda_{z_1}}({z_1})$,
    \item\label{case : p-phase and p,q-phase} $\mathcal{Q}({z_2})=Q_{\ell_{z_2}}^{\lambda_{z_2}}({z_2})\quad$ and $\quad\mathcal{Q}({z_1})=G_{\ell_{z_1}}^{\lambda_{z_1}}({z_1})$.
\end{enumerate}
According to \cite[Subsection 5.2]{2023_Gradient_Higher_Integrability_for_Degenerate_Parabolic_Double-Phase_Systems}, the claim holds in the spatial part by enlarging the radius of the set with respect to $z_2$ by factor $5$. To prove the claim for the time part, we need a comparison condition between $\lambda_{z_1}$ and $\lambda_{z_2}$. Thus, we show that if $\lambda_{z_1}\leq \lambda_{z_2}$, then
\begin{equation}\label{eq : lambda_z_2 less than lambda_z_1}
    \lambda_{z_2}\leq \left(2\left(1+K\right)\right)^\frac{1}{p}\lambda_{z_1}.
\end{equation}
To obtain a contradiction, suppose that \eqref{eq : lambda_z_2 less than lambda_z_1} is not satisfied. Then we get 
$$
\lambda_{z_1}^p<(1+K)\lambda^p_{z_1}<\frac{1}{2}\lambda_{z_2}^p \quad\text{and}\quad \lambda_{z_1}^q<\frac{1}{4}\lambda_{z_2}^q.
$$
If \eqref{case : p-phase} holds, then we obtain from \eqref{cond : Lambda=H(lambda)} that
$$
\Lambda=\lambda_{z_1}^p+a(z_1)\lambda_{z_1}^q\leq (K+1)\lambda_{z_1}^p<\frac{1}{2}\lambda_{z_2}^p\leq \frac{1}{2}\Lambda,
$$
which is a contradiction. If \eqref{case : p,q-phase} holds, we apply \eqref{cond : p,q-phase condition}$_2$ and \eqref{cond : Lambda=H(lambda)} to have
$$
\Lambda=\lambda_{z_1}^p+a(z_1)\lambda_{z_1}^q<\frac{1}{2}\lambda_{z_2}^p+\frac{1}{2}a(z_2)\lambda_{z_2}^q=\frac{1}{2}\Lambda,
$$
which is a contradiction. Thus, \eqref{eq : lambda_z_2 less than lambda_z_1} holds. Similarly, if $\lambda_{z_2}\leq \lambda_{z_1}$, then
$$
    \lambda_{z_1}\leq \left(2\left(1+K\right)\right)^\frac{1}{p}\lambda_{z_2}.
$$
To summarize, we have
\begin{equation}\label{eq : comparison condition of lambda_z_2 and _z_1}
    (4K)^{-\frac{1}{p}}\lambda_{z_1}\leq\lambda_{z_2}\leq (4K)^\frac{1}{p}\lambda_{z_1}.
\end{equation}

To prove the claim, we write $z_1=(x_{1},t_{1})$ and $z_2=(x_{2},t_{2})$ for $x_{1},x_{2}\in\mr^n$ and $t_{1},t_{2}\in\mr$.

\ref{case : p-phases} For any $\tau\in I^{\lambda_{z_1}}_{\ell_{z_1}}(t_{1})$, by \eqref{cond : relationship of ell_z_1 and ell_z_2} and \eqref{eq : comparison condition of lambda_z_2 and _z_1}, we have
$$
\begin{aligned}
    |\tau-t_{2}|&\leq |\tau-t_{1}|+|t_{1}-t_{2}|\leq 2\lambda_{z_1}^{2-p}\ell_{z_1}^2+\lambda_{z_2}^{2-p}\ell_{z_2}^2\\
    &\leq (32K+1) \lambda_{z_2}^{2-p}\ell_{z_2}^2\leq \lambda_{z_2}^{2-p}(\kappa\ell_{z_2})^2.
\end{aligned}
$$
Thus, we get $I^{\lambda_{z_1}}_{\ell_{z_1}}(t_{1})\subset \kappa I^{\lambda_{z_2}}_{\ell_{z_2}}(t_{2})$ and hence $Q_{\ell_{z_1}}^{\lambda_{z_1}}(z_1)\subset \kappa Q_{\ell_{z_2}}^{\lambda_{z_2}}(z_2)$.

\ref{case : p,q-phases} For any $\tau\in J^{\lambda_{z_1}}_{\ell_{z_1}}(t_{1})$, we have
$$
|\tau-t_{2}|\leq |\tau-t_{1}|+|t_{1}-t_{2}|\leq 2\frac{\lambda_{z_1}^2}{H_{z_1}(\lambda_{z_1})}\ell_{z_1}^2+\frac{\lambda_{z_2}^2}{H_{z_2}(\lambda_{z_2})}\ell_{z_2}^2.
$$
We apply \eqref{cond : Lambda=H(lambda)} and \eqref{eq : comparison condition of lambda_z_2 and _z_1} to have
$$
\frac{\lambda_{z_1}^2}{H_{z_1}(\lambda_{z_1})}=\frac{\lambda_{z_1}^2}{\Lambda}\leq 4K\frac{\lambda_{z_2}^2}{\Lambda}=4K\frac{\lambda_{z_2}^2}{H_{z_2}(\lambda_{z_2})}.
$$
By \eqref{cond : relationship of ell_z_1 and ell_z_2}, we obtain
$$
|\tau-t_{2}|\leq (32K+1)\frac{\lambda_{z_2}^2}{H_{z_2}(\lambda_{z_2})}\ell_{z_2}^2\leq \frac{\lambda_{z_2}^2}{H_{z_2}(\lambda_{z_2})} (\kappa \ell_{z_2})^2. 
$$
Thus, we have $J^{\lambda_{z_1}}_{\ell_{z_1}}(t_{1})\subset \kappa J^{\lambda_{z_2}}_{\ell_{z_2}}(t_{2})$ and hence $G_{\ell_{z_1}}^{\lambda_{z_1}}(z_1)\subset \kappa G_{\ell_{z_2}}^{\lambda_{z_2}}(z_2)$.

\ref{case : p,q-phase and p-phase} For any $\tau \in I^{\lambda_{z_1}}_{\ell_{z_1}}(t_{1})$, applying \eqref{cond : Lambda=H(lambda)}, we have
$$
|\tau-t_{2}|\leq |\tau-t_{1}|+|t_{1}-t_{2}|\leq 2\lambda_{z_1}^{2-p}+\frac{\lambda_{z_2}^2}{H_{z_2}(\lambda_{z_2})}\ell_{z_2}^2=2\lambda_{z_1}^{2-p}\ell_{z_1}^2+\frac{\lambda_{z_2}^2}{\Lambda}\ell_{z_2}^2.
$$
Since $\displaystyle \lambda_{z_1}^p\geq \sup_{Q_{10\varrho_{z_1}}(z_1)} a(\cdot)\lambda_{z_1}^q\geq a(z_1)\lambda_{z_1}^q$, by \eqref{cond : Lambda=H(lambda)} and \eqref{eq : comparison condition of lambda_z_2 and _z_1}, we get
$$
\lambda_{z_1}^{2-p}\leq \frac{2K\lambda_{z_1}^2}{\lambda_{z_1}^p+a(z_1)\lambda_{z_1}^q}\leq 8K^2\frac{\lambda_{z_2}^2}{\Lambda},
$$
and hence we see from \eqref{cond : relationship of ell_z_1 and ell_z_2} that
$$
|\tau-t_{2}|\leq (64K^2+1)\frac{\lambda_{z_2}^2}{\Lambda}\ell_{z_2}^2\leq \frac{\lambda_{z_2}^2}{\lambda_{z_2}^p+a(z_2)\lambda_{z_2}^q}(\kappa\ell_{z_2})^2.
$$
Thus, we get $I^{\lambda_{z_1}}_{\ell_{z_1}}(t_{1})\subset \kappa J^{\lambda_{z_2}}_{\ell_{z_2}}(t_{2})$ and hence $Q_{\ell_{z_1}}^{\lambda_{z_1}}(z_1)\subset \kappa G_{\ell_{z_2}}^{\lambda_{z_2}}(z_2)$.

\ref{case : p-phase and p,q-phase} For any $\tau\in J^{\lambda_{z_1}}_{\ell_{z_1}}(t_{1})$, using \eqref{cond : Lambda=H(lambda)} and \eqref{eq : comparison condition of lambda_z_2 and _z_1}, we obtain
$$
\begin{aligned}
    |\tau-t_{2}|&\leq |\tau-t_{1}|+|t_{1}-t_{2}|\leq 2\frac{\lambda_{z_1}^2}{H_{z_1}(\lambda_{z_1})}\ell_{z_1}^2+\lambda_{z_2}^{2-p}\ell_{z_2}^2\\
    &\leq 2\lambda_{z_1}^{2-p}\ell_{z_1}^2+\lambda_{z_2}^{2-p}\ell_{z_2}^2\leq (32K+1)\lambda_{z_2}^{2-p}\ell_{z_2}^2\\
    &\leq\lambda_{z_2}^{2-p}(\kappa\ell_{z_2})^2.
\end{aligned}
$$
Therefore, $J^{\lambda_{z_1}}_{\ell_{z_1}}(t_{1})\subset \kappa I^{\lambda_{z_2}}_{\ell_{z_2}}(t_{2})$ and $G_{\ell_{z_1}}^{\lambda_{z_1}}(z_1)\subset \kappa Q_{\ell_{z_2}}^{\lambda_{z_2}}(z_2)$. Thus, the claim is proved.

\subsection{\bf Proof of Theorems \ref{thm : main theorem with infty} and \ref{thm : main theorem with s}}\label{subsection 6.2}
Denote intrinsic cylinders in the countable pairwise disjoint collection $\mathcal{G}$ by, for $j=1,2,\cdots,$
$$
\mathcal{Q}_j \equiv \mathcal{Q}_j(z_j),
$$
where $z_j \in \Psi(\Lambda,r_1)$. Applying Lemmas \ref{lem : reverse Holder inequality for p phase} and \ref{lem : reverse Holder inequality for p,q phase}, we deduce that there exists $c=c(\operatorname{data})>1$ such that
$$
\miint{\kappa\mathcal{Q}_j} H(z,|Du|)\, dz\leq c\Lambda^{1-\theta}\miint{\mathcal{Q}_j\cap\Psi(c^{-1}\Lambda)}H(z,|Du|)^\theta\, dz
$$
for any $j \in\mathbb{N}$, where $\theta=\frac{\theta_0+1}{2} \in (0,1)$ with
$$
\theta_0=\begin{cases}
    \theta_0(n,p,q)\qquad & \text{if \eqref{cond : p-phase condition}$_1$ and \eqref{cond : main assumption with infty} hold},\\
    \theta_0(n,p,q,s)\qquad & \text{if \eqref{cond : p-phase condition}$_1$ and \eqref{cond : main assumption with s} hold},\\
    \theta_0(n,p,q)\qquad & \text{if \eqref{cond : p,q-phase condition}$_1$ holds}.\\
\end{cases}
$$
As in \cite[Subsection 5.3]{2023_Gradient_Higher_Integrability_for_Degenerate_Parabolic_Double-Phase_Systems}, by applying the Vitali covering lemma and Fubini's theorem, we obtain that for any $\varepsilon\in(0,\varepsilon_0)$,
\begin{equation}\label{eq : H estimate in Q_r to Q_2r}
    \miint{Q_r(z_0)} [H(z,|Du|)]^{1+\varepsilon}\, dz\leq c\Lambda_0^\varepsilon\miint{Q_{2r}(z_0)} H(z,|Du|)\, dz
\end{equation}
for some $c=c(\operatorname{data})>1$, $\varepsilon_0=\varepsilon_0(\operatorname{data})$ and $\Lambda_0$ is defined in \eqref{def : lambda_0 and Lambda_0}. Since $\lambda_0\geq 1$ and $2\leq p<q$, $\Lambda_0^\varepsilon\leq c \lambda_0^{\varepsilon q}$ for some $c=c(\operatorname{data},\|a\|_{L^\infty(\Omega_T)})$. Then the definition of $\lambda_0$ implies that
\begin{equation}\label{eq : Lambda_0 estimate}
    \Lambda_0^\varepsilon\miint{Q_{2r}(z_0)} H(z,|Du|)\, dz\leq c\left(\miint{Q_{2r}(z_0)} H(z,|Du|)\, dz\right)^{1+\frac{\varepsilon q}{2}}+c.
\end{equation}
Combining \eqref{eq : H estimate in Q_r to Q_2r} and \eqref{eq : Lambda_0 estimate} proves our main theorems. \qquad \qquad \qquad \qquad \qquad \qquad $\Box$

\bibliographystyle{abbrv}
\bibliography{ref}{}
\end{document}